\documentclass[reqno,11pt,a4paper]{amsart}

\usepackage{amsfonts,amsmath,amssymb,amsthm,color}

\usepackage{amssymb}
\usepackage{amsfonts}
\usepackage{amsmath}
\usepackage{amsthm}
\usepackage{nccmath}
\usepackage{mathrsfs}
\usepackage[USenglish]{babel}
\usepackage{esint}
\usepackage{graphicx}
\usepackage[colorlinks=true]{hyperref}
\usepackage{pgfplots}
\usetikzlibrary{patterns}
 \usepackage[hmargin=3cm, vmargin=2.9cm]{geometry}
\hypersetup{
    colorlinks,
    citecolor=red,
    linkcolor=blue,
    urlcolor=black
}

\renewcommand{\(}{\left(}
\renewcommand{\)}{\right)}

\newcommand{\bs}[1]{\boldsymbol{#1}}

\allowdisplaybreaks

\newtheorem{theorem}{Theorem}[section]

\newtheorem{proposition}[theorem]{Proposition}

\newtheorem{remark}[theorem]{Remark}

\numberwithin{equation}{section}

\def\R{{\mathfrak R}}

\def\begeq{\begin{equation}}
\def\endeq{\end{equation}}

\def\R{\Bbb R}

\allowdisplaybreaks

\begin{document}

\title[Segregated solutions for a critical elliptic system]{Segregated solutions for a critical elliptic system   with a small   interspecies repulsive force}


\author{Haixia Chen}
\address{Haixia Chen,
\newline\indent School of Mathematics and Statistics, Central China Normal University
\newline\indent Wuhan 430079, P. R. China.
}
\email{hxchen@mails.ccnu.edu.cn}

\author{Maria Medina}
\address{Maria Medina,
\newline\indent Departamento de Matem\'aticas, Universidad Aut\'onoma de Madrid, Ciudad Universitaria de Cantoblanco  
\newline\indent  28049 Madrid, Spain
}
\email{maria.medina@uam.es}

\author{Angela Pistoia}
\address{Angela Pistoia, 
\newline\indent  Dipartimento di Scienze di Base e Applicate per l’Ingegneria, Sapienza Universita di Roma
\newline\indent  Via Scarpa 16, 00161 Roma, Italy}
\email{angela.pistoia@uniroma1.it}

\begin{abstract}
We consider the elliptic system
$$-\Delta u_i   = u_i^3+\sum\limits_{j=1\atop j\not=i}^{q+1}{ \beta_{ij}}u_i u_j^2\   \hbox{in}\ \mathbb R^4, \ i=1,\dots,q+1.$$
when  
$\alpha:=\beta_{ij}$ and $\beta:=\beta_{i(q+1)}=\beta_{(q+1)j}$ for any $i,j=1,\dots,q.$
If $\beta<0$ and $|\beta|$ is small enough we build solutions  such that  each component $u_{1},\dots,u_q$ blows-up at the vertices of $q$ polygons placed in different {\em great circles}
which are linked to each other, and the last component $u_{q+1}$ looks like the radial positive solution of the single equation.

\end{abstract}

\date\today
\subjclass[2010]{35J47 (primary),  35B33 (secondary)}
\keywords{elliptic systems, critical growth, segregated solutions, blow-up solutions}
 \thanks{ 
H. Chen  has been partially supported by the NSFC grants (No.12071169) and the China Scholarship Council (No. 202006770017). M.~Medina  has been supported by Project PDI2019-110712GB-100, MICINN, Spain, and by Project SI3-PJI-2021-00324, cofunded by Universidad Aut\'onoma de Madrid and Comunidad de Madrid. A. Pistoia has been partially supported by INDAM-GNAMPA project.}

\maketitle

\section{Introduction and main results}
This article is devoted to study the  critical system
\begin{equation}\label{sys0}
-\Delta u_i   = \sum\limits_{j=1}^m{ \beta_{ij}}u_i u_j^2\   \hbox{in}\ \mathbb R^4,\ u_i>0, \ i=1,\dots,m
\end{equation}
where the $\beta_{ij}$'s are real parameters. 
This kind of system   arises from many physical models, for instance in nonlinear optics (e.g. \cite{AA,MCSS,MS}) and in Bose-Einstein condensates for multi-species condensates (e.g. \cite{RCF,TB}). From the physical point of view, $\mu_i:=\beta_{ii}$
  describes  the interaction between particles of the  
component $u_i$, supposed to be positive, while $\beta_{ij}$, $i\not=j,$ describes the interspecies force  between particles   of  the different components $u_i$ and $u_j$, which can be {\em attractive} if $\beta_{ij}>0$ or {\em repulsive} if
  $\beta_{ij}<0$.  
  \\
  
  A more general version of  \eqref{sys0} 
  is the critical system
\begin{equation}\label{sysn}
-\Delta u_i = \sum\limits_{j=1}^m \beta_{ij}u_i ^{\frac{2^*}{2} -1} u_j^{\frac{2^*}{2}}\   \hbox{in}\ \mathbb R^n,\ u_i>0,\ i=1,\dots,m
\end{equation}
where $n\geq3$ and $2^{\ast}:=\frac{2n}{n-2}$ is the critical Sobolev exponent. When $m=1$ system \eqref{sysn} is reduced to the single equation
\begin{equation}\label{equn} -\Delta u= u^{2^*-1}\ \hbox{in}\ \mathbb R^n.\end{equation}
It is well known that all its positive solutions are the so-called {\em bubbles}
 \begin{equation}\label{bubbles}U_{\delta,\xi}(x):= \delta^{-\frac{n-2}{2}}U(\frac{x-\xi}{\delta}),\quad  U(x):=\frac{c_n}{(1+|x|^2)^{\frac{n-2}{2}}},\;c_n>0,\quad x\in\R^n,
 \end{equation} 
 where $\delta>0$ and $\xi\in\mathbb R^n.$ 
 It is useful to remind that \eqref{equn} has also a wide number of changing-sign solutions.
 The first result is due to Ding  \cite{d} who   established the existence of infinitely many
sign-changing solutions for \eqref{equn} which are invariant under the action of a suitable 
 group of conformal transformations of $\mathbb{R}^n$. More recently, Del Pino, Musso, Pacard and Pistoia \cite{dPMPP,dPMPP2}
  constructed sequences of sign-changing solutions with large energy and concentrating along some special
submanifolds of $\mathbb R^n$. In particular, for $n\ge4$ they obtained sequences of solutions
whose energy concentrates along one {\em great circle} or finitely many great circles
which are linked to each other (and they correspond to Hopf links embedded in
$\mathbb S^3\times\{0\}\subset\mathbb S^n$). \\

Concerning system \eqref{sysn}, first of all we observe that it can  have {\em semi-trivial} solutions, i.e. one or more components $u_i$ identically vanish. In this case \eqref{sysn} can be reduced to a system with a less number of components. For example,  if $u$ solves the single equation \eqref{equn}, then
 $u_1=\mu_{1}^{\frac{2-n}{4}}u,$ $ u_2=\dots=u_m=0$  is a semi-trivial  solution to \eqref{sysn}.  
It is also useful to point out that it can have {\em synchronized} solutions, i.e.  the components $u_i=s_i u$ with $s_i>0$ and $u$ solves the single equation \eqref{equn}. In this case system \eqref{sysn} is reduced to an algebraic system
$$s_i=\sum\limits_{j=1}^m \beta_{ij}s_i ^{{2^*\over2} -1}s_j^{{2^*\over2}},\ s_i>0\  i=1,\dots,m.$$
Clearly, one is interested in finding
 \emph{fully nontrivial} (and non-synchronized)
solutions, i.e. when all the components are not identically zero. As far as we know the first result concerning system \eqref{sysn} with only two equations is due to Chen and Zou \cite{cz1,cz2}, who  established the
existence of a positive least energy fully nontrivial solution  in an attractive regime, i.e.  for any $\beta:=\beta_{12}=\beta_{21}>0$ if
$n\geq5$ and for a wide range of $\beta>0$ if $n=4.$ Peng, Peng and Wang \cite{ppw} studied the system for $\mu_{1}=\mu_{2}=1,$
$\beta=1$ and  obtained uniqueness and non-degeneracy results for positive synchronized solutions. Actually, Druet and Hebey  \cite{dh}   proved that  all the positive solutions to \eqref{sysn} in dimension $n=4$ when all the $\beta_{ij}$'s are equal to 1  have to be  synchronized.  In  \cite{ggt1, ggt2} Gladiali, Grossi and Troestler obtained radial and nonradial solutions to \eqref{sysn}  using bifurcation methods.\\

In presence of a  repulsive regime, there is a strong connection between positive solutions to the system \eqref{sysn} and changing-sign solutions to the single equation \eqref{equn}, as pointed out in a series of papers by Terracini and her collaborators (see \cite{11,29,37,38,39}) because a complete {\em segregation phenomena} between components occurs as the interaction forces $\beta_{ij}\to-\infty$. So it is natural to ask if whenever
there exists a changing-sign solution to the single equation \eqref{equn}  it is   possible to find a positive solution to the system \eqref{sysn} whose components resemble its positive and   negative part. 

This is true  when we consider the changing-sign solutions found by Ding \cite{d}. Indeed 
Clapp and Pistoia \cite{cp} (when $m=2$) and Clapp and Szulkin \cite{cs} (when $m\ge3$), using a similar variational approach, found   fully 
non-trivial  positive solutions to \eqref{sysn}  in a fully repulsive regime, i.e.   $\beta_{ij}<0$. This is also true when we consider the changing-sign solutions found by Del Pino, Musso, Pacard and Pistoia \cite{dPMPP} in dimension $n=3.$ 
Indeed Guo, Li and Wei  \cite{glw}, using a Ljapunov-Schmidt reduction procedure,  built solutions to system \eqref{sysn} (with $m=2$) when $\beta:=\beta_{12}=\beta_{21}<0,$ whose  first component looks like the radial positive solution $U$ in \eqref{bubbles} and the second component resembles the sum of $k$ negative bubbles $U_{\delta,\xi_i}$ whose peaks $\xi_i$ are arranged on a regular polygon with radius around 1. 

Now, since in \cite{dPMPP,dPMPP2} the authors built a wide number of sign-changing solutions with different profiles in dimension $n\ge4$, it is natural to  ask if there is also a connection  among them and positive solutions to the system \eqref{sysn} with $m\ge2$ and $n\ge4.$
This is not surely possible using Ljapunov-Schmidt-type techniques if $n\ge5$, since the coupling term $u_i^{{2^*\over2} -1}u_j^{{2^*\over2}}$ has 
  sub-linear growth (i.e. ${2^*\over2} -1<1$ if $n\ge5$) and the reduction process does not work. 
In the present paper, we will focus on the case $n=4$ where the coupling term $u_iu_j^2$ has linear growth and we prove the existence of an arbitrary large number of {\em segregated} solutions whose components resemble the profile of the solution found in  \cite{dPMPP,dPMPP2} in the presence of  a small interspecies repulsive force. \\
  
  From now on, we will consider the system \eqref{sys0} with $\mu_i=1$
\begin{equation}\label{sys03}
-\Delta u_i=u_i^3+\sum\limits_{j\neq i}{ \beta_{ij}}u_i u_j^2\quad  \hbox{in}\ \mathbb R^4,\ i=1,\dots,m,
\end{equation}
where the $\beta_{ij}$'s satisfy
\begin{equation}\label{beta}\left\{
\begin{aligned}&\beta_{ij}=\beta_{\kappa\ell}\ \hbox{for any}\ i,j,\kappa,\ell=1,\dots,m-1,\ i\not=j\ \hbox{and}\ \kappa\not=\ell\\
&\beta_{im}=\beta_{mj}=:\beta<0\ \hbox{if}\ i,j=1,\dots,m-1.\end{aligned}\right.
\end{equation}
If $m=2$ assumption \eqref{beta} reduces to $\beta_{12}=\beta_{21}=:\beta<0.$

We will construct the solutions of \eqref{sys03} by means of a Ljapunov-Schmidt reduction method. Roughly speaking, for every component we will find $u_i$ given as a small perturbation of some explicit function $u_i^*$, let us say
\begin{equation}\label{approx}
u_i=u_i^*+\phi_i,\quad i=1,\ldots,m,
\end{equation}
so the first step to solve the system will be choosing appropriate approximations $u_i^*$. Let us introduce how we will face it. Given a parameter $\delta>0$ and a point $\xi\in \R^4$, consider the functions
 \begin{equation}\label{bubble}U_{\delta,\xi}(x):=\frac1\delta U\(x-\xi\over\delta\),\quad  U(x):={c_4\over1+|x|^2},\;c_4:=2\sqrt2,\quad x\in\R^4,
 \end{equation} 
 which are just \eqref{bubbles} in the case $n=4$. Hence $U$ and $U_{\delta,\xi}$, for every $\delta>0$, $\xi\in\R^4$, conform all the possible positive solutions to the single critical equation
 \begin{equation}\label{e}
 -\Delta U =U^3\ \hbox{in}\ \mathbb R^4.
 \end{equation}
The idea will be to construct our approximation by {\it gluing} an arbitrarily large number of copies of these functions, centered at certain precise points of $\R^4$. We will analyze the solvability of \eqref{sys03} in two phases, first the two components case and then the general case $m\geq 3$.

Let us start by assuming $m=2$. Inspired by \cite{dPMPP}, we will use the function $U$ as approximation for the first component, and for the second we will choose 
$$u_1^*(x):=\sum_{i=1}^k U_{\delta,\xi_i}(x),$$
where $k\geq 2$, $\delta>0$, and the points $\xi_i$ are arranged at the vertices of a regular polygon. More precisely,
 \begin{equation}\label{bas03intro}
\xi_i:=\rho\(\cos\frac{2\pi(i-1)}k,\sin\frac{2\pi(i-1)}k ,0,0\right),\quad  \delta^2+\rho^2=1,\quad  i=1,...,k.
\end{equation}
Notice that, due to the non-linearity of the equation, $u_2^*$ is not a solution of \eqref{e}. The precise choice of the location of the points $\xi_i$ allows us to prove that the approximation keeps several symmetries (see \eqref{sim1}-\eqref{kel}) which will be crucial in the solvability theory that we will develop to find the perturbations $\phi_i$, according to the notation in \eqref{approx}  (see Proposition \ref{prop1}). Given $\phi \in\mathcal{D}^{1,2}(\R^4)$, we consider the norm
$ \|\phi\|:=\left(\int_{\R^4}|\nabla \phi|^2\right)^{1/2}.$
We will prove the following result:
 
\begin{theorem}\label{thm01}
Let $m=2$ and assume \eqref{beta}. For every fixed $k\geq 2$, there exists $\beta_0<0$ such that, for each $\beta\in [\beta_0,0)$, the system \eqref{sys03} has a solution $\bs u=(u_1, u_2)$ with the form
\begin{equation*}
u_1=\sum_{i=1}^k U_{\delta,\xi_i}+\psi,\quad u_2=U+\phi, \quad   \text{   in   }  \mathbb{R}^{4},
\end{equation*}
with $U, U_{\delta,\xi_i}$ defined in  \eqref{bubble}, $\xi_i$ in \eqref{bas03intro}, and  $\psi,\phi\in \mathcal{D}^{1,2}(\R^4)$. Furthermore,
$$\|\psi\|,\|\phi\|\to 0, \quad \delta=\mathcal O\bigg(e^{-c\frac{1}{| \beta|}}\bigg),\ \hbox{for some constant}\ c>0,$$
 as $|\beta|\to 0$.
\end{theorem}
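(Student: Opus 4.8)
The plan is to produce $(u_1,u_2)$ by a Lyapunov--Schmidt reduction carried out in the closed subspace $H\subset\mathcal D^{1,2}(\R^4)\times\mathcal D^{1,2}(\R^4)$ of pairs invariant under the symmetry group generated by the dihedral action permuting the vertices $\xi_i$ of \eqref{bas03intro} in the $(x_1,x_2)$--plane, the natural action of $O(2)$ on the $(x_3,x_4)$--plane, and the Kelvin transform $v(x)\mapsto|x|^{-2}v(x/|x|^2)$. A short computation shows that the constraint $\delta^2+\rho^2=1$ is precisely what makes each bubble $U_{\delta,\xi_i}$ — and hence $u_1^*:=\sum_{i=1}^kU_{\delta,\xi_i}$ — invariant under the Kelvin transform; since $U$ is radial and Kelvin--invariant, the pair $(u_1^*,U)$ belongs to $H$ for every admissible $\delta$. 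Working in $H$ annihilates all the translation modes of the $U_{\delta,\xi_i}$ as well as the dilation mode $\partial_\sigma U_{\sigma,0}|_{\sigma=1}$ of the central bubble (the last one is odd under the Kelvin transform), so the only non--trivial element of the approximate kernel of the operator obtained by linearising \eqref{sys03} at $(u_1^*,U)$ is the collective dilation $Z_\delta:=\partial_\delta\big(\sum_{i=1}^kU_{\delta,\xi_i(\delta)}\big)$ — the infinitesimal rotation of the polygon inside the $(x_1,x_2)$--plane is also an approximate kernel element, but its component in the reduced map vanishes identically by rotational equivariance, so the polygon orientation is simply fixed. Thus the reduced problem is one--dimensional, governed by $\delta$. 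Note that $u_2^*:=U$ solves \eqref{e} exactly, whereas $u_1^*$ solves it up to the error $\sum_iU_{\delta,\xi_i}^3-\big(\sum_iU_{\delta,\xi_i}\big)^3$, of size $\mathcal O(\delta^2)$ (up to a logarithmic factor, and small because the vertices stay at mutual distance $\sim1$ as $\delta\to0$), while the coupling terms $\beta u_iu_j^2$ have size $\mathcal O\big(|\beta|\,\delta\,|\log\delta|^{1/2}\big)$.

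First I would invoke Proposition \ref{prop1}: for $\delta\in(0,\delta_0)$ and $|\beta|$ small the system \eqref{sys03}, with both right--hand sides projected onto the orthogonal complement of $\mathrm{span}\,Z_\delta$ in $H$, admits a unique small solution $(\psi_{\delta,\beta},\phi_{\delta,\beta})$, depending in a $C^1$ way on $\delta$, with $\|\psi_{\delta,\beta}\|+\|\phi_{\delta,\beta}\|\to0$ as $(\delta,|\beta|)\to(0,0)$ and with quantitative bounds inherited from the error of $u_1^*$ and from the coupling size; positivity of both components follows from the pointwise (weighted) smallness of $\psi_{\delta,\beta},\phi_{\delta,\beta}$ relative to $u_1^*$ and $U$ near the concentration points. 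Since the reduction preserves the variational structure of the problem, the pair $\big(u_1^*+\psi_{\delta,\beta},\,U+\phi_{\delta,\beta}\big)$ solves \eqref{sys03} exactly if and only if $\delta$ is a critical point of the reduced energy $\mathcal J(\delta,\beta):=\mathcal E\big(u_1^*+\psi_{\delta,\beta},\,U+\phi_{\delta,\beta}\big)$, where $\mathcal E(u_1,u_2)=\tfrac12\int_{\R^4}(|\nabla u_1|^2+|\nabla u_2|^2)-\tfrac14\int_{\R^4}(u_1^4+u_2^4)-\tfrac{\beta}{2}\int_{\R^4}u_1^2u_2^2$ is the energy functional attached to \eqref{sys03}.

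The heart of the argument is then the asymptotic expansion of $\mathcal J$. I expect to prove that, uniformly for $\delta\in(0,\delta_0)$ and $|\beta|$ small,
\begin{equation*}
\mathcal J(\delta,\beta)=(k+1)\,a_0-\tfrac12\,D_k\,\delta^2+\tfrac12\,C_k\,|\beta|\,\delta^2\,|\log\delta|+o\!\left(\delta^2+|\beta|\,\delta^2|\log\delta|\right),
\end{equation*}
where $a_0=\tfrac14\int_{\R^4}U^4$, $D_k\sim\big(\int_{\R^4}U^3\big)\sum_{i\neq j}|\xi_i-\xi_j|^{-2}>0$ is the self--interaction of the $k$ equal--sign bubbles of the polygon (an energy--lowering term, which by itself would drive $\delta$ to grow), and $C_k>0$ measures the interaction $\int_{\R^4}(u_1^*)^2U^2$ between the polygon and the central bubble, the factor $|\log\delta|$ reflecting the borderline non--integrability of $U^2$ in dimension four (an energy--raising term, since $\beta<0$, which by itself would drive $\delta$ to shrink); the contributions of $\psi_{\delta,\beta},\phi_{\delta,\beta}$ and of all remaining cross terms are absorbed in the remainder. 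These two competing terms are of the same order only when $|\log\delta|\sim D_k/(C_k|\beta|)$; indeed $\partial_\delta\mathcal J(\delta,\beta)=\delta\big(-D_k+C_k|\beta|\,|\log\delta|+o(1)\big)$ is positive at $\delta=\delta_1:=e^{-2D_k/(C_k|\beta|)}$ and negative at $\delta=\delta_2:=e^{-D_k/(2C_k|\beta|)}$, so $\mathcal J(\cdot,\beta)$ attains an interior maximum at some $\delta_\beta\in[\delta_1,\delta_2]$, which is a critical point with $\partial_\delta^2\mathcal J(\delta_\beta,\beta)\sim-C_k|\beta|<0$. This $\delta_\beta$ provides the solution in the statement: $\|\psi_{\delta_\beta,\beta}\|,\|\phi_{\delta_\beta,\beta}\|\to0$ and $\delta_\beta\le\delta_2=e^{-c/|\beta|}$ with $c:=D_k/(2C_k)>0$, as $|\beta|\to0$.

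The main obstacle is precisely this energy expansion. One has to compute the polygon self--interaction and the polygon/central--bubble interaction to their leading orders ($\delta^2$ and $\delta^2|\log\delta|$, respectively), and — more delicately — to show that \emph{everything else} (the correction produced by $\psi_{\delta,\beta}$ and $\phi_{\delta,\beta}$, the higher bubble--bubble cross terms $\int_{\R^4}U_{\delta,\xi_i}^2U_{\delta,\xi_j}^2\sim\delta^4|\log\delta|$, and the nonlinear remainders) is $o\big(\delta^2+|\beta|\delta^2|\log\delta|\big)$ uniformly over the relevant window of $\delta$ and $\beta$, together with the analogous control on the $\delta$--derivatives. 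Only this separation of scales keeps the competition between the ``attractive'' polygon term and the ``repulsive'' coupling term effective, and it is exactly the dimension--four logarithm in the coupling that forces the balancing $\delta$ to be exponentially small in $1/|\beta|$ rather than a power of $|\beta|$.
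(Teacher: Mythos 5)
Your overall scheme is correct and tracks the paper's closely — same symmetric function space (dihedral symmetry in $(x_1,x_2)$, evenness/$O(2)$-type symmetry in the remaining variables, and Kelvin invariance tied to the constraint $\delta^2+\rho^2=1$, which as you observe makes $\sum_iU_{\delta,\xi_i}$ Kelvin--invariant and kills all translation modes and the diagonal-dilation mode $Z^0$, leaving only the collective dilation $Z$ as the one--dimensional kernel), same linear invertibility input, and the same two leading competing terms in the reduction (polygon self--interaction of order $\delta^2$ against polygon/$U$ coupling of order $|\beta|\delta^2|\log\delta|$, with the dimension--four logarithm forcing $\delta\sim e^{-c/|\beta|}$). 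The one genuine difference is the last step: you finish via a \emph{variational} reduction, expanding the reduced energy $\mathcal J(\delta,\beta)$ and locating an interior maximum by a sign change of $\partial_\delta\mathcal J$, whereas the paper never invokes the energy at all — it writes the residual along $Z$ as an explicit coefficient $c_0(\delta)$ (formula \eqref{coo}), expands it as $c_0(\delta)=-\mathfrak a\delta^2(1+o(1))+\mathfrak b\,\beta\,\delta^2\ln\delta\,(1+o(1))$ with $\mathfrak a,\mathfrak b>0$ (Proposition \ref{c0-esti}), and solves $c_0(\delta)=0$ directly. The two are equivalent up to the standard fact that, for a variational Lyapunov--Schmidt scheme, $\partial_\delta\mathcal J$ is a nonzero multiple of $c_0(\delta)$ to leading order; the paper's route avoids having to set up the reduction variationally and to establish $C^1$ dependence of $(\psi,\phi)$ on $\delta$, while yours gives for free the qualitative information that the critical point is a nondegenerate maximum. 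Your sign bookkeeping matches the paper's ($-\mathfrak a\leftrightarrow -D_k$ energy--lowering self--interaction; $\mathfrak b\beta\ln\delta>0\leftrightarrow +C_k|\beta||\log\delta|$ energy--raising coupling since $\beta<0$). Two small imprecisions that do not affect the argument: the coupling error in $L^{4/3}$ is $\mathcal O(|\beta|\delta)$ with no $|\log\delta|^{1/2}$ factor (the log only appears in the \emph{energy} of the coupling, not in the $L^{4/3}$ norm of the residual, cf.\ Proposition \ref{prop2}), and the self--interaction residual is $\mathcal O(\delta^2)$ without a logarithmic correction (cf.\ \eqref{err03}).
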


The case $m\geq 3$ is much more involved. Here one polygon is not enough since we need $m$ different approximations. We do it following an idea of \cite{dPMPP2}, where a sign-changing solution to \eqref{e} is found setting copies of $U_{\delta,\xi}$ at the vertices of $q$ polygons placed in different great circles, with $q$ an arbitrary number. We will use each of these polygons as approximation for each component. More precisely, we will consider the functions
\begin{equation}\label{approx3}
u_r^*(x)=\sum\limits_{\ell=1}^kU_{\delta, \tilde \xi_\ell^r}(x),\quad  r=1,\dots, m-1, \quad u_{m}^*(x)=U(x),\end{equation}
where, denoting $q:=m-1$,
\begin{equation}\label{tildexi}
\begin{aligned}\tilde \xi_\ell^r:=\frac{\rho}{\sqrt2} \Big(&\cos\Big(-\frac{(r-1)\pi}q+ \frac{2\pi(\ell-1)} k\Big) ,\sin\Big(-\frac{(r-1)\pi}q+ \frac{2\pi(\ell-1)} k\Big),\\ &\cos\Big(\frac{(r-1)\pi}q+ \frac{2\pi(\ell-1)} k\Big) ,\sin\Big(\frac{(r-1)\pi}q+ \frac{2\pi(\ell-1)} k\Big)\Big),\end{aligned}\end{equation}
and $\delta^2+\rho^2=1$. Notice that, defining the linear transformation 
$$ {\mathscr T}_\theta:=\(\begin{matrix}\cos\theta&-\sin\theta&0&0\\
\sin\theta&\cos\theta&0&0\\
0&0&\cos\theta&\sin\theta\\
0&0&-\sin\theta&\cos\theta \\
 \end{matrix}\),$$
 and considering the initial configuration
 \begin{equation*}  \tilde\xi_\ell:= \frac{\rho}{\sqrt2}\Big( \cos\frac{2\pi(\ell-1)} k, \sin\frac{2\pi(\ell-1)} k, \cos\frac{2\pi(\ell-1)} k, \sin\frac{2\pi(\ell-1)} k\Big),\quad \ell=1,\dots,k,
\end{equation*}
then we have that $\tilde \xi_\ell^r={\mathscr T}^{-1}_{\frac{(r-1)\pi}{q}}\tilde\xi_\ell$ and therefore
$$u_r^*(x)=u_{r-1}^*({\mathscr T}_{\frac{\pi}{q}}x),\quad r=2,\ldots,m-1.$$
This subtle symmetry in the construction will allow us, under some restrictions on the coefficients $\beta_{ij}$, to reduce the general system to a new one with only two equations. Thus the first step in the case $m\geq 3$ will be to exploit the structure of \eqref{approx3} to reduce the system to \eqref{2222}, and the second to solve the new system. At first sight this may look very similar to \eqref{sys03} with $m=2$. However, the new terms that appear from the interaction between the $m$ components do not inherit the symmetries \eqref{sim1}-\eqref{kel} required to prove Theorem \ref{thm01}, so a new analysis is needed. A fundamental issue here will be to identify the special invariances of the construction, much less intuitive (see \eqref{24}-\eqref{e63}), and the functional spaces where we will be able to develop the appropriate linear theory (see Proposition \ref{prop4.1}).

\begin{theorem}\label{thm1.3}
Let $m\geq 3$ and assume \eqref{beta}.
For every fixed $k\geq 2$ even, there exists $\beta_0<0$ such that, for each $\beta\in [\beta_0,0)$, the system \eqref{sys03} has a solution $\bs u=(u_1,..., u_m)$ with the form
\begin{equation*}
u_r=\sum\limits_{\ell=1}^kU_{\delta, \tilde \xi_\ell^r}+\psi_r,\quad  r=1,\dots, m-1, \quad u_{m}=U+\phi, \text{~   in~   }  \mathbb{R}^{4},
\end{equation*}
with $U, U_{\delta,\tilde \xi_\ell^r}$ defined in  \eqref{bubble}, $\tilde \xi_\ell^r$ in \eqref{tildexi}, and  $\psi_r,\phi\in \mathcal{D}^{1,2}(\R^4)$. Furthermore, for every $r=1,\ldots, m-1$,
$$\|\psi_r\|,\|\phi\|\to 0, \quad \delta=\mathcal O\bigg(e^{-c\frac{1}{| \beta|}}\bigg),\ \hbox{for some constant}\ c>0,$$
 as $|\beta|\to 0$.
\end{theorem}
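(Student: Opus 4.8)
The strategy is to reduce the $m$-component system to a two-component system in the spirit of Theorem \ref{thm01}, exploiting the rotational symmetry $u_r^*(x)=u_{r-1}^*(\mathscr T_{\pi/q}x)$ of the approximations \eqref{approx3}--\eqref{tildexi}. First I would set $u_r=u_r^*+\psi_r$ for $r=1,\dots,m-1$ and $u_m=U+\phi$, and seek the perturbations in a space of functions invariant under $\mathscr T_{\pi/q}$ in the sense that $\psi_r(x)=\psi_1(\mathscr T_{(r-1)\pi/q}x)$, together with the discrete symmetries (dihedral action on the polygon of $k$ vertices, reflections) that make the approximation admissible. Under hypothesis \eqref{beta} — all $\beta_{ij}$ equal to a fixed $\alpha$ for $i,j\le q$, and $\beta_{im}=\beta_{mj}=\beta$ — substituting this ansatz into \eqref{sys03} collapses the $q$ equations for $u_1,\dots,u_q$ into a single equation (the one for $\psi_1$, all others being obtained by rotation), coupled with the equation for $\phi$; this is the system \eqref{2222}. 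I would then apply a Ljapunov--Schmidt reduction exactly as in the proof of Theorem \ref{thm01}: solve the auxiliary (infinite-dimensional, orthogonal-to-kernel) problem for $(\psi_1,\phi)$ by a contraction argument in the invariant space, obtaining unique small solutions with $\|\psi_1\|,\|\phi\|\to0$, and then solve the finite-dimensional reduced equation for the parameter $\delta$, which forces $\delta=\mathcal O(e^{-c/|\beta|})$.

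Carrying this out requires several ingredients. First, the symmetry bookkeeping: one must check that the invariances \eqref{24}--\eqref{e63} are genuinely preserved by the nonlinear map defining the system — in particular that the new interaction terms $\sum_{j\ne i,\,j\ne m}\alpha\, u_i u_j^2$, i.e. the inter-polygon couplings among $u_1,\dots,u_q$, respect the $\mathscr T_{\pi/q}$-equivariance and the dihedral symmetry, so that the reduced system really is well-posed in the invariant space. Second, the linear theory: one needs the linearized operator at $(\sum_\ell U_{\delta,\tilde\xi_\ell^1},U)$, restricted to the invariant space and to the orthogonal complement of the (symmetry-reduced) kernel spanned by the $\partial_\delta U_{\delta,\cdot}$ directions, to be invertible with norm bounded independently of $\delta$ (Proposition \ref{prop4.1}); this is where the requirement $k$ \emph{even} enters, as parity of $k$ is needed for the reflection symmetries linking the two pairs of coordinates in $\mathscr T_\theta$ to act consistently on the polygon. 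Third, the estimates of the error term $\mathcal R:=\Delta u^*+ (u^*)^3+(\text{couplings})$ and of the coupling contributions: here one must control the interaction integrals $\int U_{\delta,\tilde\xi_\ell^r}^2\, U_{\delta,\tilde\xi_{\ell'}^{r'}}^2$ between bubbles on \emph{different} great circles, whose mutual distance is of order $\rho$ and hence of order $1$, so these terms are of polynomial order in $\delta$ and are dominated by — or balanced against — the $\beta$-coupling with $U$.

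\textbf{Main obstacle.} The delicate point, as the authors themselves signal, is that the inter-polygon terms do not inherit the clean symmetries \eqref{sim1}--\eqref{kel} used for $m=2$, so the reduced two-equation system \eqref{2222} is \emph{not} simply a copy of \eqref{sys03} with $m=2$: the function spaces and the orthogonality conditions in the Ljapunov--Schmidt scheme must be re-engineered around the less transparent invariances \eqref{24}--\eqref{e63}. Concretely, I expect the hardest step to be establishing the uniform invertibility of the linearized operator in these new invariant spaces (Proposition \ref{prop4.1}): one must show that no new approximate kernel elements are created by the presence of the extra bubbles on the other $q-1$ great circles, which requires a careful blow-up/Fourier analysis of the bubble clusters and exploiting both the dihedral symmetry (to kill the translation and dilation directions transverse to $\partial_\delta$) and the rotational equivariance (to identify eigenfunctions across the $q$ polygons). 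Once this linear estimate is in place, the fixed-point argument for the perturbations and the solution of the one-dimensional reduced equation for $\delta$ follow the same lines as in Theorem \ref{thm01}, yielding the stated exponential smallness $\delta=\mathcal O(e^{-c/|\beta|})$.
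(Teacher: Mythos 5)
Your plan follows the paper's proof essentially step by step: reduce the $(q+1)$-component system to the two-equation system \eqref{2222} via the $\mathscr T_{\pi/q}$-equivariance of the approximation, set up the Ljapunov--Schmidt scheme in the invariant spaces $\tilde X_1,\tilde X_2$ built from \eqref{24}--\eqref{e63} and Kelvin invariance, prove uniform invertibility (Proposition \ref{prop4.1}) with $k$ even needed so that $\tilde V$ is even, estimate the error and inter-polygon couplings using that distinct great circles sit at $O(1)$ distance (cf.\ \eqref{dist}), solve the projected nonlinear problem by contraction, and finally close the reduced scalar equation for $\delta$, yielding $\delta=\mathcal O(e^{-c/|\beta|})$. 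This is the same decomposition, the same key lemmas, and the same role for the symmetries as in the paper, so nothing further needs to be compared.
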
 
\begin{remark}
Regarding the positivity, it is natural to think that in Theorems \ref{thm01} and \ref{thm1.3} we have a positive solution without any additional assumptions on $\beta_{ij}$. Indeed, any component $u_{i}$ is a superposition of positive function and small perturbation term. If the positive part of $\beta_{ij}$ is small for every $i \neq j$, then a short rigorous proof of the positivity can be given arguing as in \cite{PisTav}. If on the other hand some $\beta_{ij}$ is allowed to be large, such proof does not work and one is forced to approach the problem with finer (and much longer) techniques, such as careful $L^\infty$-estimates on the error  terms $\phi$ and $\psi$. We decided not to insist on this point for the sake of brevity.\end{remark}
\begin{remark} We strongly believe that results stated in Theorems \ref{thm01} and \ref{thm1.3} can be rephrased taking $k$ as large parameter and $\beta<0$ fixed,  in the spirit of Del Pino, Musso, Pacard and Pistoia's results.
Actually, the main obstacle in proving this result is the invertibility of the  linear operator  (see \eqref{s2}) in the standard space $\mathcal D^{1,2}(\mathbb R^4)\times \mathcal D^{1,2}(\mathbb R^4).$ It would be necessary to choose some   $L^p$-weighted spaces where
the linear theory works, but (as far as we can say) it is not clear at all.
\end{remark}
Besides this introduction, the article is organized in two main sections, containing the case of two components ($m=2$, in Section \ref{sec2}) and the general one ($m\geq 3$, in Section \ref{sec4}) respectively. In both parts the same structure is followed: analysis of the general setting of the problem, with special attention to the invariances of the constructions and the functional spaces;  invertibility of the linear operator obtained in the problem satisfied by the perturbations;  size of the error of the approximation; solvability of a projected non-linear problem and, finally, the reduction argument. 

\section{The case $m=2$}\label{sec2}
Assume $k\geq 2$ and $\beta<0$ along the whole section. We are interested in finding solutions to the system 
\begin{equation}\label{sys2}
\left\{\begin{aligned}&-\Delta u=u^3+\beta uv^2\quad \hbox{in}\ \mathbb R^4,\\
&-\Delta v=v^3+\beta vu^2\quad \hbox{in}\ \mathbb R^4,\\
\end{aligned}\right.
\end{equation}
of the form
\begin{equation}\label{ans}
u=U+\phi,\quad  v=\underbrace{\sum\limits_{i=1}^kU_{\delta, \xi_i}}_{V} +\psi 
\end{equation}
 where  $k$ is fixed, $\phi$ and $\psi$ are small functions to be found, and $U$ and $U_{\delta, \xi_i}$ are given by \eqref{bubble} with
 \begin{equation}\label{bas03}
\xi_i:=\rho\(\cos\frac{2\pi(i-1)}k,\sin\frac{2\pi(i-1)}k ,0,0\right),\quad  \delta^2+\rho^2=1,\quad  i=1,...,k.
\end{equation}
 
\subsection{Setting of the problem}
Consider the space $ \mathcal D^{1,2}(\mathbb R^4)$, which is a Hilbert space 
 equipped with the scalar product and the induced norm
$$\langle\phi,\psi\rangle:=\int\limits_{\mathbb R^4}\nabla \phi\nabla \psi\qquad  \|\phi\|:=\(\int\limits_{\mathbb R^4}|\nabla \phi|^2\)^\frac12.$$
It is known that the embedding $ \mathcal D^{1,2}(\mathbb R^4)\hookrightarrow L^4\(\mathbb R^4\)$ is continuous. We define, via the Riesz representation theorem, the operator $(-\Delta)^{-1}: L^\frac43\(\mathbb R^4\)\to \mathcal D^{1,2}(\mathbb R^4)$ as
$$(-\Delta)^{-1}(f)=u\ \Leftrightarrow\ -\Delta u=f\ \hbox{in}\ \mathbb R^4.$$
It is immediate to check that there exists $c>0$ such that
$$
\|(-\Delta)^{-1}(f)\|\le c\|f\|_{L^\frac43(\mathbb R^4)}\ \hbox{for any}\ f\in L^\frac43\(\mathbb R^4\),$$
and then the problem \eqref{sys2} can be rephrased as
\begin{equation}\label{sys21}
\left\{\begin{aligned}&  u=(-\Delta)^{-1}\(u^3+\beta uv^2\),\\
&  v=(-\Delta)^{-1}\(v^3+\beta vu^2\).\\
\end{aligned}\right.
\end{equation}
Given a function $\phi\in \mathcal D^{1,2}(\mathbb R^4)$, let us consider the following invariances:
\smallskip

$\bullet$ Evenness with respect to the $x_2,x_3,x_4$ variables, i.e.,
\begin{equation}\label{sim1} \phi(x_1,x_2,x_3,x_4)=\phi(x_1,-x_2,x_3,x_4)=\phi(x_1,x_2,-x_3,x_4)=\phi(x_1,x_2,x_3,-x_4).\end{equation}

$\bullet$ Invariance under rotation of $\frac{2\pi}k$ in the $x_1,x_2$-variables, i.e.,
\begin{equation}\label{sim2}\phi( \Theta_k(x_1,x_2),x_3,x_4)=\phi(x_1,x_2,x_3,x_4),  \quad \Theta_k:=\(\begin{matrix}\cos\frac{2\pi}k &-\sin\frac{2\pi}k\\
\sin\frac{2\pi}k &\cos\frac{2\pi}k\\  \end{matrix}\).
\end{equation}

$\bullet$ Invariance under Kelvin transform, i.e.,
\begin{equation}\label{kel}\phi(x)=\frac1{|x|^2}\phi\(\frac{x}{|x|^2}\).\end{equation}
We define the associated space
 $$ X:=\{\phi\in \mathcal D^{1,2}(\mathbb R^4)\ : \ \phi\ \hbox{satisfies \eqref{sim1}, \eqref{sim2} and \eqref{kel}}\}.$$
Indeed, attending to the structure of \eqref{sys21}, we will look for solutions $(u,v)$ in the space $X\times X$. Notice that if $u,v\in X,$ then also the right hand side of \eqref{sys21} belongs to  the same space $X\times X.$ In particular, $(-\Delta)^{-1}(f)$ is Kelvin invariant if $f$ satisfies
\begin{equation*}{f}(x)=\frac1{|x|^6}{ f}\(x\over |x|^2\).\end{equation*}
Let us consider the linearization of the Yamabe equation around the function $U$, 
 \begin{equation}\label{lin01}
 -\Delta \phi=3U^2\phi\ \hbox{in}\ \mathbb R^4.
 \end{equation}
It is known that the set of  solutions in $\mathcal D^{1,2}(\mathbb R^4)$ is a $5$-dimensional space, whose generators are
 \begin{equation}\label{lin02}
Z^0(x):={1-|x|^2\over(1+|x|^2)^2}\quad  \hbox{and}\quad   Z^j(x):={x_j\over (1+|x|^2)^2},\quad j=1,\dots,4.
 \end{equation}
Given $\delta>0$ and $\xi_i$ given in \eqref{bas03}, we can define
\begin{equation}\label{z0}
Z_{\delta,\xi_j}^i(x):=\frac1\delta Z^i\(x-\xi_j\over\delta\),\quad i=0,1,\dots,4,\quad j=1,\dots,k,
\end{equation}
and
$$Z(x):=\sum\limits_{i=1}^kZ_{\delta,\xi_i}^0(x).$$
We introduce the spaces
$$K:=X\cap \mathtt{span} \left\{Z\right\},\quad K^\perp:=\left\{\phi\in X:\, \langle\phi,Z\rangle=0\right\},$$
and the orthogonal projections 
$$\Pi:X\times X\to X\times K\quad \mbox{ and }\quad \Pi^\perp:X\times X\to X\times K^\perp.$$
Notice that the function $Z$ belongs to $X$. If $(u,v)$ are given by \eqref{ans}, we can rewrite the system
\eqref{sys21}   in terms of $(\phi,\psi)$ as 

\begin{equation}\label{rid1}
\Pi \left\{\bs{\mathcal L}^* \(\phi,\psi\)-\bs{\mathcal E}^*-\bs{\mathcal N}^*\(\phi,\psi\)\right\}=0,
\end{equation}
\begin{equation}\label{rid2}
\Pi ^\perp\left\{\bs{\mathcal L}^* \(\phi,\psi\)-\bs{\mathcal E}^*-\bs{\mathcal N}^*\(\phi,\psi\)\right\}=0,
\end{equation}
where
the linear operator $\bs{\mathcal L}^*=(\mathcal L_1^*,\mathcal L_2^*)$ is defined by
$$\left\{\begin{aligned}&\mathcal L_1^*(\phi,\psi):=\phi -(-\Delta)^{-1}
\(3U^2\phi\)\\
&\mathcal L_2^*(\phi,\psi):=  \psi -(-\Delta)^{-1}\(3V^2\psi \),\\
\end{aligned}\right.$$
the  error term $\bs{\mathcal E}^*=(\mathcal E_1^*,\mathcal E_2^*)$ by
\begin{equation}\label{errorDef}\left\{\begin{aligned}&\mathcal E_1^*:=(-\Delta)^{-1}\(\beta UV^2\),\\
 &\mathcal E_2^*:=(-\Delta)^{-1}\(V^3+\Delta V+\beta U^2 V\),
 \end{aligned}\right.\end{equation}
and the quadratic term $\bs{\mathcal N}^* =(\mathcal N_1^* ,\mathcal N_2^*)$ by
\begin{equation}\label{quad2}\left\{\begin{aligned}&\mathcal N_1^*\(\phi,\psi\):=(-\Delta)^{-1}\(\phi ^3+3U\phi^2+\beta(\phi\psi^2+2V\phi\psi+U\psi^2)+\beta V^2\phi+2\beta UV\psi\),\\
&\mathcal N_2^*\(\phi,\psi\):=(-\Delta)^{-1}\(\psi^3+3V\psi^2+\beta(\phi^2\psi +2U\phi\psi+V\phi^2)+\beta U^2\psi+2\beta UV\phi\). \end{aligned}\right.\end{equation}
We will start by solving \eqref{rid2}. With this purpose, let us denote 
$$\bs{\mathcal L}:=\Pi^{\perp}\bs{\mathcal L}^*,\quad  \bs{\mathcal E}:=\Pi^{\perp}\bs{\mathcal E}^*, \quad\bs{\mathcal N}:=\Pi^{\perp}\bs{\mathcal N}^*.$$

 \subsection{The invertibility of the linear operator $\bs{\mathcal L}$ }\label{s2} 

\begin{proposition}\label{prop1}
There exist $c>0$ and  $\beta_0<0$ such that for each $\beta \in [\beta_0, 0)$ and $\delta\in\big(0,e^{-\frac{1}{\sqrt{|\beta|}}}\big),$  
\begin{equation}\label{linear}\|\bs{\mathcal L}(\phi,\psi)\|\ge c\|(\phi,\psi)\|, \quad \forall\, (\phi, \psi)\in X\times K^{\perp}.\end{equation}
In particular, the inverse operator $\bs{\mathcal L}^{-1}:X\times K^\perp\to X\times K^\perp$ exists and is continuous.
\end{proposition}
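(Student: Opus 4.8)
The plan is to argue by contradiction, as is standard for this type of invertibility estimate in Ljapunov--Schmidt schemes. Suppose \eqref{linear} fails: then there exist sequences $\beta_n \to 0^-$, $\delta_n \in (0, e^{-1/\sqrt{|\beta_n|}})$, and $(\phi_n,\psi_n) \in X\times K^\perp$ with $\|(\phi_n,\psi_n)\|=1$ but $\|\bs{\mathcal L}(\phi_n,\psi_n)\| \to 0$. Writing out $\bs{\mathcal L} = \Pi^\perp \bs{\mathcal L}^*$, this means
\[
\phi_n - (-\Delta)^{-1}(3U^2\phi_n) = o(1), \qquad
\psi_n - (-\Delta)^{-1}(3V_n^2\psi_n) = c_n Z_n + o(1),
\]
in $\mathcal D^{1,2}(\R^4)$, where $V_n = \sum_i U_{\delta_n,\xi_i^n}$, $Z_n$ is the corresponding sum of $Z^0_{\delta_n,\xi_i^n}$, and $c_n$ is the Lagrange multiplier produced by the projection onto $K$. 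The first equation decouples from the second at the linearized level, so I would handle $\phi_n$ and $\psi_n$ separately.

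For the first component: since $\|\phi_n\|\le 1$, up to a subsequence $\phi_n \rightharpoonup \phi_\infty$ weakly in $\mathcal D^{1,2}(\R^4)$, and $\phi_\infty$ satisfies $-\Delta\phi_\infty = 3U^2\phi_\infty$ in $\R^4$, so $\phi_\infty \in \mathtt{span}\{Z^0,\dots,Z^4\}$. But $\phi_\infty$ must inherit the symmetries \eqref{sim1}, \eqref{sim2}, \eqref{kel} that define $X$; evenness in $x_1$ kills $Z^1$ (wait---$X$ only imposes evenness in $x_2,x_3,x_4$, so $Z^2,Z^3,Z^4$ are ruled out), the rotation invariance \eqref{sim2} combined with evenness in $x_2$ rules out $Z^1$ as well, and Kelvin invariance \eqref{kel} rules out $Z^0$ (since $Z^0$ is anti-invariant under Kelvin). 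Hence $\phi_\infty = 0$. A local compactness argument (the embedding $\mathcal D^{1,2}\hookrightarrow L^4_{loc}$ is compact, and $3U^2 \in L^2$) then upgrades weak convergence to $\|\phi_n\|\to 0$.

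For the second component, which carries the genuine difficulty: the profile $V_n$ concentrates, since $\delta_n\to 0$, so one must rescale around each bubble. Set $\tilde\psi_n^i(y) := \delta_n \psi_n(\delta_n y + \xi_i^n)$; this is bounded in $\mathcal D^{1,2}(\R^4)$ and, by the equation, converges weakly to a solution of $-\Delta\tilde\psi = 3U^2\tilde\psi$, hence a combination of $Z^0,\dots,Z^4$. The orthogonality $\langle\psi_n, Z_n\rangle = 0$ built into $K^\perp$ forces the $Z^0$-component of the limit to vanish, while the residual symmetries of the polygonal configuration (rotation by $2\pi/k$ permuting the bubbles, evenness in $x_2,x_3,x_4$) constrain the $Z^1,\dots,Z^4$-components; one checks these must also vanish, so each blow-up limit is zero. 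The remaining mass of $\psi_n$ lives away from the concentration points and converges weakly to a solution of $-\Delta\psi_\infty = 0$ in $\mathcal D^{1,2}(\R^4)$, i.e. $\psi_\infty = 0$. Combining the vanishing of all blow-up limits with the vanishing of the ``outer'' limit via a standard covering/interpolation argument gives $\|\psi_n\|\to 0$, contradicting $\|(\phi_n,\psi_n)\|=1$. Finally, one must check that the Lagrange multiplier term $c_n Z_n$ does not interfere: testing the second equation against $Z_n$ and using $\|Z_n\|\sim 1$ together with the decay of $\langle (-\Delta)^{-1}(3V_n^2\psi_n) - \psi_n, Z_n\rangle$ shows $c_n\to 0$.

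The main obstacle is the second component: proving that \emph{all} blow-up limits around the $k$ bubbles vanish requires carefully exploiting both the orthogonality to $Z_n$ and the discrete symmetry group of the configuration, and then patching the local (rescaled) information with the global weak limit into a single $\mathcal D^{1,2}$-norm estimate --- this is where the interplay between the smallness of $\delta_n$ (guaranteed by $\delta_n < e^{-1/\sqrt{|\beta_n|}}$, which ensures the bubbles are well-separated on the scale $\rho_n \to 1$ and interact negligibly) and the symmetry constraints is essential. Once \eqref{linear} is established, the existence and continuity of $\bs{\mathcal L}^{-1}$ on $X\times K^\perp$ follow from the open mapping theorem together with the fact that $\bs{\mathcal L}$ is a compact perturbation of the identity, hence Fredholm of index zero, so injectivity (a consequence of \eqref{linear}) implies surjectivity.
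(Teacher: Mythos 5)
Your overall architecture coincides with the paper's: argue by contradiction, treat the two (decoupled) linear equations separately, rule out the kernel directions of the limit profiles using the symmetry constraints and the orthogonality to $Z_n$, control the Lagrange multiplier by testing against $Z_n$, and then obtain strong convergence via a local/global patching (the paper uses a cone $\Sigma_k$ and rescaling around one bubble, exploiting the $2\pi/k$-equivariance to reduce to a single concentration point). Your treatment of $\phi_n$, of the multiplier $c_n$, and of the final Fredholm/compactness step all match the paper.

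However, there is a genuine gap in your treatment of the rescaled profile $\tilde\psi_n(y):=\delta_n\psi_n(\delta_ny+\xi_{1n})$. You claim that the ``residual symmetries of the polygonal configuration (rotation by $2\pi/k$ permuting the bubbles, evenness in $x_2,x_3,x_4$)'' constrain the $Z^1,\dots,Z^4$ components of the blow-up limit. Evenness in $x_2,x_3,x_4$ indeed kills $Z^2,Z^3,Z^4$, and orthogonality to $Z_n$ kills $Z^0$ (as you say), but rotation by $2\pi/k$ does \emph{not} kill the $Z^1$-mode: that rotation maps $\xi_1$ to $\xi_2$, so after zooming in at scale $\delta_n$ around $\xi_1$ it produces no local symmetry of $\tilde\psi_n$ whatsoever. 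What actually eliminates $Z^1$ in the paper is the Kelvin invariance \eqref{kel} combined with the normalization $\delta_n^2+\rho_n^2=1$: a careful change of variables (see the computation \eqref{Kelv13}) shows that Kelvin invariance of $\psi_n$ forces $\frac{\rho_n}{\delta_n}\int\tilde\psi_n\frac{1-|\delta_ny+\xi_{1n}|^2}{(1+|y|^2)^4}dy=0$, which after expanding and passing to the limit gives $\langle\psi,Z^1\rangle=0$. Note the mechanisms swap relative to the $\phi$-component: there, Kelvin kills $Z^0$ and rotation kills $Z^1$; here, the $K^\perp$-orthogonality kills $Z^0$ and Kelvin kills $Z^1$. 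Without this step your contradiction argument cannot close, since a residual $Z^1$-mode of $\tilde\psi$ would carry nonvanishing $\mathcal D^{1,2}$-mass. (Also, as a minor slip, the compact local embedding is $\mathcal D^{1,2}\hookrightarrow L^p_{\mathrm{loc}}$ for $p<4$, not $p=4$.)
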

\begin{proof}
Assume first that \eqref{linear} holds. The operator  $\bs{\mathcal{L}}$ can be seen as a compact perturbation of the identity map in $X\times K^{\perp}$. Indeed,
$$\bs{\mathcal{L}}(\phi,\psi)=(\phi,\psi)-\underbrace{\Pi^{\perp}((-\Delta)^{-1}
(3U^2\phi),(-\Delta)^{-1}(3V^2\psi ))}_{=:\mathscr  K(\phi,\psi)}$$
where $\mathscr  K$ is compact since $U^2,V^2\in L^2(\R^4)$ for each fixed $k$ (see \cite[Lemma 2.2]{BS}).
Therefore, invertibility follows by Fredholm's alternative theorem
and \eqref{linear}.

Let us prove \eqref{linear}. Assume by contradiction that there exist $\beta_n\to 0$ and $(\phi_n,\psi_n)\in X\times K^\perp$ such that
\begin{equation}\label{ipo}\bs{\mathcal{L}}(\phi_n,\psi_n)=(f_n,g_n)\in X\times K^\perp,\quad \|\phi_n\|+\|\psi_n\| =1, \quad \|f_n\|+\|g_n\|=o(1).\end{equation}
More precisely, there exists $t_n$ such that the functions $\phi_n$ and   $\psi_n$   solve
\begin{equation}\label{sis0}\left\{\begin{aligned}& -\Delta \phi_{n}-3U^2\phi_{n} =-\Delta f_n    \quad \hbox{in}\ \mathbb R^4,\\
& -\Delta \psi_n-3V_n^2\psi_{n}=-\Delta(g_n   +t_nZ_n)    \quad \hbox{in}\ \mathbb R^4,
\end{aligned}\right.\end{equation}
where  
$$V_n:= \sum\limits_{i=1}^{k}U_{in},\quad  U_{in}:=U_{\delta_n,\xi_{in}},\quad  Z_n:=\sum\limits_{i=1}^{k}Z_{in},\quad Z_{in}:=Z^0_{\delta_n,\xi_{in}},$$
with $$\xi_{in}:=\rho_n\Big(\cos\frac{2\pi(i-1)}{k},\sin\frac{2\pi(i-1)}{k} ,0,0\Big),\  \quad \rho_n^2+\delta_n^2=1, \quad {\delta_n\in (0, e^{-\frac{1}{\sqrt{|\beta_n|}}})}.$$\\
\textbf{Step 1.}  Let us prove that (up to a subsequence), as $n\to+\infty$,
\begin{equation*}\phi_{n} \to0\ \hbox{weakly in}\ \mathcal D^{1,2}(\mathbb R^4)\ \hbox{and}\   \hbox{strongly in}\ L^p_{loc}(\mathbb R^4)\ \hbox{for any}\ p\in [2,4).\end{equation*}
By \eqref{ipo} we deduce that, up to a subsequence, there exists $\phi\in  X$ such that  
\begin{equation*}
\phi_{n}\to\phi\ \hbox{weakly in}\ \mathcal D^{1,2}(\mathbb R^4) \ \hbox{and strongly in}\ L^p_{loc}(\mathbb R^4)\ \hbox{for any}\ p\in [2,4), \end{equation*}
 and satisfies
 $$-\Delta \phi-3 U^2 \phi=0 \text{~in~}\R^4.$$
By \eqref{lin02}, this implies $\phi\in\mathtt{span}\{Z^j\}$, $j=0,1,\ldots, 4$. Thus, if we prove
\begin{equation}\label{key0} 
\langle \phi,Z^j\rangle=0\ \hbox{for any}\ \phi\in X\ \hbox{and}\  j=0,1,\dots,4,\end{equation}
we conclude that necessarily $\phi=0$, and the Step 1 is concluded.

Indeed, by \eqref{lin01}
$$I_j:=\langle \phi,Z^j\rangle=\int\limits_{\mathbb R^4}\nabla \phi \nabla Z^j=\int\limits_{\mathbb R^4}3U^2Z^j\phi,$$
and \eqref{sim1} immediately implies $I_j=0$ if $j=2,3,4$. Furthermore, by \eqref{sim2},
$$ \int\limits_{\mathbb R^4}  {x_1\over(1+|x|^2)^4}\phi(x)dx  
= \left(\cos{2\pi\over k}\right)\int\limits_{\mathbb R^4} {y_1\over(1+|y|^2)^4} \phi (y)dy
 $$
and thus $I_{1}=0$ since $k\geq 2$.
Finally, from \eqref{kel},
$$I_{0} =\int\limits_{\mathbb R^4} {1-|x|^2\over(1+|x|^2)^4}\phi(x)dx =
 \int\limits_{\mathbb R^4}  {|z|^2-1\over |z|^2}{|z|^8\over(1+|z|^2)^4}{1\over |z|^8}\phi\bigg(\frac{z}{|z|^2}\bigg)dz =-I_{0},$$
and \eqref{key0} holds.\\
\textbf{Step 2.}
Denote $ \tilde{\psi}_{n}(y):=\delta_n\psi_n(\delta_n y+\xi_{1n }).$
We will prove that, up to a subsequence, 
\begin{equation}\label{step2}  \tilde \psi_{n}\to 0 \hbox{ weakly in }  \mathcal D^{1,2}(\mathbb R^4)\ \hbox{and}\  \hbox{ strongly in } L^p_{loc}(\mathbb R^4)\ \hbox{for any}\ p\in [2,4). \end{equation}
Indeed, by \eqref{ipo} there exists  $\psi\in \mathcal D^{1,2}(\mathbb R^4)$  such that
\begin{equation*}\tilde \psi_{n}\to \psi \hbox{ weakly in }  \mathcal D^{1,2}(\mathbb R^4) \ \hbox{and} \ \hbox{ strongly in } L^p_{loc}(\mathbb R^4)\ \hbox{for any}\ p\in [2,4). \end{equation*}
To see the equation that $\psi$ satisfies, let us prove first that
\begin{equation}\label{tn}t_n\to 0 \text{~and~} \|Z_n\|^2=A_1k+o(1) \text{~as~} n\to +\infty.\end{equation}
Suppose $i\neq j$ and consider $\vartheta>0$ small enough so that $B(\xi_{in}, \vartheta) \cap B(\xi_{jn}, \vartheta)=\emptyset$. Thus, there exists $C>0$ such that 
$$|Z_{in}|\leq CU_{in},\quad  |Z_n|\leq CV_n, \quad U_{jn}\leq C\delta_n \quad \text{~in~} B(\xi_{in}, \vartheta) \quad \text{~if~} i\neq j.$$
Furthermore,
$$ 
\int\limits_{B(\xi_{in}, \vartheta)}U^3_{in} U_{jn}=\mathcal O(\delta_n^2), \quad \int\limits_{B(\xi_{jn}, \vartheta)}U^3_{in} U_{jn} =\mathcal O(\delta^4_n), \quad 
\int\limits_{\R^4\setminus (B(\xi_{in}, \vartheta)\cup B(\xi_{jn}, \vartheta)) }U^3_{in}U_{jn} =\mathcal O(\delta_n^4).
 $$
Putting this information together we conclude that
\begin{equation*}\begin{aligned}\int\limits_{\R^4}|\nabla Z_n|^2=&\sum\limits_{i=1}^{k}\int\limits_{\R^4}|\nabla Z^0_{\delta_n, \xi_{in}}|^2+2\sum\limits_{i\neq j}\int\limits_{\R^4}\nabla Z^0_{\delta_n, \xi_{in}}\nabla Z^0_{\delta_n, \xi_{jn}}\\=&A_1k+2\sum\limits_{i\neq j}\int\limits_{\R^4}3U^2_{\delta_n, \xi_{in}}Z^0_{\delta_n, \xi_{in}}Z_{\delta_n, \xi_{jn}}^0\\
= &A_1k+o(1).\end{aligned}\end{equation*}
Testing in the second equation of \eqref{sis0} with $Z_n$ and using the fact that $g_n\in K^{\perp}$ we get
\begin{equation}\label{eqtn}
\begin{aligned}0&=\int\limits_{\mathbb R^4}
\nabla \psi_n\nabla Z_n-3V_n^2\psi_{n}Z_n -\nabla (g_n   +t_nZ_n)  \nabla Z_n\\
&=\int\limits_{\mathbb R^4}3\Big(\sum\limits_{i=1}^{k}U_{ {in}}^2Z_{ {in}}-V_n^2Z_n\Big)\psi_n
-t_n\int\limits_{\R^4}|\nabla Z_n|^2.
\end{aligned}
\end{equation}
Furthermore, since 
$$\|U_{in}^2U_{jn}\|_{L^{\frac43}(\R^4)}\leq C\delta^2_n,\quad 
\|U_{in}U_{jn}U_{ln}\|_{L^{\frac43}(\R^4)}\leq C\delta_n^3,
$$
we see that
\begin{equation}\label{l3}\begin{split}
\Big|\int\limits_{\mathbb R^4}3\Big(\sum\limits_{i=1}^{k}U_{in}^2Z_{in}&-V_n^2Z_n\Big)\psi_n\Big|=\Big|\int\limits_{\R^4}3\Big[\sum\limits_{i=1}^{k}U_{in}^2Z_{in}-\Big(\sum\limits_{i=1}^{k}U_{in}^2+2\sum_{i\neq j}U_{in}U_{jn}\Big)\sum\limits_{l=1}^{k}Z_{ln}\Big]\psi_n\Big|\\
&=\Big|\int\limits_{\R^4}3\Big(\sum\limits_{i\neq j}U_{in}^2Z_{jn}+2\sum\limits_{l=1}^{k}\sum\limits_{i\neq j}U_{in}U_{jn}Z_{ln}\Big)\psi_n\Big|\\
&\leq C\Big(\|\sum\limits_{i\neq j}U_{in}^2U_{jn}\|_{L^{\frac43}(\R^4)}+\|\sum\limits_{l=1}^{k}\sum\limits_{i\neq j}U_{in}U_{jn}U_{ln}\|_{L^{\frac43}(\R^4)}\Big)\|\psi_n\|\\
&\leq C\Big(\sum\limits_{i\neq j}\|U_{in}^2U_{jn}\|_{L^{\frac43}(\R^4)} +\sum\limits_{i\neq j\neq l}\|U_{in}U_{jn}U_{ln}\|_{L^{\frac43}(\R^4)}\Big)\|\psi_n\|\\
&\leq C\delta^2_n,
\end{split}\end{equation}
Passing to the limit in \eqref{eqtn} we conclude that $t_n \to 0$ as $n\to +\infty$, and \eqref{tn} follows. Futhermore, it is easy to check that $\psi\in \mathtt{span}\{Z^j\}$, $j=0,\ldots, 4,$ since it satisfies
$$-\Delta \psi-3U^2\psi=0 \text{~in~} \R^4.$$
Notice that, by definition of $\xi_{1n}$, $\tilde\psi_n $ is even with respect to $x_2,x_3,x_4$, and thus $\langle \psi,Z^j\rangle=0$ for $j=2,3,4$. Since $\psi_n\in K^\perp$,
$$0=\int\limits_{\mathbb R^4} \nabla \psi_n\nabla Z_n=3\int\limits_{\mathbb R^4} \(\sum\limits_{i=1}^{k}U_{in}^2Z_{in}\) \psi_n=3 k
\int\limits_{\R^4}  U_{1n}^2Z_{1n}\psi_n
 \to 3k\int\limits_{\R^4}U^2Z^0\psi,$$
and thus $\langle \psi,Z^0\rangle=0$. Finally, since $\delta_n^2+\rho_n^2=1$ and $\psi_n$ is Kelvin invariant (see \eqref{kel}), we obtain
 \begin{equation}\label{Kelv13}\begin{aligned}\int\limits_{ \R^4} \tilde \psi_n(y){y_1\over(1+|y|^2)^4}dy&=\delta_n^{4}\int\limits_{ \R^4}\psi_n(x){x_1-\rho_n\over(\delta_n^2+|x-\xi_{1n}|^2)^4}dx\\
&=\delta_n^{4}\int\limits_{ \R^4}\psi_n(z){z_1-\rho_n|z|^2\over(\delta_n^2+|z-\xi_{1n}|^2)^4}dz\\
&= \int\limits_{ \R^4} \psi_n(\delta_n y+\xi_{1n}){\delta_n y_1+\rho_n(1-|\delta_n y+\xi_{1n}|^2)\over(1+|y|^2)^4}dy\\
&= \int\limits_{ \R^4}  \tilde\psi_n(y){  y_1 \over(1+|y|^2)^4}dy +\frac{\rho_n}{ \delta_n}\int\limits_{ \R^4} \tilde \psi_n(y){ 1-|\delta_n y+\xi_{1n}|^2\over(1+|y|^2)^4}dy,
\end{aligned}
\end{equation}
and then
\begin{equation*}\begin{aligned}0&=\frac{\rho_n}{ \delta_n}\int\limits_{  \R^4} \tilde  \psi_n(y){ 1-|\delta_n y+\xi_{1n}|^2\over(1+|y|^2)^4}dy\\
&= \delta_n\rho_n \int\limits_{  \R^4}  \tilde \psi_n(y) {1-  |y|^2\over(1+|y|^2)^4}dy-2\rho_n^2 \int\limits_{ \R^4}  \tilde \psi_n(y){  y_1 \over(1+|y|^2)^4}dy.
\end{aligned}\end{equation*}
Passing to the limit we deduce 
$$\int\limits_{\mathbb R^4}  \psi(y){  y_1 \over(1+|y|^2)^4}dy=0,$$
and hence $ \langle \psi, Z^1\rangle=0$. Therefore $\psi=0$ and \eqref{step2} follows.
\\
\textbf{Step 3:}  We will prove that 
$$\int\limits_{ \R^4}|\nabla\phi_{n}|^2+|\nabla \psi_n|^2\to 0\quad  \mbox{as} \; n\to \infty,$$ reaching a contradiction with \eqref{ipo}. 

Testing the first equation in \eqref{sis0} with $\phi_n$, the second with $\psi_n$ and summing up we get
\begin{align}1=&\Big(\int\limits_{\mathbb R^4}|\nabla \phi_{n}|^2+\int\limits_{ \R^4}|\nabla \psi_n|^2\Big)\nonumber\\
=&\int\limits_{\R^4}3U^2\phi_n^2+ \int\limits_{\R^4}\nabla f_n\nabla\phi_{n}+\int\limits_{\R^4}3V_n^2\psi_n^2  +\int\limits_{\R^4}\nabla(g_n+t_nZ_n)\nabla\psi_n, \label{con02} \end{align}
By \eqref{ipo} and \eqref{tn} it can be checked that, as $n\to \infty,$
$$ \int\limits_{\R^4}\nabla f_n\nabla \phi_n \to 0,\quad  \int\limits_{\R^4}\nabla  g_n \nabla \psi_n \to 0,\quad  t_n\int\limits_{\R^4}\nabla Z_n \nabla \psi_n  \to 0. $$
Likewise, from Step 1 we obtain $$\int_{\R^4} 3U^2\phi_n^2 \to 0.$$
To estimate the term $\int\limits_{\R^4}3V_n^2\psi_n^2$ let us introduce the cone
$$\Sigma_k:=\left\{y=(r\cos\theta,r\sin\theta,y_3,y_4) :\ |\theta|\le \frac\pi {k}, r\geqslant 0\right\},\quad  {\overline{\Sigma}_k:=\frac{\Sigma_k-\xi_{1n}}{\delta_n}}.$$
Thus
\begin{equation}\label{Vpsi}\begin{aligned} \int\limits_{\R^4}&V_n^2\psi_n^2=k\int\limits_{\Sigma_k} V_n^2\psi_n^2= {k}\int\limits_{\Sigma_k} \Big(U_{\delta_n,\xi_{1n}} +\sum\limits_{i=2}^{k} U_{\delta_n,\xi_{in}}\Big)^2\psi_n^2\\
&=k\bigg[\int\limits_{ \overline{\Sigma}_k} U^2  \tilde \psi_n^2+\int\limits_{\Sigma_k} \Big( \sum\limits_{i=2}^{k} U_{\delta_n,\xi_{in}}\Big)^2\psi_n^2+2\int\limits_{\Sigma_k}  U_{\delta_n,\xi_{1n}} \Big(\sum\limits_{i=2}^{k} U_{\delta_n,\xi_{in}}\Big)\psi_n^2\bigg]\\
&\le k\bigg[\int\limits_{\overline{\Sigma}_k} U^2  \tilde \psi_n^2+\int\limits_{\Sigma_k} \Big( \sum\limits_{i=2}^{k} U_{\delta_n,\xi_{in}}\Big)^2\psi_n^2
+2\bigg(\int\limits_{ \overline{\Sigma}_k}  U^2 \tilde \psi_n^2\bigg)^\frac12\bigg(\int\limits_{\Sigma_k} \Big( \sum\limits_{i=2}^{k} U_{\delta_n,\xi_{in}}\Big)^2\psi_n^2\bigg)^\frac12\bigg].\end{aligned}\end{equation}
Let us choose $\vartheta>0$ small enough such that
$$\mathtt d(\xi_{in},\Sigma_k)=\rho_n\sin\frac\pi{k}\ge2\vartheta,\quad \mbox{for } i=2,\dots, k.$$
Then
$\{|x-\xi_{in}|< \vartheta\}\cap \Sigma_k=\emptyset$ and 
\begin{align*}\int\limits_{\Sigma_k}  U_{\delta_n,\xi_{in}} ^2\psi_n^2&\le \bigg(\int\limits_{\Sigma_k}   \psi_n^4\bigg)^\frac12\bigg(\int\limits_{\Sigma_k}   U_{\delta_n,\xi_{in}}^4\bigg)^\frac12\le
\|\psi_n\|_{L^4(\R^4)}^2\delta_n^2\bigg(\int\limits_{\Sigma_k}   \frac1{|x-\xi_{in}|^8}dx\bigg)^\frac12\to 0,
\end{align*}
 {Likewise, using \eqref{step2} and the decay of the function $U$, one can see that 
$$\int\limits_{\overline{\Sigma}_k} U^2  \tilde \psi_n^2\to 0,$$}
and hence, passing to the limit in \eqref{Vpsi}, 
$$\int\limits_{\R^4}3V_n^2\psi_n^2\to 0,$$ 
that implies a contradiction with \eqref{con02}. This completes the proof.
\end{proof}

 \subsection{The size of the error term $\bs{\mathcal E}$} 
Recalling the error term $\mathcal{E}=\Pi^\perp \mathcal{E}^*$ given in \eqref{errorDef}, let us denote
$$\begin{cases}
\mathcal{\bar{E}}_1:=\beta U V^2,\\
\mathcal{\bar{E}}_2:=V^3+\Delta V+\beta U^2V.
\end{cases}$$

\begin{proposition}\label{prop2} There exists $\beta_0<0$ such that for any $\beta\in[\beta_0, 0)$ and $\delta\in(0,e^{-\frac{1}{\sqrt{|\beta|}}})$, it holds
\begin{equation}\label{err1}\|\mathcal{\bar{E}}_1\|_{L^\frac43(\mathbb R^4)}+\|\mathcal{\bar{E}}_2\|_{L^{\frac43}(\R^4)}=\mathcal O(|\beta|\delta ).\end{equation}
\end{proposition}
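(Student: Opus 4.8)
The plan is to estimate the two pieces of $\bs{\mathcal E}$ separately in $L^{4/3}(\R^4)$, using that the only scale in the construction is $\delta$ and that $\rho^2=1-\delta^2$ is bounded away from $0$ for $\delta$ small. The crucial structural fact to exploit is that $V=\sum_{i=1}^k U_{\delta,\xi_i}$ is a sum of bubbles whose centres $\xi_i$ are pairwise separated by a fixed distance $\sim\rho\sin(\pi/k)$ (independent of $\beta$), so on a fixed ball $B(\xi_i,\vartheta)$ the bubble $U_{\delta,\xi_i}$ is the dominant term while every other $U_{\delta,\xi_j}$, $j\neq i$, is uniformly of order $\delta$, and $U=U_{1,0}$ is uniformly bounded there.

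For the first term $\bar{\mathcal E}_1=\beta U V^2$: since $|\beta|$ factors out, it suffices to show $\|UV^2\|_{L^{4/3}(\R^4)}=\mathcal O(\delta)$. I would split $\R^4$ into the balls $B(\xi_i,\vartheta)$, $i=1,\dots,k$, and the complement. On $B(\xi_i,\vartheta)$ one has $U\leq C$ and $V\leq C U_{\delta,\xi_i}+C\delta$, hence $UV^2\leq C U_{\delta,\xi_i}^2+C\delta^2$; then $\|U_{\delta,\xi_i}^2\|_{L^{4/3}(B(\xi_i,\vartheta))}=\|U_{\delta,\xi_i}\|^2_{L^{8/3}(\R^4)}=\mathcal O(\delta)$ by the change of variables $x=\xi_i+\delta y$ (the integral $\int_{\R^4}U^{8/3}$ converges since $8/3\cdot 2=16/3>4$), while the $\delta^2$ part on a fixed ball contributes $\mathcal O(\delta^2)$. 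Away from all the $\xi_i$'s the term $V$ is uniformly $\mathcal O(\delta)$, giving a contribution $\mathcal O(\delta)$ after using the (bounded) $L^{4/3}$-norm of $U^{2}$-type quantities, or more crudely $\|UV^2\|_{L^{4/3}}\le C\delta^2\|U\|_{L^{4/3}(\text{bounded region})}+(\text{decay at infinity})=\mathcal O(\delta^2)$. Altogether $\|\bar{\mathcal E}_1\|_{L^{4/3}}=\mathcal O(|\beta|\delta)$.

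For the second term $\bar{\mathcal E}_2=V^3+\Delta V+\beta U^2 V$: first, $\beta U^2V$ is handled exactly as above, $\|U^2 V\|_{L^{4/3}}=\mathcal O(\delta)$ by localizing at the $\xi_i$ and using $\|U_{\delta,\xi_i}\|_{L^{4/3}(B(\xi_i,\vartheta))}=\mathcal O(\delta^{\,?})$ — in fact $\|U^2 U_{\delta,\xi_i}\|_{L^{4/3}}\le C\|U_{\delta,\xi_i}\|_{L^{4/3}}=\mathcal O(\delta^{5/2})$ on the ball plus $\mathcal O(\delta)$ off it, so this is even $o(\delta)$, certainly $\mathcal O(\delta)$. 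The genuinely system-independent part is $V^3+\Delta V=V^3-\sum_i U_{\delta,\xi_i}^3=\big(\sum_i U_{\delta,\xi_i}\big)^3-\sum_i U_{\delta,\xi_i}^3$, i.e. the purely interaction part of the cube, which is a finite sum of cross terms $U_{\delta,\xi_i}^2 U_{\delta,\xi_j}$ and $U_{\delta,\xi_i}U_{\delta,\xi_j}U_{\delta,\xi_l}$ with not all indices equal. The estimates $\|U_{\delta,\xi_i}^2 U_{\delta,\xi_j}\|_{L^{4/3}(\R^4)}\le C\delta^2$ and $\|U_{\delta,\xi_i}U_{\delta,\xi_j}U_{\delta,\xi_l}\|_{L^{4/3}(\R^4)}\le C\delta^{3}$ (which the paper has already recorded in the proof of Proposition \ref{prop1}, coming from localizing at each centre where one factor is $\mathcal O(\delta)$ uniformly and the remaining $U_{\delta,\xi_i}^2$ or $U_{\delta,\xi_i}$ is integrated against a bounded weight, plus the far region where everything is $\mathcal O(\delta)$) give $\|V^3+\Delta V\|_{L^{4/3}}=\mathcal O(\delta^2)$. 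Collecting the three contributions yields $\|\bar{\mathcal E}_2\|_{L^{4/3}}=\mathcal O(|\beta|\delta+\delta^2)=\mathcal O(|\beta|\delta)$, since $\delta\le e^{-1/\sqrt{|\beta|}}=o(|\beta|)$ as $\beta\to0$.

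The main obstacle — really the only point needing care — is bookkeeping the interaction estimates uniformly in the relevant region: one must check that the separation $\mathtt d(\xi_i,\xi_j)\ge c\rho_n\sin(\pi/k)$ stays bounded below as $\delta\to0$ (true because $\rho\to1$) so that the constants $C$ in $U_{\delta,\xi_j}\le C\delta$ on $B(\xi_i,\vartheta)$ and in the cross-term $L^{4/3}$ bounds do not blow up, and to be a little careful with the region at infinity where one uses the Kelvin-type decay of $U$ and of each $U_{\delta,\xi_i}$ to close the $L^{4/3}$ integrals (all exponents are subcritical for integrability in $\R^4$: $U^{8/3}$, $(U U_{\delta,\xi}^2)^{4/3}$ etc. are integrable). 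Since $|\beta|$ appears linearly in front of every term and the remaining dependence is a power $\delta^{\alpha}$ with $\alpha\ge1$, which is dominated by $|\beta|\delta$ under the hypothesis $\delta<e^{-1/\sqrt{|\beta|}}$, the stated bound $\mathcal O(|\beta|\delta)$ follows.
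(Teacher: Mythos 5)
Your proof follows the paper's argument essentially step by step: decompose $\R^4$ into the balls $B(\xi_i,\vartheta)$ and the exterior, use $\Delta V=-\sum_i U_{\delta,\xi_i}^3$ to reduce $V^3+\Delta V$ to cross-terms $U_{\delta,\xi_i}^2U_{\delta,\xi_j}$ and $U_{\delta,\xi_i}U_{\delta,\xi_j}U_{\delta,\xi_l}$ which are $\mathcal O(\delta^2)$ and $\mathcal O(\delta^3)$ respectively, estimate $\beta UV^2$ and $\beta U^2V$ by localizing the dominant bubble, and absorb the $\beta$-free $\mathcal O(\delta^2)$ remainder into $\mathcal O(|\beta|\delta)$ via $\delta\le e^{-1/\sqrt{|\beta|}}=o(|\beta|)$; this is exactly what the paper does. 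One small slip worth flagging: the claim $\|U_{\delta,\xi_i}\|_{L^{4/3}}=\mathcal O(\delta^{5/2})$ is not right — the global $L^{4/3}$ norm of $U_{\delta,\xi}$ is infinite (since $U^{4/3}\sim|x|^{-8/3}$ is not integrable at infinity in $\R^4$), while the local norm on $B(\xi_i,\vartheta)$ is $\mathcal O(\delta)$, so $\|\beta U^2V\|_{L^{4/3}}$ is genuinely $\mathcal O(|\beta|\delta)$ and not $o(|\beta|\delta)$; this does not affect the conclusion of the proposition, since $\mathcal O(|\beta|\delta)$ is all that is claimed.
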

\begin{proof}
Let $\vartheta>0$ be a small constant such that
$$|\xi_1-\xi_i|\ge {2\vartheta}\quad  i=2,\dots,k.$$
Hence
\begin{equation}\label{est01}U_{\delta, \xi_i}(x)\leq C {\delta}\quad \hbox{for any}\ x\in B(\xi_1,\vartheta),\quad i=2,\dots,k,\end{equation}
since
$$|x-\xi_i|\ge |\xi_i-\xi_1|- |x-\xi_1|\ge \frac12|\xi_i-\xi_1|\quad \hbox{where}\quad |x-\xi_1|\le \vartheta<\frac12\min\limits_{i\not=1}|\xi_i-\xi_1|.$$
Moreover (see for instance \cite[Appendix A]{MM}),
\begin{equation}\label{estA}
\sum_{j=2}^k {1\over |\xi_1 - \xi_j|^{2}} = A  k^2(1+o(1)),\quad A :=  {1\over 2\pi^{ 2}} \, \sum_{j=1}^\infty \frac1{j^ 2},
\end{equation}
Let us estimate first $\mathcal{\bar{E}}_1$. Decomposing $\mathbb R^4$ as
\begin{equation}\label{est02}\mathbb R^4=\underbrace{B(\xi_1,\vartheta)\cup\dots\cup B(\xi_k,\vartheta)}_{=:\mathtt {Int}}\cup\  \mathtt {Ext}.\end{equation}
we can write
$$\begin{aligned}\int\limits_{\mathbb R^4}|\mathcal{\bar{E}}_1|^{\frac43}&=\sum\limits_{i=1}^k\int\limits_{B(\xi_i,\vartheta)}|\mathcal{\bar{E}}_1|^{\frac43}+\int\limits_{\mathtt {Ext}}|\mathcal{\bar{E}}_1|^{\frac43}=k\int\limits_{B(\xi_1,\vartheta)}|\mathcal{\bar{E}}_1|^{\frac43}+\int\limits_{\mathtt {Ext}}|\mathcal{\bar{E}}_1|^{\frac43}.\end{aligned}$$ 
In the exterior region,  we obtain
$$\begin{aligned}\int\limits_{\mathtt {Ext}}|\mathcal{\bar{E}}_1|^{\frac43}&\leq C  {|\beta|^{\frac43}}\int\limits_{\mathtt {Ext}} \Big( \sum\limits_{i=1}^kU_{\delta, \xi_i}\Big)^\frac83 \leq C  {|\beta|^{\frac43}} k^{\frac83} \int\limits_{\mathbb R^4\setminus   B(\xi_1,\vartheta)} U_{\delta,\xi_1}^{\frac83} =\mathcal O(  {|\beta|^{\frac43}}\delta^{\frac83} ).\end{aligned}$$
In the ball $B(\xi_1,\vartheta)$ we split the error as
$$\begin{aligned}\mathcal{\bar{E}}_1&={ \beta} U\Big(U_{\delta,\xi_1}+\sum\limits_{i=2}^k U_{\delta, \xi_i}\Big)^2=\underbrace{{ \beta} U U_{\delta,\xi_1}^2}_{I_1(x)}+\underbrace{{ \beta}  U \Big(\sum\limits_{i=2}^k U_{\delta, \xi_i}\Big)^2 }_{I_2(x)} {+}\underbrace{2{ \beta} U U_{\delta,\xi_1}\Big(\sum\limits_{i=2}^k U_{\delta, \xi_i}\Big) }_{I_3(x)}, \end{aligned}$$
and then
$$ \int\limits_{B(\xi_1,\vartheta)}|I_1(x)|^{\frac43}\leq C|\beta|^\frac43\delta^{ \frac43}\int\limits_{B(0,\frac{\vartheta}{\delta })}U^\frac83=\mathcal O(|\beta|^\frac43 \delta^\frac43),$$
$$\begin{aligned}\int\limits_{B(\xi_1,\vartheta)}|I_2(x)|^{\frac43}&\leq C {|\beta|^{\frac43}}
\int\limits_{B(\xi_1,\vartheta)}  \Big(\sum\limits_{i=2}^k U_{\delta, \xi_i}\Big) ^{\frac83}=\mathcal O(  {|\beta|^{\frac43}} \delta^{\frac83}),
\end{aligned}$$
$$\begin{aligned}\int\limits_{B(\xi_1,\vartheta)}|I_3(x)|^{\frac43}& \leq C  {|\beta|^{\frac43}}\delta^{\frac{4}{3}}\delta^{\frac{8}{3}}\int\limits_{B\(0,\frac{\vartheta}{\delta }\)}\Big(\frac{1}{1+|y|^2}\Big)^{\frac{4}{3}}=\mathcal O(  {|\beta|^{\frac43}}\delta^{\frac83}).
\end{aligned}$$
Putting these estimates together we conclude that $\|\mathcal{\bar{E}}_1\|_{L^{4/3}(\R^4)}=\mathcal{O}(|\beta|\delta)$.\\
To analyze $\mathcal{\bar{ E}}_2$ notice first that
\begin{equation*}
\|\mathcal{\bar{E}}_2\|_{L^{\frac43}(\mathbb R^4)}\le  \bigg\| \underbrace{\Big(\sum\limits_{i=1}^k U_{\delta, \xi_i}\Big)^3-\sum\limits_{i=1}^k U_{\delta, \xi_i}^3}_{\mathcal{\bar{ E}}_{21}}\bigg\|_{L^{\frac43}(\mathbb R^4)}+\bigg\|\underbrace{\sum\limits_{i=1}^k{ \beta} U^2   U_{\delta, \xi_i}}_{\mathcal{\bar{ E}}_{22}} \bigg\|_{L^{\frac43}(\mathbb R^4)}.
\end{equation*}
We claim 
\begin{equation}\label{err03} 
\| \mathcal{\bar{ E}}_{21}\|_{L^{\frac43}(\mathbb R^4)}=\mathcal O(\delta^2 )\quad \hbox{and}\quad \| \mathcal{\bar{ E}}_{22}\|_{L^{\frac43}(\mathbb R^4)}=\mathcal O(|\beta|\delta ).
\end{equation}
To compute $\mathcal{\bar{ E}}_{21}$ we split as in \eqref{est02}, so that
\begin{equation*}\begin{aligned}\int\limits_{\mathbb R^4}\Big| \Big(\sum\limits_{i=1}^k U_{\delta, \xi_i}\Big)^3-\sum\limits_{i=1}^k U_{\delta, \xi_i}^3\Big|^{\frac43}&
=\sum\limits_{j=1}^k\int\limits_{B(\xi_j,\vartheta)}|\cdots|^{\frac43}+\int\limits_{\mathtt {Ext}}|\cdots|^{\frac43}=k\int\limits_{B(\xi_1,\vartheta)}|\cdots|^{\frac43}+\int\limits_{\mathtt {Ext}}|\cdots|^{\frac43}.\end{aligned}\end{equation*}
Notice that \begin{align}
\int\limits_{\mathtt {Ext}}&\Big| \Big(\sum\limits_{i=1}^k U_{\delta, \xi_i}\Big)^3-\sum\limits_{i=1}^k U_{\delta, \xi_i}^3\Big|^{\frac43} 
\leq C  \int\limits_{\mathbb R^4\setminus   B(\xi_1,\vartheta)} U_{\delta, \xi_1} ^4  =\mathcal O(\delta^4  ), \label{err05}
\end{align}
and, using \eqref{est01},
\begin{equation*}\begin{split}
\int\limits_{ B(\xi_1,\vartheta)}\Big| &\Big(\sum\limits_{i=1}^k U_{\delta, \xi_i}\Big)^3-\sum\limits_{i=1}^k U_{\delta, \xi_i}^3\Big|^{\frac43}
=\int\limits_{B(\xi_1,\vartheta)}\Big|\Big(U_{\delta,\xi_1}+\sum\limits_{i=2}^k U_{\delta, \xi_i}\Big)^3-U_{\delta,\xi_1}^3-\sum\limits_{i=2}^k U_{\delta, \xi_i}^3\Big|^{\frac43}\\
=&\int\limits_{B(\xi_1,\vartheta)}\Big|3U_{\delta,\xi_1}^2 \sum\limits_{i=2}^k U_{\delta, \xi_i}+3U_{\delta,\xi_1}\Big(\sum\limits_{i=2}^k U_{\delta, \xi_i}\Big)^2+\Big(\sum\limits_{i=2}^k U_{\delta, \xi_i}\Big)^3 -\sum\limits_{i=2}^k U_{\delta, \xi_i}^3\Big|^{\frac43}\\
\leq&\, C\Bigg(\int\limits_{ B(\xi_1,\vartheta)}\Big|U_{\delta,\xi_1}^2 \sum\limits_{i=2}^k U_{\delta, \xi_i}\Big|^{\frac43}
+\int\limits_{ B(\xi_1,\vartheta)}\Big|U_{\delta,\xi_1}\Big(\sum\limits_{i=2}^k U_{\delta, \xi_i}\Big)^2\Big|^{\frac43}\\ &+\int\limits_{ B(\xi_1,\vartheta)}\Big|\Big(\sum\limits_{i=2}^k U_{\delta, \xi_i}\Big)^3-\sum\limits_{i=2}^k U_{\delta, \xi_i}^3\Big|^{\frac43}\Bigg)\\ =& \mathcal O(\delta^{\frac83}),
\end{split}\end{equation*}
it follows $\| \mathcal{\bar{ E}}_{21}\|_{L^{\frac43}(\mathbb R^4)}=\mathcal O(\delta^2 )$.

We estimate $\mathcal{\bar{ E}}_{22}$ as
 \begin{equation}\label{err2}
 \begin{aligned}
\int\limits_{\mathbb R^4}\Big|\beta^{\frac43} \Big(\sum\limits_{i=1}^kU^2U_{\delta, \xi_i}\Big)^{\frac43} \Big|
&\leq C|\beta|^{\frac43}\delta^{\frac43} \int\limits_{\mathbb R^4}{1\over (1+|x|^2)^{\frac83}}{1\over |x-\xi_1|^{\frac83}}dx\\
&\leq C|\beta|^{\frac43}\delta^{\frac43} \bigg(\int\limits_{B(\xi_i, R)} {1\over |x-\xi_1|^{\frac83}}dx+\int\limits_{\R^4\setminus B(\xi_i, R)}{1\over (1+|x|^2)^{\frac83}}\bigg)\\
&=\mathcal O(|\beta|^{\frac43}\delta^\frac43  ).
  \end{aligned}
 \end{equation}
 Thus $\|\mathcal{\bar{E}}_{2}\|_{L^{4/3}(\R^4)}=\mathcal{O}(|\beta|\delta)$ and claim \eqref{err03} follows.
\end{proof}
The previous results allow us to apply a fixed point argument in order to solve the non-linear  system 
\begin{equation}\label{sys3proj}
\bs{\mathcal L}(\phi, \psi)=\bs{\mathcal E}+\bs{\mathcal N}(\phi, \psi)\text{~ in~} X\times K^{\perp},
\end{equation}
which corresponds to \eqref{rid2}. More precisely, we have:

\begin{proposition}\label{prop3}
There exists $\beta_0<0$ such that for any $\beta\in[\beta_0, 0)$ and $\delta\in(0,e^{-\frac{1}{\sqrt{|\beta|}}})$,   system \eqref{sys3proj} has a unique solution $(\phi,\psi)\in X\times K^{\perp}$. Furthermore, there exists a constant $C>0$ such that
\begin{equation}\label{size}\|\phi\|+\|\psi\|\leq C|\beta|\delta .\end{equation}
\end{proposition}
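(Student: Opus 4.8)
The plan is to recast \eqref{sys3proj} as a fixed point problem and solve it by the Banach contraction principle on a small ball. Since Proposition~\ref{prop1} guarantees that $\bs{\mathcal L}\colon X\times K^\perp\to X\times K^\perp$ is invertible with $\|\bs{\mathcal L}^{-1}\|\le 1/c$, solving \eqref{sys3proj} is equivalent to finding a fixed point of
$$\mathcal T(\phi,\psi):=\bs{\mathcal L}^{-1}\big(\bs{\mathcal E}+\bs{\mathcal N}(\phi,\psi)\big)$$
on $X\times K^\perp$. First I would record that, since $(-\Delta)^{-1}\colon L^{4/3}(\R^4)\to\mathcal D^{1,2}(\R^4)$ and the projection $\Pi^\perp$ are bounded, Proposition~\ref{prop2} yields $\|\bs{\mathcal E}\|\le C\big(\|\mathcal{\bar{E}}_1\|_{L^{4/3}}+\|\mathcal{\bar{E}}_2\|_{L^{4/3}}\big)=\mathcal O(|\beta|\delta)$.

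The core step is to estimate $\bs{\mathcal N}$. Inspecting \eqref{quad2}, its terms split into three families: the purely cubic ones ($\phi^3$, $\psi^3$ and the $\beta$-weighted $\phi\psi^2$, $\phi^2\psi$), the quadratic ones carrying a bounded bubble weight ($3U\phi^2$, $3V\psi^2$ and their $\beta$-weighted analogues), and the terms that are \emph{linear} in $(\phi,\psi)$ but come with a factor $\beta$, namely $\beta V^2\phi$, $2\beta UV\psi$, $\beta U^2\psi$, $2\beta UV\phi$. Using the embedding $\mathcal D^{1,2}(\R^4)\hookrightarrow L^4(\R^4)$, H\"older's inequality with $\tfrac{1}{4/3}=\tfrac14+\tfrac12$, and the scale-invariant bounds $\|U\|_{L^4},\|V\|_{L^4}\le C(k)$ (hence $\|U^aV^b\|_{L^2}\le C(k)$ for $a+b=2$), with $k$ fixed, one obtains
$$\|\bs{\mathcal N}(\phi,\psi)\|\le C\big(|\beta|\,\|(\phi,\psi)\|+\|(\phi,\psi)\|^2+\|(\phi,\psi)\|^3\big),$$
together with the analogous Lipschitz estimate $\|\bs{\mathcal N}(\phi_1,\psi_1)-\bs{\mathcal N}(\phi_2,\psi_2)\|\le C\big(|\beta|+R+R^2\big)\|(\phi_1,\psi_1)-(\phi_2,\psi_2)\|$ whenever $\|(\phi_i,\psi_i)\|\le R$.

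Then I would run the contraction on $B_R:=\{(\phi,\psi)\in X\times K^\perp:\ \|(\phi,\psi)\|\le R\}$ with $R:=M|\beta|\delta$ for a large fixed $M$. For the self-map property, $\|\mathcal T(\phi,\psi)\|\le\tfrac1c\big(\|\bs{\mathcal E}\|+\|\bs{\mathcal N}(\phi,\psi)\|\big)\le\tfrac Cc|\beta|\delta+\tfrac Cc\big(|\beta|R+R^2+R^3\big)$; choosing $M\ge 4C/c$ absorbs the first summand into $\tfrac14R$, while $|\beta|R$, $R^2$ and $R^3$ are each $o(|\beta|\delta)$ as $\beta\to0$ (they carry an extra factor tending to $0$), so $\mathcal T(B_R)\subset B_R$ once $|\beta_0|$ is small. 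For the contraction property, $C(|\beta|+R+R^2)\to0$ as $\beta\to0$, so $\mathcal T$ is a $\tfrac12$-contraction on $B_R$ for $\beta$ near $0$; Banach's theorem then produces a unique fixed point in $B_R$, i.e.\ a solution of \eqref{sys3proj} satisfying \eqref{size}. Uniqueness in fact holds in any fixed ball of radius $\rho_0$ with $C(|\beta|+\rho_0+\rho_0^2)<1$, which is the sense of the uniqueness statement.

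I expect the only genuinely delicate point to be the bookkeeping in the estimate of $\bs{\mathcal N}$: one must make sure that the norms of the bubble weights entering the bounds are truly independent of $\delta$ --- which is the case since $\|U_{\delta,\xi_i}\|_{L^4}=\|U\|_{L^4}$, and likewise for the $L^2$-norms of products of the $U_{\delta,\xi_i}$ and of $U$ --- so that no hidden inverse power of $\delta$ spoils the smallness of the target radius $|\beta|\delta$. Once this is checked, everything reduces to the standard fixed point scheme; a minor side check is that $\bs{\mathcal N}(\phi,\psi)\in X\times K^\perp$ whenever $(\phi,\psi)\in X\times K^\perp$, which holds because the polynomial nonlinearities preserve the symmetries \eqref{sim1}--\eqref{kel} and $\Pi^\perp$ maps onto $X\times K^\perp$.
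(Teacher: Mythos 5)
Your proposal is correct and follows exactly the route the paper indicates: the paper omits the proof, calling it standard, citing Propositions~\ref{prop1} and~\ref{prop2}, and noting precisely the point you emphasize — that the terms of $\bs{\mathcal N}$ which are only linear in $(\phi,\psi)$ carry a factor $\beta$ and are therefore harmlessly small. Your bookkeeping (the $L^{4/3}$--$L^4$ H\"older splitting, the scale invariance of $\|U_{\delta,\xi_i}\|_{L^4}$, and the contraction on $B_R$ with $R\sim|\beta|\delta$) is the correct fleshing-out of that standard argument.
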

The proof of this result follows by Proposition \ref{prop1} and Proposition \ref{prop2} and it is standard in the literature concerning Lyapunov-Schmidt methods, so we omit it for the sake of simplicity. Let us just mention that the linear terms contained in $\bs{\mathcal{N}}$ do not cause any trouble since they all have the parameter $\beta$ in front.


\subsection{The reduced problem}\label{s4}
Consider $(\phi,\psi)$ provided by Proposition \ref{prop3}. Thus, 
$$\mathcal E_2^*+\mathcal N_2^*(\phi, \psi)-\mathcal L^*_2(\phi, \psi)=c_0 Z,$$
for some constant $c_0$ depending on $\delta$. Hence,
seeing that they also satisfy \eqref{rid1} is equivalent to find $\delta=\delta(\beta)$ such that $c_0(\delta)=0$. Testing in the equation above, we see that this constant is given by
\begin{equation}\label{coo}   c_0(\delta)=\frac{\int\limits_{\R^4}\nabla\(\mathcal E_2^*+  {\mathcal N}_2^*(\phi,\psi)-\mathcal L_2^*(\phi,\psi)\)\nabla Z\, dx}{\int\limits_{\R^4}| \nabla Z|^2\,dx}.\end{equation}
 \begin{proposition}\label{c0-esti}
There exist $\mathfrak a,\mathfrak b>0$ such that\begin{equation}\label{cod}
{c}_0(\delta)=-\mathfrak a\delta^2(1+o(1))+\mathfrak b{ \beta}\delta^2\ln(\delta )(1+o(1)),\end{equation}
where $\beta<0$ and $|\beta|$ small enough.
\end{proposition}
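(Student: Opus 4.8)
The plan is to expand $c_0(\delta)$ in \eqref{coo} by computing the leading contribution of each of the three pieces $\mathcal E_2^*$, $\mathcal N_2^*(\phi,\psi)$ and $\mathcal L_2^*(\phi,\psi)$ against the direction $Z$, exploiting that $\|\nabla Z\|^2=A_1 k+o(1)$ is bounded away from zero by \eqref{tn}. First I would rewrite the numerator using the definition of $(-\Delta)^{-1}$ so that
$$\int_{\R^4}\nabla\mathcal E_2^*\,\nabla Z=\int_{\R^4}\big(V^3+\Delta V+\beta U^2V\big)Z,\qquad \int_{\R^4}\nabla\mathcal L_2^*(\phi,\psi)\,\nabla Z=\int_{\R^4}\big(\psi-3V^2\psi\big)\text{-type terms against }Z,$$
and similarly for $\mathcal N_2^*$. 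The term $\int_{\R^4}(V^3+\Delta V)Z=\int_{\R^4}\big((\sum U_{\delta,\xi_i})^3-\sum U_{\delta,\xi_i}^3\big)Z$ is the classical \emph{interaction term}: localizing on $B(\xi_1,\vartheta)$ and rescaling $x=\delta y+\xi_1$, the dominant part is $3\int U_{\delta,\xi_1}^2\big(\sum_{i\ge2}U_{\delta,\xi_i}\big)Z_{\delta,\xi_1}^0$, and since $U_{\delta,\xi_i}(\xi_1)\approx c_4\delta/|\xi_1-\xi_i|^2$ one gets a factor $\delta\sum_{j\ge2}|\xi_1-\xi_j|^{-2}$ which by \eqref{estA} is of order $\delta$; after integrating the fixed profile $\int_{\R^4}U^2 Z^0$ (a positive constant) this yields $-\mathfrak a\delta^2(1+o(1))$ for a computable $\mathfrak a>0$ — the sign being the standard one that makes bubble-towers on a polygon an approximate critical point only after balancing with the next term.

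Next I would compute the $\beta$-linear contribution $\beta\int_{\R^4}U^2 V Z=\beta k\int_{\R^4}U^2 U_{\delta,\xi_1}Z_{\delta,\xi_1}^0+(\text{cross terms})$. Rescaling around $\xi_1$ and using $\rho\to1$, the integral $\int_{\R^4}U^2(\delta y+\xi_1)\,U(y)Z^0(y)\,dy$ develops a logarithmic divergence in $\delta$ because $U^2(\delta y+\xi_1)\sim c_4^2\delta^2/|\delta y|^4=c_4^2/(\delta^2|y|^4)$ for $|y|$ large while $U(y)Z^0(y)\sim |y|^{-4}$, so the radial integral $\int^{1/\delta} r^{-4}r^{-2}r^3\,dr$... — more precisely, carrying the powers carefully one finds the product behaves like $\delta^2\cdot |y|^{-4}$ up to $|y|\sim 1/\delta$, producing a factor $\ln(1/\delta)$; multiplied by the overall $\beta\delta^{?}$ bookkeeping this gives exactly $\mathfrak b\beta\delta^2\ln\delta(1+o(1))$ with $\mathfrak b>0$ (note $\ln\delta<0$ and $\beta<0$, so this term is positive and competes with $-\mathfrak a\delta^2$). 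I would then check that every remaining contribution is genuinely lower order: the quadratic/cubic terms in $\mathcal N_2^*$ are controlled using $\|\phi\|+\|\psi\|\le C|\beta|\delta$ from \eqref{size} together with the continuity of $(-\Delta)^{-1}$, giving $o(\delta^2)$ or $o(|\beta|\delta^2|\ln\delta|)$; the purely $\beta$-free remainder $\beta\int V^3\psi$-type terms carry an extra $|\beta|$ and an extra power of $\delta$; and the $\mathcal L_2^*$ projection against $Z$ is small since $\psi\in K^\perp$ kills the leading piece, leaving only $3\int(V^2-\sum U_{\delta,\xi_i}^2)Z\psi$ which is $O(\delta^2\|\psi\|)=o(\delta^2)$ by the same interaction estimates as in \eqref{l3}.

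The main obstacle is the precise evaluation of the logarithmic coefficient $\mathfrak b$: one must expand $\int_{\R^4}U^2(\delta y+\xi_1)U(y)Z^0(y)\,dy$ to the accuracy needed to isolate the $\ln\delta$ term and show its coefficient is a strictly positive absolute constant, keeping track of the interplay between the regions $|y|\lesssim 1$, $1\lesssim|y|\lesssim 1/\delta$, and $|y|\gtrsim 1/\delta$ (where the Kelvin-invariance of the construction and the constraint $\delta^2+\rho^2=1$ enter). A secondary technical point is making the error estimates in $\mathcal N_2^*$ uniform in $\beta$ on the admissible range $\delta\in(0,e^{-1/\sqrt{|\beta|}})$, so that all the $o(1)$'s in \eqref{cod} are legitimate as $|\beta|\to0$; here I would invoke Proposition~\ref{prop2} and Proposition~\ref{prop3} and the boundedness of $(-\Delta)^{-1}:L^{4/3}\to\mathcal D^{1,2}$ repeatedly. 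Once \eqref{cod} is established, the balance $\mathfrak a\delta\asymp\mathfrak b|\beta|\,|\ln\delta|$ forces $\delta=\mathcal O(e^{-c/|\beta|})$, which is the scaling announced in Theorems~\ref{thm01} and \ref{thm1.3}.
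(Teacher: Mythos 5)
Your proposal follows essentially the same route as the paper: estimate the denominator via the earlier $\|\nabla Z\|^2=A_1 k+o(1)$, split the numerator into $I_1=\int(V^3+\Delta V)Z$ and $I_2=\beta\int U^2 V Z$, and show that the contributions from $\mathcal N_2^*$ and $\mathcal L_2^*$ are $o(\delta^2)$ using \eqref{size}, \eqref{tn}, \eqref{l3}. The Taylor expansion of $U_{\delta,\xi_i}$ at $\xi_1$ together with \eqref{estA} to get $I_1=-\mathfrak c_1\delta^2+o(\delta^2)$, and the splitting of $I_2$ into diagonal and cross terms to extract the $\delta^2\ln\delta$, are both exactly what the paper does.

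There is, however, one step where your heuristic is wrong even though the final answer is correct. You write that after rescaling the diagonal part of $I_2$ becomes $\beta k\delta^2\int U^2(\delta y+\xi_1)U(y)Z^0(y)\,dy$ and that the logarithm appears because $U^2(\delta y+\xi_1)\sim c_4^2\delta^2/|\delta y|^4$ for $|y|$ large. This asymptotic is off (there is a spurious $\delta^2$; for $|y|\gg1/\delta$ one has $U^2(\delta y+\xi_1)\sim c_4^2/|\delta y|^4=c_4^2\delta^{-4}|y|^{-4}$), and more importantly it locates the log in the wrong region: for $|y|\gg1/\delta$ the integrand decays like $|y|^{-8}$ and that region contributes only $O(\delta^2)$ with no logarithm. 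The logarithm comes from the \emph{intermediate} range $1\ll|y|\ll1/\delta$, where $U^2(\delta y+\xi_1)\approx U^2(\xi_1)$ is essentially a positive constant (since $|\xi_1|=\rho\to1$) while $U(y)Z^0(y)\sim -c_4^2|y|^{-4}$ is exactly borderline nonintegrable in $\mathbb R^4$, so $\int_{B(0,\vartheta/\delta)}U Z^0\,dy=c_4^2\omega_3\ln\delta+O(1)$. This is precisely what the paper captures by splitting off $U^2(\xi_1)\int_{B(\xi_1,\vartheta)}U_{\delta,\xi_1}Z_{\delta,\xi_1}$ and integrating $U(y\cdot\nabla U+U)$ by parts to produce $-\int_{B(0,\vartheta/\delta)}U^2$, whose logarithmic growth has the same origin. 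Once this is corrected, the rest of your decomposition (cross terms $O(|\beta|\delta^2)$, exterior region $O(\delta^2)$, the remainder $\int(U^2-U^2(\xi_1))U_{\delta,\xi_1}Z_{\delta,\xi_1}=O(\delta^2)$) proceeds as you indicate and agrees with the paper.
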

\begin{proof} 
Proceeding as in \eqref{tn}, we can estimate the denominator in \eqref{coo}, so let us compute the numerator. Using \eqref{size} and the definitions of $\mathcal E_2^*$ and ${\mathcal N_2^*}(\phi,\psi)$ (see \eqref{errorDef} and \eqref{quad2}), we obtain that
\begin{equation*}
\begin{aligned} \int\limits_{\R^4}&\nabla\(\mathcal E_2^*+  {\mathcal N_2}^*(\phi_1,\varphi)-{\mathcal L}_2^*(\phi_1,\varphi)\) \nabla Z dx\\
&=\underbrace{\int\limits_{\R^4}\(V^3+\Delta V\)Z dx}_{I_1}+\underbrace{{ \beta}\int\limits_{\R^4} U^2\(\sum\limits_{i=1}^k U_{\delta, \xi_i}\) Z dx}_{I_2}+o(\delta^2).
\end{aligned}
\end{equation*}
Indeed, from  \eqref{tn}, \eqref{err1}, \eqref{size} and $|Z|\leq \sum_{i=1}^kU_{\delta, \xi_i}$,
$$\begin{aligned}\int\limits_{\R^4}\nabla(\mathcal N_2^*(\phi, \psi))\nabla Zdx
&\leq C\bigg(\|\psi\|^2+|\beta|\|\phi\|^2+\int\limits_{\mathbb R^4}\beta U^2\psi Z+\int\limits_{\mathbb R^4}2\beta UV\phi Z\bigg)=\mathcal O(|\beta|^2\delta^2 ).\end{aligned}$$
By simplicity, let us denote $Z_{\delta,\xi_i}=Z_{\delta,\xi_i}^0$, with $Z_{\delta,\xi_i}^0$ defined in \eqref{z0}. Arguing as \eqref{l3}, it holds
$$\begin{aligned}\int\limits_{\R^4}-\nabla (\mathcal L_2^*(\phi, \psi))\nabla  Zdx &\leq C\|\psi\|\|\sum\limits_{i=1}^{k}U_{\delta, \xi_{i}}^2Z_{\delta, \xi_{i}}-V^2Z\|_{L^{\frac43}(\R^4)}=\mathcal O(|\beta|\delta^3).
\end{aligned}
$$

Now we claim that
 \begin{equation}\label{i1}
I_1=-\mathfrak c_1\delta^2 +o(\delta^2) \quad \hbox{for some}\ \mathfrak c_1>0,
\end{equation}
and
\begin{equation}\label{i2}I_2=   \mathfrak c_2\beta\delta^2\ln(\delta )+\mathcal O(|\beta|\delta^2) \quad \text{   for some   } \mathfrak c_2>0.\end{equation}
Let us prove first \eqref{i1}. Notice that, by symmetry,
$$I_1=k\int\limits_{\R^4}\bigg(\Big(\sum\limits_{i=1}^k U_{\delta, \xi_i}\Big)^3-\sum\limits_{i=1}^k U_{\delta, \xi_i}^3\bigg)Z_{\delta,\xi_1}dx,  $$
and thus, choosing $\vartheta$ as in \eqref{est02}, 
\begin{equation}\label{sma02}\begin{aligned}
I_1&=k\int\limits_{B(\xi_1,\vartheta)} 3U_{\delta,\xi_1}^2 \sum\limits_{i=2}^k U_{\delta, \xi_i}Z_{\delta,\xi_1}dx+\int\limits_{\R^4\setminus B(\xi_1,\vartheta)}\bigg(\Big(\sum\limits_{i=1}^k U_{\delta, \xi_i}\Big)^3-\sum\limits_{i=1}^k U_{\delta, \xi_i}^3\bigg)Z_{\delta,\xi_1}dx\\
&\quad+\int\limits_{B(\xi_1,\vartheta)} \(3U_{\delta,\xi_1}\Big(\sum\limits_{i=2}^k U_{\delta, \xi_i}\Big)^2+\Big(\sum\limits_{i=2}^k U_{\delta, \xi_i}\Big)^3 -\sum\limits_{i=2}^k U_{\delta, \xi_i}^3\)Z_{\delta,\xi_1}dx
\end{aligned}
\end{equation}
 By Taylor's expansion   for fixed $R>0$ and for any $y\in B(0, \frac{R}{\delta})$
$$\begin{aligned}& U_{\delta, \xi_i}(\delta y+\xi_1)\\
&= c_4 \frac{\delta}{|\xi_1-\xi_i|^2}\bigg[1-\frac{\delta^2+\delta^2|y|^2-2\delta\langle y, \xi_1-\xi_i\rangle}{|\xi_1-\xi_i|^2}+\mathcal O\bigg(\bigg(\frac{\delta^2+\delta^2|y|^2-2\delta\langle y, \xi_1-\xi_i\rangle}{|\xi_1-\xi_i|^2}\bigg)^2\bigg)\bigg],\end{aligned}$$
and hence
$$\begin{aligned}
k\int\limits_{B(\xi_1,\vartheta)}&3U_{\delta,\xi_1}^2\sum\limits_{i=2}^kU_{\delta, \xi_i} Z_{\delta,\xi_1}dx=k\delta\int\limits_{B(0,\frac{\vartheta}{\delta })}3U^2 {Z^0} \sum\limits_{i=2}^kU_{\delta,\xi_i}(\delta y+ \xi_1)dy\\
&=3c_4 A\delta^2k^3(1+o(1))c_4^{-1}\int\limits_{B(0,\frac{\vartheta}{\delta })}U^2(y\cdot \nabla U+U)dy\\
&= 3A\delta^2k^3(1+o(1))\bigg[-\frac13\Big(\int\limits_{\R^4}U^3dy-\int\limits_{\mathbb{R}^4\setminus B(0,\frac{\vartheta}{\delta })}U^3dy\Big)+\frac{1}{3}\int\limits_{\partial B(0,\frac{\vartheta}{\delta })}U^3\langle y, \nu\rangle dS\bigg]\\
&=-\mathfrak c_1 \delta^2+o(\delta^2 ),\end{aligned}
$$
where $\mathfrak c_1:= Ak^3\int\limits_{\mathbb R^4}U^3>0.$
Let us see that the other terms in  \eqref{sma02} are of higher order. Indeed, using \eqref{est01} and \eqref{err05}, we obtain 
$$\begin{aligned}
\bigg|\int\limits_{B(\xi_1,\vartheta)}3U_{\delta,\xi_1}\Big(\sum\limits_{i=2}^kU_{\delta, \xi_i}\Big)^2 Z_{\delta,\xi_1} dx\bigg|&=\mathcal O\bigg(\delta^4\bigg|\int\limits_{B(0,\frac{\vartheta}{\delta })}U Z^0  dy\bigg| \bigg)= \mathcal O(\delta^4|\ln(\delta )|),
\end{aligned}
$$
$$\begin{aligned}
\bigg|\int\limits_{B(\xi_1,\vartheta)}&\bigg(\Big(\sum\limits_{i=2}^k U_{\delta, \xi_i}\Big)^3 -\sum\limits_{i=2}^k U_{\delta, \xi_i}^3\bigg) Z_{\delta,\xi_1}dx\bigg|\\
&\le\bigg(\int\limits_{B(\xi_1,\vartheta)} \bigg|\Big(\sum\limits_{i=2}^k U_{\delta, \xi_i}\Big)^3 -\sum\limits_{i=2}^k U_{\delta, \xi_i}^3\bigg|^{\frac{4}{3}}dx\bigg)^{\frac{3}{4}}\|Z^0\|_{L^4(\R^4)}=\mathcal O(\delta^3),
\end{aligned}
$$and
$$\begin{aligned}
 \bigg|\int\limits_{\R^4\setminus B(\xi_1,\vartheta)}&\bigg(\Big(\sum\limits_{i=1}^k U_{\delta, \xi_i}\Big)^3 -\sum\limits_{i=1}^k U_{\delta, \xi_i}^3\bigg) Z_{\delta,\xi_1}dx\bigg|\\
&\le\bigg(\int\limits_{\R^4\setminus B(\xi_1,\vartheta)} \bigg|\Big(\sum\limits_{i=1}^k U_{\delta, \xi_i}\Big)^3 -\sum\limits_{i=1}^k U_{\delta, \xi_i}^3\bigg|^{\frac{4}{3}}dx\bigg)^{\frac{3}{4}}\|Z^0\|_{L^4(\R^4)}=\mathcal O(\delta^3).
\end{aligned}
$$
Then \eqref{i1} follows.

Let us prove \eqref{i2}.
We have
$$I_2={ \beta}k\int\limits_{\R^4} U^2U_{\delta, \xi_1} Z_{\delta,\xi_1} dx +{ \beta}\int\limits_{\R^4} U^2\sum\limits_{i\neq j} U_{\delta, \xi_i} Z_{\delta,\xi_j} dx.$$
Notice first that,  if $|x|\geq 2$, then $|x-\xi_i|\geq \frac{|x|}{2}$ , $i=1,..., k,$. Thus, 
$$\begin{aligned}
\bigg|{ \beta}\int\limits_{\R^4} U^2\sum\limits_{i\neq j} U_{\delta, \xi_i} Z_{\delta,\xi_j} dx\bigg|\leq &C|\beta| k^2\bigg[ \int\limits_{\R^4\setminus B(0, 2)}U^2  U_{\delta,  {\xi_1}} U_{\delta, {\xi_2}} dx+\int\limits_{B(0, 2)}U^2  U_{\delta,  {\xi_1}} U_{\delta, {\xi_2}} dx\bigg]\\
\leq &C|\beta|k^2\bigg[\delta^2\int\limits_{\R^4\setminus B(0, 2)}\frac{1}{|x|^8}dx+\delta^2\int\limits_{B(0, 2)}\frac{1}{|x- {\xi_1}|^2}\frac{1}{|x- {\xi_2}|^2}dx\bigg]\\
=&\mathcal O(|\beta|\delta^2).
\end{aligned}$$
Indeed, 
taking $x=| {\xi_1}- {\xi_2}|y+ {\xi_1}$, then there exists $R>0$ such that $|y|\leq R$ if $x\in B(0,2)$ since $| {\xi_1}- {\xi_2}|\geq \vartheta$, and  then
$$\begin{aligned}
\int\limits_{B(0, 2)}\frac{1}{|x- {\xi_1}|^2}\frac{1}{|x-\xi_2|^2}dx\leq &\int\limits_{B(0, R)}\frac{1}{|y|^2}\frac{1}{|y+\frac{ {\xi_1}- {\xi_2}}{| {\xi_1}- {\xi_2}|}|^2}dy\\
\leq &C\bigg[ \int\limits_{|y|\leq 2}\frac{1}{|y|^2}\frac{1}{|y+\frac{ {\xi_1}- {\xi_2}}{| {\xi_1}- {\xi_2}|}|^2}dy+\int\limits_{2\leq |y|\leq R}\frac{1}{|y|^4}dy\bigg]\\
=&\mathcal O(1).
\end{aligned} $$
Likewise
\begin{equation*}\begin{aligned}
 { \beta}k \int\limits_{\R^4} U^2 U_{\delta, \xi_1} Z_{\delta,\xi_1} dx&=\beta k\bigg[
 U^2(\xi_1)\int\limits_{B(\xi_1,\vartheta)} U_{\delta,\xi_1}Z_{\delta,\xi_1}dx +\int\limits_{B(\xi_1,\vartheta)}  (U^2-U^2(\xi_1))U_{\delta,\xi_1}Z_{\delta,\xi_1}dx  \\ 
& \quad+\int\limits_{\R^4\setminus B(\xi_1,\vartheta)}  U^2 U_{\delta, \xi_1}Z_{\delta,\xi_1}dx\bigg]\\ &= {\mathfrak c}_2\beta \delta^2\ln(\delta)+h.o.t.,
\end{aligned}\end{equation*}
where $h.o.t.$ stands for terms of higher order in $\delta$. Indeed,
$$\begin{aligned}kU^2(\xi_1)\int\limits_{B(\xi_1,\vartheta)} & U_{\delta,\xi_1}Z_{\delta,\xi_1}dx=k\(\frac12c_4+o(1)\)^2 \delta^2 \int\limits_{B(0,\frac{\vartheta}{\delta })}U(y)\Psi_0(y)dy\\
 &=k\(\frac14c_4^2+o(1)\)\delta^2c_4^{-1} \int\limits_{ B(0,\frac{\vartheta}{\delta })}U(y)(y\cdot \nabla U(y)+U(y))dy\\
 &=k\(\frac14c_4^2+o(1)\)\left(-\frac{\delta^2}{c_4}\int\limits_{B(0,\frac{\vartheta}{\delta })}U^2dy+\frac{\delta^2}{2c_4} \int\limits_{\partial B(0,\frac{\vartheta}{\delta })}U^2\langle y, \nu\rangle dy\right)\\
&={\mathfrak c}_2\delta^2(\ln(\delta)+\mathcal O(1)),\end{aligned}$$
where   $\mathfrak c_2:= \frac14c_4^3k>0.$
Furthermore, by \eqref{err2},
$$\begin{aligned} \bigg|\int\limits_{B(\xi_1,\vartheta)} (U^2(x)-U^2(\xi_1))U_{\delta,\xi_1}Z_{\delta,\xi_1}dx\bigg|&\le C \int\limits_{B(\xi_1,\vartheta)}
|x-\xi_1|{\delta^2\over\(\delta^2+|x-\xi_1|^2\)^2}dx =\mathcal O(\delta^2),\end{aligned}$$
 and  
\begin{equation*}
\begin{aligned}
\bigg|\int\limits_{\R^4\setminus B(\xi_1,\vartheta)} U^2 U_{\delta, \xi_1}Z_{\delta,\xi_1}dx\bigg|&\le C\Big\|U^2\sum\limits_{i=1}^k U_{\delta, \xi_i}\Big\|_{L^{\frac43}(\R^4)}\bigg(\int\limits_{\R^4\setminus B(\xi_1,\vartheta)}Z_{\delta,\xi_1}^4dx\bigg)^{\frac14}\\
&=\mathcal O(\delta^2).
\end{aligned}
\end{equation*}
Then \eqref{i2} follows.
\end{proof}
We can now complete the proof of Theorem \ref{thm01}.
\begin{proof}[Proof of Theorem \ref{thm01}] By Proposition \ref{prop3} and Proposition \ref{c0-esti}, it is enough to fix the parameter $\delta\in\(0,e^{-\frac{1}{\sqrt{|\beta|}}}\)$
such that ${c}_0(\delta)=0$, given in \eqref{cod}. In fact, we can choose
\begin{equation*}
\delta ={e^{-d_\beta}}\quad  \hbox{with}\quad  d_\beta=\frac1{|\beta|}\frac{\mathfrak a}{\mathfrak b}+o(1)>0,
\end{equation*}
so that 
\begin{equation*}-\mathfrak a-\mathfrak b{ \beta}  d_\beta+  o\(1\)=-\mathfrak a+\mathfrak b{ \beta}  \ln(\delta)+  o\(1\)=0.\end{equation*}
\end{proof}

\section{The general case $m\geq 3$}\label{sec4}
Consider the general system with $m=q+1$ components, $q\in \mathbb{N}$, 
\begin{equation}\label{1}-\Delta u_i= u_i^3+ \beta_{ij}\sum\limits_{j=1\atop i\not=j}^{q+1} u_i u_j^2\quad \hbox{in}\ \mathbb R^4, \quad i=1,...,q+1,\end{equation}
where $\beta_{ij}$ are coupling constants. Inspired by Section \ref{sec2}, we would like to find an approximation that will allow us to transform \eqref{1} into a system of two components, to perform a Lyapunov-Schmidt reduction there. Using \cite{dPMPP2} as starting point, we denote
$$ {\mathscr T}_\theta:=\(\begin{matrix}\cos\theta&-\sin\theta&0&0\\
\sin\theta&\cos\theta&0&0\\
0&0&\cos\theta&\sin\theta\\
0&0&-\sin\theta&\cos\theta \\
 \end{matrix}\).$$
Notice that
$${\mathscr T}_0=I,\quad {\mathscr T}_{\theta_1}\circ {\mathscr T}_{\theta_2}={\mathscr T}_{\theta_1+\theta_2},\quad {\mathscr T}_{\theta}^{-1}={\mathscr T}_{-\theta},\quad {\mathscr T}_{\theta+\pi}=-{\mathscr T}_\theta,\quad \text{~for~}\theta\in[0, 2\pi],$$
and we set
$${\mathscr T}_i:={\mathscr T}_{\frac{(i-1)\pi}q},\quad i=1,\ldots, q.$$
Assume that the constants $\beta_{ij}$ in \eqref{1} satisfy \eqref{beta} with $m=q+1$, i.e.
$$\beta_{ij} =\alpha\quad \hbox{and}\quad
\beta_{i(q+1)}=\beta_{(q+1)j} =\beta\ \hbox{if}\ i,j=1,\ldots, q,$$
for certain $\alpha,\beta\in \R$, with $\beta<0$, and consider the system 
\begin{equation}\label{2222}\left\{\begin{aligned}
&-\Delta u=u^3+\beta u  \sum\limits_{r=1}^qv_r^2\quad \hbox{in}\ \mathbb R^4,\\
&-\Delta v=v^3+\beta vu^2+\alpha v \sum\limits_{r=2}^qv_r^2\quad \hbox{in}\ \mathbb R^4,
\end{aligned}\right.
\end{equation}
where 
\begin{align*} 
&v_r(x):=v({\mathscr T}_r x)\ \hbox{for any}\ r=1,\dots,q.
\end{align*}
Thus, if $(u,v)$ solves \eqref{2222} and satisfies 
\begin{align}\label{42}
  &u(x)=u({\mathscr T}_rx)\ \hbox{for any}\  r=2,\dots,q,\\
&v(x)=v(-x),\label{41}
 \end{align}
 then $(u_1,\dots, u_{q+1})$, with 
\begin{align*}
&u_i(x)=v({\mathscr T}_i x)\ \hbox{for any}\ i=1,\dots,q\quad
 \hbox{and}\quad u_{q+1}(x)=u(x),\label{5}
 \end{align*}
solves \eqref{1}. Indeed, for every $i=1,\dots,q$,
$$\begin{aligned}-\Delta u_i(x)&=-\Delta v({\mathscr T}_i x)=\Big(v^3+\beta vu^2+\alpha v \sum\limits_{r=2}^qv_r^2\Big)({\mathscr T}_i x)\\
&=u_i^3(x)+\beta u_i(x)u^2({\mathscr T}_ix)+\alpha u_i(x) \sum\limits_{r=2}^qv_r^2({\mathscr T}_ix)\end{aligned}$$
 and
 $$\begin{aligned}\sum\limits_{r=2}^qv_r^2({\mathscr T}_ix)&=\sum\limits_{r=2}^qv^2({\mathscr T}_r\circ {\mathscr T}_i\ x)=\sum\limits_{r=2}^qv^2\Big({\mathscr T}_{\frac{(r+i-2)\pi}q}x\Big)=\sum\limits_{j=i+1}^{q+i-1}v^2\Big({\mathscr T}_{\frac{(j-1)\pi}q}x\Big)\\ 
 & =\sum\limits_{j=i+1}^{q}\underbrace{v^2\Big({\mathscr T}_{\frac{(j-1)\pi}q}x\Big)}_{=u_j^2(x)}+\sum\limits_{j=q+1}^{q+i-1}v^2\Big({\mathscr T}_{\frac{(j-1)\pi}q}x\Big)\\
 &=\sum\limits_{j=i+1}^{q} u_j^2(x) +\sum\limits_{\ell=1}^{i-1}\underbrace{v^2\Big({\mathscr T}_{\frac{(\ell+q-1)\pi}q}x\Big)}_{=v ^2\Big(-{\mathscr T}_{\frac{(\ell -1)\pi}q}x\Big)=u_\ell^2(x)} =\sum\limits_{j=1\atop j\not=i}^qu_j^2(x).\end{aligned}$$
Thus, if we find a solution $(u,v)$ to \eqref{2222} satisfying \eqref{42}, \eqref{41}, we will have obtained a solution to the general system \eqref{1}. We will search for them in the form
\begin{equation*}u=U+\phi,\qquad v=\underbrace{\sum\limits_{\ell=1}^kU_{\delta,  \tilde\xi_\ell}}_{\tilde V}+\psi,\ \end{equation*}
 where   
\begin{equation*}  \tilde\xi_\ell:= \frac{\rho}{\sqrt2}\Big( \cos\frac{2\pi(\ell-1)} k, \sin\frac{2\pi(\ell-1)} k, \cos\frac{2\pi(\ell-1)} k, \sin\frac{2\pi(\ell-1)} k\Big),\quad \ell=1,\dots,k,
\end{equation*}
$U_{\delta, \tilde{\xi}_\ell}$ is given in \eqref{bubble}, and $\delta,\rho>0$ such that
\begin{equation*} \delta^2+\rho^2=1.\end{equation*}
Then, for every $r=2,\dots,q$,
$$v_r=\underbrace{\sum\limits_{\ell=1}^kU_{\delta, \tilde \xi_\ell^r}}_{\tilde V_r}+\psi_r,\quad  U_{\delta,\tilde\xi_\ell^r}(x):=U_{\delta,\tilde\xi_\ell}({\mathscr T}_rx),\qquad  \psi_r(x):=\psi(\mathscr T_rx),$$
and $$\begin{aligned}\tilde \xi_\ell^r:={\mathscr T}_r^{-1}\tilde\xi_\ell=\frac{\rho}{\sqrt2} &\Big(\cos\Big(-\frac{(r-1)\pi}q+ \frac{2\pi(\ell-1)} k\Big) ,\sin\Big(-\frac{(r-1)\pi}q+ \frac{2\pi(\ell-1)} k\Big),\\ &\cos\Big(\frac{(r-1)\pi}q+ \frac{2\pi(\ell-1)} k\Big) ,\sin\Big(\frac{(r-1)\pi}q+ \frac{2\pi(\ell-1)} k\Big)\Big).\end{aligned}$$

\begin{remark}
If $k$ is an even integer then $\tilde V$  satisfies \eqref{41} and $U$ satisfies \eqref{42}, since it is radially symmetric. Thus we will need $\phi$ and $\psi$ to satisfy \eqref{42} and \eqref{41} respectively.
\end{remark}

\begin{remark}
There exists a positive constant $c$ such that
\begin{equation}\label{dist}|\tilde\xi_i^r-\tilde\xi^s_j|^2=2\Big(1-\cos\frac{(r-s)\pi}q\cos\frac{(i-j)2\pi}k\Big)\geq c>0 \ \hbox{for any}\ r\neq s.\end{equation}
Similarly to \eqref{estA}, it also holds\begin{equation}\label{estB}
\sum_{j=2}^k {1\over |\tilde\xi_1 - \tilde\xi_j|^{2}} = A  k^2(1+o(1)),\quad \text{with } A\text{~is a positive constant}.
\end{equation}
\end{remark}

\begin{remark}
It is not necessary to assume $\alpha<0$.
\end{remark}
Let us identify the invariances that we will need for the functional setting. Given $f\in \mathcal{D}^{1,2}(\R^4)$, let us consider:
\smallskip

$\bullet$ Evenness in both $x_2$ and $x_4$ coordinates, i.e., 
\begin{equation}\label{24}
f(x_1,x_2,x_3,x_4)=f(x_1,-x_2,x_3,-x_4).
\end{equation}

$\bullet$ Invariance under interchanging the $(x_1,x_2)$ and $(x_3,x_4)$ coordinates, i.e.,
\begin{align}f(x_1,x_2,x_3,x_4) =f(x_3,x_4,x_1,x_2).\label{e611}\end{align}

$\bullet$ Invariance under the linear tranformation $\mathscr R_k$,
  \begin{align} f(x)=f(\mathscr R_k   x),\quad 
 \label{e62}
  \mathscr R_k:= \(\begin{matrix}
 \cos\frac{2\pi}k& -\sin\frac{2\pi}k&0&0\\
 \sin\frac{2\pi}k&\cos\frac{2\pi}k&0&0\\
0&0&\cos\frac{2\pi}k&- \sin\frac{2\pi}k\\
0& 0&\sin\frac{2\pi}k& \cos\frac{2\pi}k\\
 \end{matrix}\).\end{align}
 
 $\bullet$ Invariance under the transformation ${\mathscr T}_r$, i.e.,
\begin{equation}\label{e63}f(x)=f({\mathscr T}_r x)\quad \hbox{for any}\ r=2,\dots,q.\end{equation}
Notice that, for even $k$, applying \eqref{e62} a total of $k/2$ times (that is, doing a rotation of $\pi$-angle) we obtain
$$f(x_1,x_2,x_3,x_4)= f(-x_1,-x_2,-x_3,-x_4).
$$

Due to the special symmetries needed to reduce the system to two equations, we will need $\phi$ and $\psi$ to belong to different spaces. We will search for them respectively in the spaces
\begin{equation*}\begin{split}
\tilde{X}_1&:=\{f\in \mathcal D^{1,2} (\mathbb R^4): \hbox{ $f$ satisfies}\ \eqref{24}, \eqref{e611}, \eqref{e62}, \eqref{e63} \hbox{ and } \eqref{kel}\},\\
\tilde{X}_2&:=\{f\in \mathcal D^{1,2} (\mathbb R^4): \hbox{  $f$ satisfies}\ \eqref{24},  \eqref{e611}, \eqref{e62} \hbox{ and } \eqref{kel}\}.
\end{split}\end{equation*}
We define
$$\tilde{ K}:=\tilde X_2\cap \mathtt{span} \{\tilde Z\},\quad \tilde Z:=\sum\limits_{i=1}^kZ_{\delta,\tilde\xi_i}^0,\quad \tilde{K}^\perp:=\left\{\psi\in \tilde X_2:\, \langle\psi,\tilde Z\rangle=0\right\},$$
and the orthogonal projections 
$$\tilde{\Pi}:\tilde X_1\times \tilde X_2\to \tilde X_1\times \tilde{K},\quad \tilde{\Pi}^\perp:\tilde X_1\times \tilde X_2\to \tilde X_1\times \tilde{K}^\perp,$$
where $Z^0_{\delta, \tilde\xi_i}$ is defined according to \eqref{z0}. Thus the system \eqref{2} can be rewritten as 
\begin{equation}\label{genpi} \tilde\Pi \{\bs{\tilde{\mathcal L}}(\phi, \psi)-\bs{\tilde{\mathcal E}}-\bs{\tilde{\mathcal N}}(\phi, \psi)\}=0,
\end{equation}
\begin{equation}\label{sysgen}
\tilde\Pi^{\perp}\{\bs{\tilde{\mathcal L}}(\phi, \psi)-\bs{\tilde{\mathcal E}}-\bs{\tilde{\mathcal N}}(\phi, \psi)\}=0,
\end{equation}
where the linear operator $\bs{\tilde{\mathcal L}}=(\tilde{\mathcal L_1},\tilde{\mathcal L_2})$ is defined by
\begin{equation}\label{lineargen}\left\{\begin{aligned}
&\tilde{\mathcal L_1}(\phi, \psi):= \phi-(-\Delta)^{-1}(3U^2\phi)\quad \text{~in~}\R^4,\\
&\tilde{\mathcal L_2}(\phi, \psi):=  \psi-(-\Delta)^{-1}(3\tilde V^2\psi )\quad \text{~in~}\R^4,
\end{aligned}\right.\end{equation}
the  error term $\bs{\tilde{\mathcal E}}=(\tilde{\mathcal E_1},\tilde{\mathcal E_2})$ by
$$\left\{\begin{aligned}
&\tilde{\mathcal E_1}:=(-\Delta)^{-1}(\beta U\sum_{r=1}^q\tilde V_r^2)\quad \text{~in~}\R^4,\\
&\tilde{\mathcal E_2}:=(-\Delta)^{-1}(\Delta \tilde V+\tilde V^3+\beta U^2 \tilde V+\alpha \tilde V\sum_{r=2}^q\tilde V_r^2)\quad \text{~in~} \R^4, 
 \end{aligned}\right.$$
and the non-linear term $\bs{\tilde{\mathcal N}}(\phi, \psi)=(\tilde{\mathcal N_1}(\phi, \psi),\tilde{\mathcal N_2}(\phi, \psi))$ is given by
$$\left\{\begin{aligned}\tilde{\mathcal N_1}(\phi, \psi):=&\,(-\Delta)^{-1}[\phi^3+3U\phi^2+\beta(2\phi\sum_{r=1}^q\tilde V_r\psi_r+\phi\sum_{r=1}^q\psi_r^2+U\sum_{r=1}^q\psi_r^2)\\&+\beta \sum_{r=1}^q\tilde V_r^2\phi+2\beta U\sum_{r=1}^q\tilde V_r\psi_r]\text{~in~}\R^4, \\
\tilde{\mathcal N_2}(\phi, \psi):=&\,(-\Delta)^{-1}[\psi^3+3\tilde V\psi^2+\beta(\psi\phi^2+2U\psi\phi+\tilde V\phi^2)+\alpha(2\psi\sum_{r=2}^q\tilde V_r\psi_r+\psi\sum_{r=2}^q\psi_r^2
\\&+\tilde V\sum_{r=2}^q\psi_r^2)+\beta (U^2\psi+2 U\tilde V\phi)+\alpha(\sum_{r=2}^q\tilde V_r^2\psi+2\tilde V\sum_{r=2}^q\tilde V_r\psi_r)]\ \text{~in~} \R^4.\end{aligned}\right.$$
We can prove a linear solvability result in the spirit of Proposition \ref{prop1}.
\begin{proposition}\label{prop4.1}
Assume $k\geq 2$ even.  There exist $c>0$ and  $\beta_0<0$ such that for each $\beta \in [\beta_0, 0)$, $\delta\in\big(0,e^{-\frac{1}{\sqrt{|\beta|}}}\big),$  
\begin{equation}\label{apriori33}\|\tilde{\Pi}^{\perp}\bs{\tilde{\mathcal L}}(\phi,\psi)\|\ge c\|(\phi,\psi)\|, \quad \forall\, (\phi, \psi)\in \tilde X_1\times \tilde{K}^{\perp}.\end{equation}
where $\bs{\tilde{\mathcal L}}$ is defined as in \eqref{lineargen}. In particular, the inverse operator $(\tilde{\Pi}^{\perp}\bs{\tilde{\mathcal L}})^{-1}: \tilde X_1\times \tilde{K}^\perp\to \tilde X_1\times \tilde{K}^\perp$ exists and is continuous.
\end{proposition}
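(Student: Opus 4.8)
The plan is to run, essentially verbatim, the three–step contradiction argument of Proposition \ref{prop1}, with the invariances \eqref{sim1}--\eqref{kel} replaced by those defining $\tilde X_1,\tilde X_2$ and the coordinate polygon \eqref{bas03} replaced by the ``diagonal'' polygon with vertices $\tilde\xi_\ell$. First, since $U^2,\tilde V^2\in L^2(\mathbb R^4)$ for fixed $k$, the map $(\phi,\psi)\mapsto\tilde\Pi^\perp\big((-\Delta)^{-1}(3U^2\phi),(-\Delta)^{-1}(3\tilde V^2\psi)\big)$ is compact on $\tilde X_1\times\tilde K^\perp$ (as in \cite[Lemma 2.2]{BS}), so $\tilde\Pi^\perp\bs{\tilde{\mathcal L}}$ is a compact perturbation of the identity and, by Fredholm's alternative, invertibility with bounded inverse follows once \eqref{apriori33} is proved. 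To prove \eqref{apriori33} we argue by contradiction: suppose there are $\beta_n\to0$, $\delta_n\in(0,e^{-1/\sqrt{|\beta_n|}})$, $(\phi_n,\psi_n)\in\tilde X_1\times\tilde K^\perp$ with $\|\phi_n\|+\|\psi_n\|=1$, $\tilde\Pi^\perp\bs{\tilde{\mathcal L}}(\phi_n,\psi_n)=(f_n,g_n)$ and $\|f_n\|+\|g_n\|=o(1)$; as in \eqref{sis0} there is a multiplier $t_n$ with $-\Delta\phi_n-3U^2\phi_n=-\Delta f_n$ and $-\Delta\psi_n-3\tilde V_n^2\psi_n=-\Delta(g_n+t_n\tilde Z_n)$, where $\tilde V_n,\tilde Z_n$ are the $n$-dependent versions of $\tilde V,\tilde Z$. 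In \textbf{Step 1} one shows $\phi_n\rightharpoonup0$ in $\mathcal D^{1,2}(\mathbb R^4)$ and strongly in $L^p_{loc}$, $p\in[2,4)$: the weak limit $\phi\in\tilde X_1$ solves $-\Delta\phi=3U^2\phi$, hence $\phi\in\mathtt{span}\{Z^0,\dots,Z^4\}$, and $\langle\phi,Z^j\rangle=\int_{\mathbb R^4}3U^2Z^j\phi=0$ for every $j$ because: \eqref{kel} gives $\langle\phi,Z^0\rangle=-\langle\phi,Z^0\rangle$; \eqref{24} gives $\langle\phi,Z^2\rangle=\langle\phi,Z^4\rangle=0$; \eqref{e62} with $k\ge2$ gives $\big(1-\cos\frac{2\pi}k\big)\langle\phi,Z^1\rangle=0$; and \eqref{e611} gives $\langle\phi,Z^3\rangle=\langle\phi,Z^1\rangle$. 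Thus $\phi=0$.

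\textbf{Step 2} is the crucial one. Setting $\tilde\psi_n(y):=\delta_n\psi_n(\delta_n y+\tilde\xi_{1n})$, we show $\tilde\psi_n\rightharpoonup0$. Since the $k$ vertices of the (single) polygon of $\tilde V_n$ are at mutual distance bounded below (cf.\ \eqref{estB}), the computation \eqref{tn}--\eqref{l3} carries over and yields $\|\tilde Z_n\|^2=A_1k+o(1)$ and $t_n\to0$; then the weak limit $\psi$ solves $-\Delta\psi=3U^2\psi$, so $\psi\in\mathtt{span}\{Z^j\}$. The delicate point is that, after rescaling about $\tilde\xi_{1n}=\frac{\rho_n}{\sqrt2}(1,0,1,0)$, the function $\tilde\psi_n$ inherits only the invariances of $\psi_n$ that fix $\tilde\xi_{1n}$, namely \eqref{24} (evenness in $(y_2,y_4)$, giving $\langle\psi,Z^2\rangle=\langle\psi,Z^4\rangle=0$) and \eqref{e611} (the $(y_1,y_2)\leftrightarrow(y_3,y_4)$ exchange, giving $\langle\psi,Z^1\rangle=\langle\psi,Z^3\rangle$). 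To annihilate $\langle\psi,Z^0\rangle$ one instead uses $\psi_n\in\tilde K^\perp$ together with the rotational symmetry \eqref{e62}, exactly as in Proposition \ref{prop1}: $0=\langle\psi_n,\tilde Z_n\rangle=3k\int_{\mathbb R^4}U_{\delta_n,\tilde\xi_{1n}}^2Z^0_{\delta_n,\tilde\xi_{1n}}\psi_n\to3k\int_{\mathbb R^4}U^2Z^0\psi$. To annihilate $\langle\psi,Z^1\rangle$ and $\langle\psi,Z^3\rangle$ one runs the Kelvin-transform identity \eqref{Kelv13} (undo the rescaling, apply \eqref{kel}, re-rescale); since now $\langle y,\tilde\xi_{1n}\rangle=\frac{\rho_n}{\sqrt2}(y_1+y_3)$, its limit reads $\int_{\mathbb R^4}\psi(y)\frac{y_1+y_3}{(1+|y|^2)^4}\,dy=0$, i.e.\ $\langle\psi,Z^1\rangle+\langle\psi,Z^3\rangle=0$, and combined with $\langle\psi,Z^1\rangle=\langle\psi,Z^3\rangle$ this forces both to vanish. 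Hence $\psi=0$.

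\textbf{Step 3}: testing the two equations with $\phi_n$ and $\psi_n$ and summing gives
$$1=\int_{\mathbb R^4}3U^2\phi_n^2+\int_{\mathbb R^4}3\tilde V_n^2\psi_n^2+o(1),$$
the $o(1)$ collecting $\int\nabla f_n\nabla\phi_n$, $\int\nabla g_n\nabla\psi_n$ and $t_n\int\nabla\tilde Z_n\nabla\psi_n$ (using $t_n\to0$, $\|\tilde Z_n\|=O(1)$), and the first integral being $o(1)$ by Step 1. For the second we use the cone trick: let $\Sigma$ be the wedge of opening $2\pi/k$ around $\tilde\xi_{1n}$ in the $2$-plane $\mathtt{span}\{(1,0,1,0),(0,1,0,1)\}$ containing the polygon, so that its $k$ images under $\mathscr R_k$ tile $\mathbb R^4$ and $\tilde V_n,\psi_n$ are $\mathscr R_k$-invariant; then $\int_{\mathbb R^4}\tilde V_n^2\psi_n^2=k\int_\Sigma\tilde V_n^2\psi_n^2$, and decomposing $\tilde V_n=U_{\delta_n,\tilde\xi_{1n}}+\sum_{\ell\ge2}U_{\delta_n,\tilde\xi_{\ell n}}$ as in \eqref{Vpsi}, the cross and far terms are $o(1)$ because $\mathtt d(\tilde\xi_{\ell n},\Sigma)\ge2\vartheta$ for $\ell\ge2$, while $\int_\Sigma U_{\delta_n,\tilde\xi_{1n}}^2\psi_n^2=\int_{\overline\Sigma}U^2\tilde\psi_n^2\to0$ by Step 2 and the decay of $U$. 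This contradicts $\|\phi_n\|+\|\psi_n\|=1$ and proves \eqref{apriori33}.

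The hard part is \textbf{Step 2}. In Proposition \ref{prop1} the polygon lies in a coordinate $2$-plane, so the rescaled limit inherits enough symmetry on its own and a single Kelvin computation pins down the last translation mode. Here the vertices $\tilde\xi_\ell$ point in the diagonal direction $(1,0,1,0)$, so after rescaling about $\tilde\xi_{1n}$ only \eqref{24} and \eqref{e611} survive; one must therefore recover orthogonality to $Z^0$ from the constraint $\psi_n\in\tilde K^\perp$ (and the rotational symmetry), and orthogonality to $Z^1,Z^3$ from the Kelvin identity \emph{only after} combining it with the interchange invariance \eqref{e611} — the Kelvin argument by itself controls merely $\langle\psi,Z^1+Z^3\rangle$. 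This is precisely why \eqref{e611} is built into the space $\tilde X_2$.
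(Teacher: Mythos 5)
Your proposal follows the paper's own proof of Proposition \ref{prop4.1} essentially step by step: the same contradiction scheme, the same orthogonality bookkeeping in Step 1, and in Step 2 the same two ingredients that the paper uses — $\psi_n\in\tilde K^\perp$ to kill the $Z^0$ mode after rescaling, and the Kelvin identity producing $\langle\psi,Z^1+Z^3\rangle=0$, which is then split by \eqref{e611} — so the core argument matches. The only noticeable divergence is cosmetic and lies in Step 3: you describe the tiling region as ``the wedge of opening $2\pi/k$ around $\tilde\xi_{1n}$ in the $2$-plane $\mathtt{span}\{(1,0,1,0),(0,1,0,1)\}$,'' which as written is a $2$-dimensional set and cannot tile $\R^4$; what you presumably mean is the $4$-dimensional region of points whose orthogonal projection onto that plane has angular coordinate in $[-\pi/k,\pi/k)$ (or, even more simply, the set $\{\theta_1\in[-\pi/k,\pi/k)\}$), either of which does satisfy the two required properties (exactly one $\mathscr R_k$-image per orbit, and $\mathtt d(\tilde\xi_\ell,\Sigma)\ge c>0$ for $\ell\ge2$). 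The paper instead builds the explicit cone $\tilde\Sigma_k$ via the three angular blocks $\Lambda_k^1,\Lambda_k^2,\Lambda_k^3$, exploiting more of the symmetry group at once; both constructions yield the crucial estimate $\int_{\R^4}\tilde V_n^2\psi_n^2\le Ck\int_{\Sigma}\tilde V_n^2\psi_n^2$, so the choice is a matter of taste, and once the region is precisely defined your version is, if anything, slightly more economical.
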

\begin{proof}
The scheme of the proof is similar to Proposition \ref{prop1}, so we will only write in detail the differences, mainly related to the orthogonalities with the elements of the kernel. 

Let us start proving \eqref{apriori33} by contradiction. Suppose that there exist $\beta_n\to 0$, $\phi_n\in \tilde X_1, \psi_{n} \in    \tilde{X}_2$ satisfying 
$$\tilde{\Pi}^{\perp}\bs{\tilde{\mathcal L}}(\phi_{n}, \psi_{n})=(h_{1n}, h_{2n}) \in \tilde X_1\times \tilde{K}^\perp$$
and
$$\| \phi_{n}\|+\|\psi_{n}\|=1, \quad  \|{ h_{1n}}\| +\|{ h_{2n}}\| \to 0\mbox{ as }n\to +\infty.$$
\textbf{Step 1:} Proceeding as before, one can see that $\phi_n\rightharpoonup \phi^*$ weakly in $\mathcal{D}^{1,2}(\R^4)$. To prove that indeed $\phi^*=0$ we see that
$$\tilde I_{in}:=\int\limits_{\mathbb R^4}U^2Z^i\phi_{n}(y)dy=0, \quad i=0,1,...,4,$$
with $Z^i$ given in \eqref{lin02}. Indeed, using \eqref{24},
$$\tilde I_{2n}=\int\limits_{\mathbb R^4} c_4^2{y_2\over(1+|y|^2)^4}\phi_n(y)dy=0=\int\limits_{\mathbb R^4} c_4^2{y_4\over(1+|y|^2)^4}\phi_n(y)dy=\tilde I_{4n}.$$
Moreover, by \eqref{e62},
\begin{align*}\tilde I_{1n}&=\int\limits_{\mathbb R^4} c_4^2{y_1\over(1+|y|^2)^4}\phi_n(y)dy =\int\limits_{\mathbb R^4} c_4^2{\cos{2\pi\over k}z_1\over(1+|z|^2)^4} \phi_n(\mathscr R_{k}(z))dz\\
&= \cos{2\pi\over k}\int\limits_{\mathbb R^4} U^2(z) \phi_n(z)Z^1(z)dz=\cos{2\pi\over k} \tilde I_{1n},
\end{align*}
and, since $k\ge2$, necessarilly  $\tilde I_{1n}=0$. Analogously it can be seen that $\tilde I_{3n}=0$, and $\tilde I_{0n}$ follows by \eqref{kel}. With this we can conclude that $\phi^*=0$.

\noindent\textbf{Step 2:} Setting 
$$\tilde\psi _{n}(y):=\delta_n\psi_n(\delta_n y+\tilde\xi_{1n })$$
where $\tilde\xi_{1n}:=\frac{\rho_n}{\sqrt{2}}(1, 0, 1, 0),$ and proceeding as in Proposition \ref{prop1}
it can be seen that (up to a subsequence) 
\begin{equation*}\tilde\psi_n\rightharpoonup\tilde\psi\ \hbox{weakly in}\ \mathcal D^{1,2}(\mathbb R^4)\ \hbox{and strongly in}\ L^p_{loc}(\mathbb R^4)\ \hbox{for any}\ p\in [2,4),\end{equation*}
as $n\to+\infty.$ The goal is to prove that actually $\tilde\psi=0$, what will follow if we prove 
\begin{equation*}\tilde J_{in}:=\int\limits_{ \R^4}   \tilde{\psi}_n U ^2 Z^idx=0,\quad i=1,...,4,\end{equation*}
since $\psi_n\in \tilde K^{\perp}$. By definition $\tilde\psi_n$ inherits the symmetry \eqref{24}, and thus $\tilde J_{2n}=\tilde J_{4n}=0$.
For the cases $i=1,3$ we will use the Kelvin invariance \eqref{kel}.  Indeed, proceeding as in \eqref{Kelv13} it follows
 \begin{equation*}\begin{aligned}\int\limits_{  \R^4} \tilde \psi_n(y){y_1\over(1+|y|^2)^4}dy=& \int\limits_{  \R^4}  \tilde\psi_n(y){  y_1 \over(1+|y|^2)^4}dy +\frac{\rho_n}{ \delta_n}\int\limits_{\R^4} \tilde \psi_n(y){ 1-|\delta_n y+\tilde\xi_{1n}|^2\over(1+|y|^2)^4}dy,
\end{aligned}
\end{equation*}
and then using that $\psi_n\in \tilde K^\perp$,
$$\begin{aligned}0&=\int\limits_{\R^4} \tilde  \psi_n(y){ 1-|\delta_n y+\tilde\xi_{1n}|^2\over(1+|y|^2)^4}dy\\
&= \delta_n^2 \int\limits_{\R^4}  \tilde \psi_n(y) {1-  |y|^2\over(1+|y|^2)^4}dy-2\delta_n\rho_n \int\limits_{ \R^4}  \tilde \psi_n(y){  y_1 +y_3\over(1+|y|^2)^4}dy\\
&=-2\delta_n\rho_n \int\limits_{ \R^4} \tilde  \psi_n(y){  y_1 +y_3\over(1+|y|^2)^4}dy.\end{aligned}$$
Furthermore, since $\psi_n$ satisfies \eqref{e611}, we deduce that
$$\int\limits_{ \R^4} \tilde  \psi_n(y){  y_1\over(1+|y|^2)^4}dy=\int\limits_{ \R^4} \tilde  \psi_n(y){ y_3\over(1+|y|^2)^4}dy,$$
and therefore necessarily $\tilde J_{1n}=\tilde J_{3n}=0,$
which ends this step.

\noindent \textbf{Step 3:}  The delicate point is to prove 
$$\int_{\R^4}\tilde V_n^2\psi_n^2\to 0\quad \mbox{ as }n\to\infty.$$
We consider the cone $\tilde\Sigma_k$ defined as
\begin{equation*}\tilde \Sigma_k:=\left\{(r_1\cos\theta_1,r_1\sin\theta_1, r_2\cos\theta_2,r_2\sin\theta_2)\ :\ (\theta_1,\theta_2)\in\Lambda_k,\ r_1,r_2\geqslant 0\right\},\end{equation*}
with $\Lambda_k:=\Lambda^1_k\cup\Lambda^2_k\cup\Lambda_k^3\subset[-\pi,\pi]\times[-\pi,\pi]$, 
\begin{equation*}\begin{aligned}\Lambda_k^1:=&\bigg\{\left[-\frac\pi k,\frac\pi k\right]\cup\left[\pi-\frac\pi k,\pi+\frac\pi k\right]\bigg\}\times\bigg\{\left[-\frac\pi k,\frac\pi k\right]\cup\left[\pi-\frac\pi k,\pi+\frac\pi k\right]\bigg\},\\
\Lambda^2_k:=&\bigg\{\left[-\frac\pi k,\frac\pi k\right]\cup\left[\pi-\frac\pi k,\pi+\frac\pi k\right]\bigg\}\times \bigg\{\left[-\pi+\frac\pi k,-\frac\pi k\right]\cup\left[\frac\pi k,\pi-\frac\pi k\right]\bigg\},\\
\Lambda^3_k:=& \bigg\{\left[-\pi+\frac\pi k,-\frac\pi k\right]\cup\left[\frac\pi k,\pi-\frac\pi k\right]\bigg\} \times\bigg\{\left[-\frac\pi k,\frac\pi k\right]\cup\left[\pi-\frac\pi k,\pi+\frac\pi k\right]\bigg\}.
\end{aligned}\end{equation*}
 It could be useful to visualize the symmetry via the following picture for
$k=6$. Here  
$$\Lambda_k^1=\mbox{
 \begin{tikzpicture}[samples=100, domain=0:0.5]
\draw[help lines, color=black, step=0.5cm ]
(0,0) grid (0.5,0.5);
\filldraw[pattern=grid]
(0,0)--(0,0.5)--(0.5,0.5)--(0.5,0);
\end{tikzpicture}}
 \qquad \Lambda_k^2=\mbox{
\begin{tikzpicture}[samples=100, domain=0:0.5]
\draw[help lines, color=black, step=0.5cm ]
(0,0) grid (0.5,0.5);
\filldraw[pattern=vertical lines]
(0,0)--(0,0.5)--(0.5,0.5)--(0.5,0);
\end{tikzpicture}}
\qquad \Lambda_k^3=\mbox{
\begin{tikzpicture}[samples=100, domain=0:0.5]
\draw[help lines, color=black, step=0.5cm ]
(0,0) grid (0.5,0.5);
\filldraw[pattern=horizontal lines]
(0,0)--(0,0.5)--(0.5,0.5)--(0.5,0);
\end{tikzpicture}}.$$
Any function   invariant with respect to the isometries \eqref{24}, \eqref{e611}, \eqref{e62} and \eqref{e63}    takes the same value in the squares  having the  same colors.

\begin{center}
\begin{tikzpicture}[samples=500,domain=-3:3]
\draw[help lines, color=black, step=0.5cm ] 
(-3, -3)  grid (3, 3);
 
\filldraw[fill= red]  
(-0.5,-0.5)--(-0.5, 0.5)--  (0.5,0.5)--(0.5,-0.5)  ;
\filldraw[fill= red]  
(-0.5,-0.5)--(-0.5, -1.5)--  (-1.5,-1.5)--(-1.5,-0.5)  ;
\filldraw[fill= red]  
(-1.5,-1.5)--(-1.5, -2.5)--  (-2.5,-2.5)--(-2.5,-1.5)  ;
 \filldraw[fill= red]  
(0.5,0.5)--(0.5, 1.5)--  (1.5,1.5)--(1.5,0.5)  ;
\filldraw[fill= red]  
(1.5,1.5)--(1.5, 2.5)--  (2.5,2.5)--(2.5,1.5)  ;
\filldraw[fill= red]  
(2.5,2.5)--(2.5, 3)--  (3,3)--(3,2.5)  ;
\filldraw[fill= red]  
(-2.5,-2.5)--(-2.5, -3)--  (-3,-3)--(-3,-2.5)  ;

\filldraw[fill= blue]  
(-0.5,-0.5)--(-0.5, -1.5)-- (0.5,-1.5) -- (0.5,-0.5) ;
\filldraw[fill= blue]  
(0.5,0.5)--(0.5, -0.5)-- (1.5,-0.5) -- (1.5,0.5) ;
\filldraw[fill= blue]  
(1.5,1.5)--(1.5, 0.5)-- (2.5,0.5) -- (2.5,1.5) ;
\filldraw[fill= blue]  
(2.5,2.5)--(2.5, 1.5)-- (3,1.5) -- (3,2.5) ;
\filldraw[fill= blue]  
(-1.5,-1.5)--(-1.5, -2.5)-- (-0.5,-2.5) -- (-0.5,-1.5) ;
\filldraw[fill= blue]  
(-2.5,-2.5)--(-2.5, -3)-- (-1.5,-3) -- (-1.5,-2.5) ;

\filldraw[fill= blue]  
(-0.5,0.5)--(-0.5, 1.5)--  (0.5,1.5)--(0.5,0.5)  ;
\filldraw[fill= blue]  
(0.5,1.5)--(0.5, 2.5)--  (1.5,2.5)--(1.5,1.5)  ;
\filldraw[fill= blue]  
(1.5,2.5)--(1.5, 3)--  (2.5,3)--(2.5,2.5)  ;
\filldraw[fill= blue]  
(-1.5,-0.5)--(-1.5, 0.5)--  (-0.5,0.5)--(-0.5,-0.5)  ;
\filldraw[fill= blue]  
(-2.5,-1.5)--(-2.5, -0.5)--  (-1.5,-0.5)--(-1.5,-1.5)  ;
\filldraw[fill= blue]  
(-3,-2.5)--(-3, -1.5)--  (-2.5,-1.5)--(-2.5,-2.5)  ;

\filldraw[fill= green]  
(-0.5,1.5)--(-0.5, 2.5)--  (0.5,2.5)--(0.5,1.5)  ;
\filldraw[fill= green]  
(0.5,2.5)--(0.5, 3)--  (1.5,3)--(1.5,2.5)  ;
\filldraw[fill= green]  
(-1.5,0.5)--(-1.5, 1.5)--  (-0.5,1.5)--(-0.5,0.5)  ;
\filldraw[fill= green]  
(-2.5,-0.5)--(-2.5, 0.5)--  (-1.5,0.5)--(-1.5,-0.5)  ;
\filldraw[fill= green]  
(-3,-1.5)--(-3, -0.5)--  (-2.5,-0.5)--(-2.5,-1.5)  ;
\filldraw[fill= green]  
(-0.5,-2.5)--(-0.5, -1.5)--  (0.5,-1.5)--(0.5,-2.5)  ;
\filldraw[fill= green]  
(-1.5,-3)--(-1.5, -2.5)--  (-0.5,-2.5)--(-0.5,-3)  ;
\filldraw[fill= green]  
(0.5,-1.5)--(0.5, -0.5)--  (1.5,-0.5)--(1.5,-1.5)  ;
\filldraw[fill= green]  
(1.5,-0.5)--(1.5, 0.5)--  (2.5,0.5)--(2.5,-0.5)  ;
\filldraw[fill= green]  
(2.5,0.5)--(2.5, 1.5)--  (3,1.5)--(3,0.5)  ;

\filldraw[fill= orange]  
(0.5,-2.5)--(0.5, -1.5)--  (1.5,-1.5)--(1.5,-2.5)  ;
\filldraw[fill= orange]  
(-0.5,-3)--(-0.5, -2.5)--  (0.5,-2.5)--(0.5,-3)  ;
\filldraw[fill= orange]  
(1.5,-1.5)--(1.5, -0.5)--  (2.5,-0.5)--(2.5,-1.5)  ;
\filldraw[fill= orange]  
(2.5,-0.5)--(2.5, 0.5)--  (3,0.5)--(3,-0.5)  ;
\filldraw[fill= orange]  
(-0.5,2.5)--(-0.5, 3)--  (0.5,3)--(0.5,2.5)  ;
\filldraw[fill= orange]  
(-1.5,1.5)--(-1.5, 2.5)--  (-0.5,2.5)--(-0.5,1.5)  ;
\filldraw[fill= orange]  
(-2.5,0.5)--(-2.5, 1.5)--  (-1.5,1.5)--(-1.5,0.5)  ;
\filldraw[fill= orange]  
(-3,-0.5)--(-3, 0.5)--  (-2.5,0.5)--(-2.5,-0.5)  ;

\filldraw[fill= green]  
(-2.5,1.5)--(-2.5, 2.5)--  (-1.5,2.5)--(-1.5,1.5)  ;
\filldraw[fill= green]  
(-3,0.5)--(-3, 1.5)--  (-2.5,1.5)--(-2.5,0.5)  ;
\filldraw[fill= green]  
(-1.5,2.5)--(-1.5, 3)--  (-0.5,3)--(-0.5,2.5)  ;
\filldraw[fill= green]  
(2.5,-1.5)--(2.5, -2.5)--  (1.5,-2.5)--(1.5,-1.5)  ;
\filldraw[fill= green]  
(3,-0.5)--(3, -1.5)--  (2.5,-1.5)--(2.5,-0.5)  ;
\filldraw[fill= green]  
(1.5,-2.5)--(1.5, -3)--  (0.5,-3)--(0.5,-2.5)  ;

\filldraw[fill= red]  
(-2.5,2.5)--(-2.5, 3)--  (-3,3)--(-3,2.5)  ;
\filldraw[fill= red]  
(2.5,-2.5)--(2.5, -3)--  (3,-3)--(3,-2.5)  ;

\filldraw[fill= blue]  
(1.5,-2.5)--(1.5, -3)--  (2.5,-3)--(2.5,-2.5)  ;
\filldraw[fill= blue]  
(2.5,-1.5)--(2.5, -2.5)--  (3,-2.5)--(3,-1.5)  ;

\filldraw[fill= blue]  
(-1.5,2.5)--(-1.5, 3)--  (-2.5,3)--(-2.5,2.5)  ;
\filldraw[fill= blue]  
(-2.5,1.5)--(-2.5, 2.5)--  (-3,2.5)--(-3,1.5)  ;

\begin{scope}
\filldraw[pattern=grid ]
(-0.5,-0.5)--(-0.5, 0.5)--(0.5, 0.5)--(0.5, -0.5);
\filldraw[pattern=grid ]
(-3,-0.5)--(-3, 0.5)--(-2.5, 0.5)--(-2.5, -0.5);
\filldraw[pattern=grid ]
(3,-0.5)--(3, 0.5)--(2.5, 0.5)--(2.5, -0.5);
\filldraw[pattern=grid ]
(-0.5,-3)--(-0.5, -2.5)--(0.5, -2.5)--(0.5, -3);
\filldraw[pattern=grid ]
(-0.5,3)--(-0.5, 2.5)--(0.5, 2.5)--(0.5, 3);
\filldraw[pattern=grid ]
(-3,2.5)--(-3, 3)--(-2.5, 3)--(-2.5, 2.5);
\filldraw[pattern=grid ]
(2.5,2.5)--(2.5, 3)--(3, 3)--(3, 2.5);
\filldraw[pattern=grid ]
(-3,-3)--(-3, -2.5)--(-2.5, -2.5)--(-2.5, -3);
\filldraw[pattern=grid ]
(2.5,-3)--(2.5, -2.5)--(3, -2.5)--(3, -3);

\filldraw[pattern=horizontal lines ]
(-2.5,2.5)--(-2.5,3)--(-0.5,3)--(-0.5,2.5);
\filldraw[pattern=horizontal lines ]
(0.5,2.5)--(0.5,3)--(2.5,3)--(2.5,2.5);
\filldraw[pattern=horizontal lines ]
(-2.5,-0.5)--(-2.5,0.5)--(-0.5,0.5)--(-0.5,-0.5);
\filldraw[pattern=horizontal lines ]
(0.5,-0.5)--(0.5,0.5)--(2.5,0.5)--(2.5,-0.5);
\filldraw[pattern=horizontal lines ]
(-2.5,-2.5)--(-2.5,-3)--(-0.5,-3)--(-0.5,-2.5);
\filldraw[pattern=horizontal lines ]
(0.5,-2.5)--(0.5,-3)--(2.5,-3)--(2.5,-2.5);

\filldraw[pattern=vertical lines  ]
(2.5,-2.5)--(3,-2.5)--(3,-0.5)--(2.5,-0.5);
\filldraw[pattern=vertical lines ]
(2.5,0.5)--(3,0.5)--(3,2.5)--(2.5,2.5);
\filldraw[pattern=vertical lines ]
(-0.5,-2.5)--(0.5,-2.5)--(0.5,-0.5)--(-0.5,-0.5);
\filldraw[pattern=vertical lines ]
(-0.5,0.5)--(0.5,0.5)--(0.5,2.5)--(-0.5,2.5);
\filldraw[pattern=vertical lines ]
(-2.5,-2.5)--(-3,-2.5)--(-3,-0.5)--(-2.5,-0.5);
\filldraw[pattern=vertical lines ]
(-2.5,0.5)--(-3,0.5)--(-3,2.5)--(-2.5,2.5);

\draw[pattern=grid, pattern color=blue] (-0.5,-0.5) rectangle (0.5,0.5);

\end{scope}

%
\draw[->] (-3.2,0) -- (3.5,0) node[right] {$\theta_1$};
\draw[->] (0,-3.2) -- (0,3.5) node[above] {$\theta_2$};
\draw[-] (-3.2,-3.2) -- (3.2,3.2) node[above] {$\theta_1=\theta_2$};

\end{tikzpicture}
\end{center}
 {Given a function $f\in L^1(\mathbb R^4)$  satisfying \eqref{e62} we have
$$\frac{k}{2}\bigg(2\int\limits_{\Lambda_k^1}f\,dx+\int\limits_{\Lambda_k^2}f\,dx+\int\limits_{\Lambda_k^3}f\,dx\bigg)=2\int\limits_{\R^4}f\,dx,$$
and then there exists a constant $C>0$ such that
$$\int_{\R^4}\tilde V_n^2\psi_n^2\leq Ck\int_{\tilde\Sigma_k}\tilde V_n^2\psi_n^2.$$
The contradiction arises analogously to Proposition \ref{prop1}.}
\end{proof}
To analyze the size of the error we follow the ideas in Proposition \ref{prop2}.
\begin{proposition}\label{errormc}
There exists $\beta_0<0$ such that, for every $\beta\in [\beta_0,0)$ and $\delta\in (0,e^{-\frac{1}{\sqrt{|\beta|}}})$, it holds
\begin{equation*}\|(-\Delta)\tilde{\mathcal{E}}_1\|_{L^\frac43(\mathbb R^4)}+
\|(-\Delta)\tilde{\mathcal E}_2\|_{L^\frac43(\R^4)}=\mathcal O(|\beta|\delta).
\end{equation*}
\end{proposition}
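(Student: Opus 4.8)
The plan is to adapt the argument of Proposition \ref{prop2}. Recalling that $\tilde{\mathcal E}_1=(-\Delta)^{-1}\big(\beta U\sum_{r=1}^q\tilde V_r^2\big)$ and $\tilde{\mathcal E}_2=(-\Delta)^{-1}\big(\Delta\tilde V+\tilde V^3+\beta U^2\tilde V+\alpha\tilde V\sum_{r=2}^q\tilde V_r^2\big)$, the statement amounts to bounding the $L^{\frac43}$-norms of $\beta U\sum_{r=1}^q\tilde V_r^2$ and of $\Delta\tilde V+\tilde V^3+\beta U^2\tilde V+\alpha\tilde V\sum_{r=2}^q\tilde V_r^2$, which is exactly the form needed to later run the contraction argument through $(-\Delta)^{-1}:L^{\frac43}(\R^4)\to\mathcal D^{1,2}(\R^4)$. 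The first remark I would exploit is that each $\mathscr T_r$ is orthogonal and $U$ is radial, so $U\circ\mathscr T_r=U$ and $U_{\delta,\tilde\xi_\ell^r}=U_{\delta,\tilde\xi_\ell}\circ\mathscr T_r$; changing variables $y=\mathscr T_rx$ then gives $\|U^a\tilde V_r^b\|_{L^p(\R^4)}=\|U^a\tilde V^b\|_{L^p(\R^4)}$ for all admissible exponents. Consequently every term $\beta U\tilde V_r^2$ ($r=1,\dots,q$) reduces to $\beta U\tilde V^2$, which is precisely $\bar{\mathcal E}_1$ of Proposition \ref{prop2} with the configuration $\{\xi_i\}$ replaced by the new peaks $\{\tilde\xi_\ell\}$; these are still uniformly separated and obey the asymptotics \eqref{estB}, so the splitting $\R^4=\bigcup_{\ell=1}^k B(\tilde\xi_\ell,\vartheta)\cup\mathtt{Ext}$ together with the pointwise bounds of the type \eqref{est01} carries over verbatim and yields $\|\beta U\sum_{r=1}^q\tilde V_r^2\|_{L^{\frac43}(\R^4)}=\mathcal O(|\beta|\delta)$, $q$ being fixed. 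Likewise $\Delta\tilde V+\tilde V^3=\big(\sum_\ell U_{\delta,\tilde\xi_\ell}\big)^3-\sum_\ell U_{\delta,\tilde\xi_\ell}^3$ by \eqref{e}, which is $\bar{\mathcal E}_{21}$ with the new peaks, of size $\mathcal O(\delta^2)$, and $\beta U^2\tilde V=\sum_\ell \beta U^2 U_{\delta,\tilde\xi_\ell}$ is $\bar{\mathcal E}_{22}$, of size $\mathcal O(|\beta|\delta)$.

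The genuinely new ingredient, and where I expect the only real work, is the cross-interaction $\alpha\tilde V\sum_{r=2}^q\tilde V_r^2$, which has no analogue in Section \ref{sec2} and encodes the coupling between polygons lying on \emph{different} great circles. The crucial fact here is the uniform separation \eqref{dist}: for $r\ge2$ and all $i,j$ one has $|\tilde\xi_i-\tilde\xi_j^r|^2\ge c>0$. I would therefore fix $\vartheta>0$ small, so in particular $2\vartheta<\sqrt c$ and all the balls involved are pairwise disjoint, and split $\R^4$ into the balls $B(\tilde\xi_i,\vartheta)$ around the peaks of $\tilde V$, the balls $B(\tilde\xi_j^r,\vartheta)$ around the peaks of $\tilde V_r$, and the remaining exterior. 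On $B(\tilde\xi_i,\vartheta)$ every peak of $\tilde V_r$ is at distance $\ge\sqrt c-\vartheta$, so $\tilde V_r=\mathcal O(\delta)$ there; symmetrically $\tilde V=\mathcal O(\delta)$ on each $B(\tilde\xi_j^r,\vartheta)$; and on the exterior both $\tilde V$ and $\tilde V_r$ are $\mathcal O(\delta)$, with quadratic decay at infinity. Hence in every region at least one factor contributes a uniform $\mathcal O(\delta)$; estimating the remaining factors by $\int_{B(\tilde\xi_j^r,\vartheta)}U_{\delta,\tilde\xi_j^r}^{8/3}=\mathcal O(\delta^{4/3})$ for the dominant piece (the balls around the peaks of $\tilde V_r$) and by the usual decay estimates elsewhere, one gets $\|\tilde V\tilde V_r^2\|_{L^{\frac43}(\R^4)}=\mathcal O(\delta^2)$, so $\|\alpha\tilde V\sum_{r=2}^q\tilde V_r^2\|_{L^{\frac43}(\R^4)}=\mathcal O(\delta^2)$, $q$ and $\alpha$ being fixed.

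Finally, since $\delta\in(0,e^{-1/\sqrt{|\beta|}})$ one has $\delta\le|\beta|$ for $|\beta|$ small, hence $\delta^2\le|\beta|\delta$; collecting the four bounds gives $\|(-\Delta)\tilde{\mathcal E}_1\|_{L^{\frac43}(\R^4)}+\|(-\Delta)\tilde{\mathcal E}_2\|_{L^{\frac43}(\R^4)}=\mathcal O(|\beta|\delta)$, which is the claim. In short, all the ``diagonal'' pieces collapse, via the isometries $\mathscr T_r$, onto estimates already established in Proposition \ref{prop2}; the only genuinely new point is the cross-term, whose smallness is a direct consequence of the geometric fact \eqref{dist} that the $q$ great circles are mutually at positive distance.
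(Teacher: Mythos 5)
Your proposal is correct and follows essentially the same route as the paper: it uses the invariance of $U$ and of $L^{4/3}$-norms under the orthogonal transformations $\mathscr T_r$ to collapse the ``diagonal'' terms onto the estimates of Proposition~\ref{prop2}, and it exploits the uniform separation \eqref{dist} to decompose $\R^4$ into balls around the peaks of $\tilde V$, balls around the peaks of $\tilde V_r$, and an exterior region, obtaining the same $\mathcal O(\delta^2)$ bound for the cross term $\alpha\tilde V\sum_{r\ge2}\tilde V_r^2$. The only (minor) difference is that you spell out explicitly that $\delta^2\le|\beta|\delta$ follows from the constraint $\delta\le e^{-1/\sqrt{|\beta|}}$, a step the paper leaves implicit.
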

\begin{proof} 
 {Due to \eqref{dist}, it is easy to see that, proceeding as in \eqref{err03}, one has that 
$$\|\Delta \tilde V+\tilde V^3+\beta U^2\tilde V\|_{L^{\frac 43}(\R^4)}=\mathcal{O}(|\beta|\delta),$$}
so we only need to handle the new terms
$$\|\beta U\sum_{r=1}^q\tilde V_r^2\|_{L^\frac43(\mathbb R^4)}\quad\mbox{ and }\quad  \|\alpha \sum_{r=2}^q\tilde V_r^2 \tilde V\|_{L^\frac43(\R^4)}.$$
Since $\tilde V_r(x)=\tilde V(\mathscr T_r x)$, we have
$$\bigg(\int\limits_{\R^4}|  \beta U\sum_{r=1}^q\tilde V_r^2|^{\frac43}dx\bigg)^{\frac34}\leq C|\beta| \|U\tilde V^2\|_{L^{\frac43}(\R^4)}=\mathcal O(|\beta|\delta).$$
Let $\vartheta>0$ small enough. Using \eqref{dist} we obtain that
\begin{equation}\label{sigk12}
|\tilde V_r(x)|\leq C\sum_{i=1}^{{k}}\frac{\delta}{|x-\tilde\xi^r_i|^2}\leq C{ \delta }\quad x\in B( {\tilde\xi_j}, \vartheta), \quad r=2,...,q, \quad {j=1,..., k}.
\end{equation}
Consequently,  we obtain
\begin{equation}\label{term1}\begin{split}
\int\limits_{B( {\tilde\xi_j}, \vartheta)}|\tilde V_r^2\tilde V|^{\frac 43}&\leq C { \delta^{\frac 83}}\int\limits_{B( {\tilde\xi_j}, \vartheta)}U_{\delta, {\tilde\xi_j}}^{\frac 43}= \mathcal O(\delta^{4}),\quad  {j=1,..., k}.
\end{split}\end{equation}
Likewise,
\begin{equation}\label{sigk22}
|\tilde V(x)|\leq C\sum_{i=1}^k\frac{\delta}{|x-\tilde\xi_i|^2}\leq C\delta,\quad x\in B(\tilde\xi^r_j, \vartheta), \quad r=2,..., q, \quad j=1,...,k,
\end{equation}
and hence
\begin{equation}\label{term2}\begin{split}
\int\limits_{B(\tilde\xi^r_j, \vartheta)}|\tilde V_r^2\tilde V|^{\frac 43}&\leq C\delta^{\frac 43} \int\limits_{B(\tilde\xi^r_j, \vartheta)}\big(\sum_{i=1}^kU_{\delta,\tilde\xi^r_j}\big)^{\frac 83}=\mathcal O(\delta^{\frac 83}).
\end{split}\end{equation}
Finally, 
\begin{equation}\label{term3} 
\begin{aligned}
\int\limits_{\R^4\setminus (\cup_{r=1}^q\cup_{i=1}^kB(\tilde\xi^r_i, \vartheta))}\tilde V_r^{\frac{8}{3}}\tilde V^{\frac43}&\leq C\delta^{\frac43} \int\limits_{\R^4\setminus B( {\tilde\xi^2_1},{ \vartheta})}U_{\delta, {\tilde\xi^2_1}}^{\frac83}=\mathcal O(\delta^{4} ).
\end{aligned}
\end{equation}
Putting \eqref{term1}, \eqref{term2} and \eqref{term3} together we conclude that
$$\|\alpha \sum\limits_{r=2}^q\tilde V_r^2\tilde V\|_{L^{\frac 43}(\R^4)}=\mathcal O(\delta^2),$$
and the result follows.
\end{proof}
By Proposition \ref{prop4.1} and Proposition \ref{errormc}, a fixed point argument will allow us to solve the non-linear  system 
\begin{equation}\label{sysmul}
\tilde{\Pi}^{\perp}\bs{\tilde{\mathcal L}}(\phi, \psi)=\tilde{\Pi}^{\perp}[\bs{\tilde{\mathcal E}}+\bs{\tilde{\mathcal N}}(\phi, \psi)]\text{~ in~} \tilde X_1\times \tilde{K}^{\perp},
\end{equation}
which corresponds to \eqref{sysgen}.
\begin{proposition}\label{prop3.5}
Assume $k\geq 2$ even. There exists $\beta_0<0$ such that for any $\beta\in[\beta_0, 0)$ and $\delta\in(0,e^{-\frac{1}{\sqrt{|\beta|}}})$,   system \eqref{sysmul} has a unique solution $(\phi,\psi)\in \tilde X_1\times \tilde{K}^{\perp}$.  Furthermore, there exists a constant $C>0$ such that
\begin{equation}\label{apriorim3}
\|\phi\|+\|\psi\|\leq C|\beta|\delta .
\end{equation}
\end{proposition}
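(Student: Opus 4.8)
The plan is a Lyapunov-Schmidt contraction parallel to the two-component case, but preceded by one extra preliminary step forced by the coupling. I would first split $\bs{\tilde{\mathcal N}}=\bs{\tilde{\mathcal A}}+\bs{\tilde{\mathcal R}}$, where $\bs{\tilde{\mathcal A}}(\phi,\psi)$ collects exactly the terms that are linear in $(\phi,\psi)$ — in the first component $(-\Delta)^{-1}(\beta\sum_{r=1}^q\tilde V_r^2\phi+2\beta U\sum_{r=1}^q\tilde V_r\psi_r)$, in the second $(-\Delta)^{-1}(\beta U^2\psi+2\beta U\tilde V\phi+\alpha\sum_{r=2}^q\tilde V_r^2\psi+2\alpha\tilde V\sum_{r=2}^q\tilde V_r\psi_r)$ — and $\bs{\tilde{\mathcal R}}$ gathers the genuinely quadratic and cubic remainder. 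Then \eqref{sysmul} becomes $\bs{\tilde{\mathcal M}}(\phi,\psi)=\tilde\Pi^\perp[\bs{\tilde{\mathcal E}}+\bs{\tilde{\mathcal R}}(\phi,\psi)]$ with $\bs{\tilde{\mathcal M}}:=\tilde\Pi^\perp(\bs{\tilde{\mathcal L}}-\bs{\tilde{\mathcal A}})$. Moving $\bs{\tilde{\mathcal A}}$ to the left is not cosmetic: using the boundedness of $(-\Delta)^{-1}:L^{4/3}\to\mathcal D^{1,2}$, H\"older, the embedding $\mathcal D^{1,2}\hookrightarrow L^4$ and the separation \eqref{dist}, every piece of $\bs{\tilde{\mathcal A}}$ carrying a factor $\beta$ has operator norm $\mathcal O(|\beta|)$, and $2\alpha\tilde V\sum_{r\ge2}\tilde V_r\psi_r$, built from bubbles on polygons at mutual distance $\ge c>0$, has norm $\mathcal O(|\alpha|\delta)$; but $\psi\mapsto\alpha\sum_{r\ge2}(-\Delta)^{-1}(\tilde V_r^2\psi)$ has norm only $\mathcal O(|\alpha|)$, which is not small since no smallness is imposed on $\alpha$, so it must be inverted together with $\bs{\tilde{\mathcal L}}$.

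The second step is to prove that $\bs{\tilde{\mathcal M}}:\tilde X_1\times\tilde K^\perp\to\tilde X_1\times\tilde K^\perp$ is invertible with the norm of the inverse bounded independently of $\beta$ (for $\beta$ small). Since $\tilde V_r^2\in L^2(\mathbb R^4)$ for fixed $k$, $\bs{\tilde{\mathcal M}}$ is the identity minus a compact operator, so by Fredholm's alternative it suffices to prove $\|\bs{\tilde{\mathcal M}}(\phi,\psi)\|\ge c\|(\phi,\psi)\|$, which I would do by contradiction exactly as in Proposition \ref{prop4.1}. Steps 1 and 2 of that argument survive with only cosmetic changes: the potential felt by $\phi_n$ is still $3U^2$ up to terms $\mathcal O(|\beta|)$, and when $\psi_n$ is blown up at a vertex $\tilde\xi_{1n}$ of the first polygon the new term $\alpha\sum_{r\ge2}\tilde V_{r,n}^2\psi_n$ disappears in the limit because those bubbles are a fixed distance away, so the orthogonalities used to annihilate the weak limits are unchanged. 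The genuine difference is in Step 3: testing the $\psi_n$-equation with $\psi_n$ now produces, besides $3\int\tilde V_n^2\psi_n^2\to0$, the extra contributions $\alpha\sum_{r\ge2}\int\tilde V_{r,n}^2\psi_n^2$. Changing variables $x\mapsto{\mathscr T}_r^{-1}x$ and rescaling, each of these equals $\int U^2(\tilde\psi_n^r)^2(1+o(1))$, where $\tilde\psi_n^r(y):=\delta_n\psi_n(\delta_n y+\tilde\xi_{1n}^r)$ is the blow-up of $\psi_n$ at the vertex $\tilde\xi_{1n}^r$ of the $r$-th polygon; its weak limit $\tilde\psi^r$ solves $-\Delta\tilde\psi^r=\alpha U^2\tilde\psi^r$ in $\mathbb R^4$. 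One then shows $\tilde\psi^r=0$: for $\alpha$ outside the discrete set of values for which $-\Delta w=\alpha U^2w$ admits a nontrivial $\mathcal D^{1,2}$-solution this is immediate, and in general one exploits that $|\tilde\xi_{1n}^r|=\rho_n$ and $\delta_n^2+\rho_n^2=1$, so that the Kelvin invariance \eqref{kel} of $\psi_n$ together with \eqref{e611}-\eqref{e62} still forces the relevant moments of $\tilde\psi^r$ against the $Z^j$'s to vanish. Hence $\int\tilde V_{r,n}^2\psi_n^2\to0$, $\|\phi_n\|^2+\|\psi_n\|^2\to0$, and the contradiction follows; a Neumann series then absorbs the $\mathcal O(|\beta|)$ and $\mathcal O(|\alpha|\delta)$ remnants of $\bs{\tilde{\mathcal A}}$.

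The third and last step is the fixed point. Equation \eqref{sysmul} reads $(\phi,\psi)=\bs{\mathcal T}(\phi,\psi):=\bs{\tilde{\mathcal M}}^{-1}\tilde\Pi^\perp[\bs{\tilde{\mathcal E}}+\bs{\tilde{\mathcal R}}(\phi,\psi)]$. By Proposition \ref{errormc}, $\|\tilde\Pi^\perp\bs{\tilde{\mathcal E}}\|\le C|\beta|\delta$; every term of $\bs{\tilde{\mathcal R}}$ is quadratic or cubic (each quadratic term carrying either a factor $\beta$ or a bubble localized on a distant polygon), so on the ball $\mathcal B_R:=\{(\phi,\psi)\in\tilde X_1\times\tilde K^\perp:\ \|\phi\|+\|\psi\|\le R\}$ one gets $\|\bs{\tilde{\mathcal R}}(\phi,\psi)\|\le C(R^2+|\beta|R)$, with the analogous bound for differences. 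Taking $R:=C_0|\beta|\delta$ with $C_0$ large and then $|\beta|$ small, $\bs{\mathcal T}$ maps $\mathcal B_R$ into itself and is a contraction on it; the Banach fixed point theorem yields the unique solution, and the choice of $R$ is precisely \eqref{apriorim3}. I expect the main obstacle to be the second step — concretely, extending Step 3 of Proposition \ref{prop4.1} so as to absorb the non-small coupling term $\alpha\sum_{r\ge2}\tilde V_r^2\psi$, i.e.\ showing that the blow-ups of $\psi_n$ at the vertices of \emph{every} polygon (not merely the first) converge weakly to zero, which is where the symmetries \eqref{e611}-\eqref{e62} and the Kelvin invariance must be pushed harder than in the two-component case.
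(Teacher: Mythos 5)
Your proposal spends nearly all of its effort on a difficulty the paper does not explicitly treat, while silently skipping the point that the paper's proof is actually about. The entire content of the paper's argument for Proposition~\ref{prop3.5} is the verification that $\tilde{\mathcal E}_1,\tilde{\mathcal N}_1(\phi,\psi)\in\tilde X_1$ and $\tilde{\mathcal E}_2,\tilde{\mathcal N}_2(\phi,\psi)\in\tilde X_2$ whenever $(\phi,\psi)\in\tilde X_1\times\tilde K^\perp$: since $\tilde V_r(x)=\tilde V(\mathscr T_r x)$ and $\psi_r(x)=\psi(\mathscr T_r x)$ mix the rotations $\mathscr T_r$, it is genuinely non-obvious that $\sum_{r\ge2}\tilde V_r^2$, $\sum_{r\ge2}\tilde V_r\psi_r$, $\sum_{r\ge2}\psi_r^2$ and the rest of the coupled terms respect the invariances \eqref{24}, \eqref{e611}, \eqref{e62} (and for the first component also \eqref{e63} and \eqref{kel}). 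This check is what distinguishes the $m\ge3$ case from $m=2$, and the paper devotes several explicit computations to it. Your argument never verifies it; you declare the ball $\mathcal B_R\subset\tilde X_1\times\tilde K^\perp$ and assert that $\bs{\mathcal T}$ maps it into itself, but without showing that $\tilde\Pi^\perp[\bs{\tilde{\mathcal E}}+\bs{\tilde{\mathcal R}}(\phi,\psi)]$ actually lies in $\tilde X_1\times\tilde X_2$ the projection is meaningless and the fixed point, if produced at all, need not carry the symmetries required to descend to a solution of \eqref{1}. This is a genuine gap.

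On the positive side, your observation about the linear-in-$\psi$ term $\alpha\sum_{r\ge2}(-\Delta)^{-1}(\tilde V_r^2\psi)$ is sharp and legitimate. Its operator norm is indeed $\mathcal O(|\alpha|)$ (one has $\|\tilde V_r^2\psi\|_{L^{4/3}}\le\|\tilde V_r\|_{L^4}^2\|\psi\|_{L^4}$, and $\|\tilde V_r\|_{L^4}$ is not small in $\delta$), and no smallness is imposed on $\alpha$ in \eqref{beta}; the remark made in the $m=2$ case --- that all linear pieces of $\bs{\mathcal N}$ carry the small factor $\beta$ --- does not survive for $m\ge3$. The paper's one-line appeal to Banach's theorem glosses over this. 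Your remedy of absorbing that piece into the operator to be inverted is the natural move, but your sketch of the resulting invertibility argument is itself incomplete: the blow-up $\tilde\psi^r$ at a vertex of the $r$-th polygon solves $-\Delta\tilde\psi^r=\alpha U^2\tilde\psi^r$, whose kernel at the eigenvalues of $-\Delta w=\lambda U^2 w$ in $\mathcal D^{1,2}(\R^4)$ (i.e.\ $\lambda=1$ with eigenfunction $U$, $\lambda=3$ with eigenfunctions $Z^0,\dots,Z^4$, and higher) is not in general spanned by the $Z^j$'s, so ``forcing the moments against the $Z^j$'s to vanish'' only handles $\alpha=3$. For $\alpha=1$ or higher eigenvalues you would need a different annihilation argument, which you do not supply. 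In short: you have identified a real subtlety that the paper leaves implicit, but your fix is not complete, and you have omitted the step that constitutes the paper's actual proof.
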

\begin{proof}
It is sufficient to prove that  $$\tilde{\Pi}^{\perp}[\bs{\tilde{\mathcal E}}+\bs{\tilde{\mathcal N}}(\phi, \psi)] \in \tilde X_1\times \tilde{K}^{\perp}\quad \mbox{for every pair}\quad (\phi,\psi)\in \tilde X_1\times \tilde{K}^{\perp},$$
 which holds if and only if
$\tilde{\mathcal E_1},\, \tilde{\mathcal N_1}\in \tilde X_1, ~ \tilde{\mathcal E_2},\, \tilde{\mathcal N_2} \in \tilde X_2.$ Hence, we need to check carefully that all the symmetries in the definitions of the spaces are satisfied. Notice that
$$\tilde V_r(x)=\tilde V({\mathscr T}_rx)=\sum\limits_{\ell=1}^kU_{\delta,  {\tilde\xi_\ell}}({\mathscr T}_rx)=\sum\limits_{\ell=1}^kU_{\delta,   {\tilde\xi^r_\ell}}(x)$$
satisfies
$$\tilde V_r(  \mathscr R_kx)=\tilde V({\mathscr T}_r\circ \mathscr R_kx)=\tilde V({\mathscr T}_r x)=\tilde V_r(x).$$
Then, for even $k$, \begin{equation*}\tilde V_r(-x)=\tilde V_r(\mathscr R_k^{\frac{k}{2}}x)=\tilde V_r(x),\end{equation*}
and, as a consequence,
$$\begin{aligned}
\sum\limits_{r=2}^q\tilde V_r^2(x_3, x_4, x_1, x_2)=&\sum\limits_{r=2}^q\tilde V_r^2((\mathscr T_r^2)^{-1}x)=\sum\limits_{r=2}^q\tilde V^2(\mathscr T_r\circ(\mathscr T_r^2)^{-1}x)\\=&\sum\limits_{r=2}^q\tilde V^2(\mathscr T_r^{-1}x)
=\sum\limits_{r=2}^q\tilde V^2(-\mathscr T_r^{-1}x)\\
=&\sum\limits_{r=2}^q\tilde V^2(\mathscr T_{q+1}\circ\mathscr T_r^{-1}x)
=\sum\limits_{r=2}^q\tilde V_r^2(x_1, x_2, x_3, x_4).\end{aligned}$$
Analogously $\psi_r$ satisfies  \eqref{e62}, and then $\sum\limits_{r=2}^q\psi_r^2$ and $\sum\limits_{r=2}^q\tilde V_r\psi_r$ satisfy \eqref{e611}. 

 {Let us check that $\sum\limits_{r=2}^q\tilde V_r^2$ satisfies \eqref{24}. Indeed, since $\tilde{V}$ satisfies \eqref{24} and \eqref{e611},
\begin{align*}\sum\limits_{r=2}^q\tilde V_r^2&(x_1, x_2, x_3, x_4)=\sum\limits_{r=2}^q\tilde V^2(\mathscr T_r(x_1, x_2, x_3, x_4)  )\\
=&\sum\limits_{r=2}^q\tilde V^2\left(x_1\cos\frac{(r-1)\pi}{q}-x_2\sin\frac{(r-1)\pi}{q}, x_1\sin\frac{(r-1)\pi}{q}+x_2\cos\frac{(r-1)\pi}{q}, \right.\\&\quad \quad \quad \quad\left.x_3\cos\frac{(r-1)\pi}{q}+x_4\sin\frac{(r-1)\pi}{q}, x_3\sin\frac{(r-1)\pi}{q}-x_4\cos\frac{(r-1)\pi}{q}\right)\\
=&\sum\limits_{r=2}^q\tilde V^2\left(x_1\cos\frac{(r-1)\pi}{q}-x_2\sin\frac{(r-1)\pi}{q}, -x_1\sin\frac{(r-1)\pi}{q}-x_2\cos\frac{(r-1)\pi}{q}, \right.\\&\quad \quad \quad \quad\left.x_3\cos\frac{(r-1)\pi}{q}+x_4\sin\frac{(r-1)\pi}{q}, -x_3\sin\frac{(r-1)\pi}{q}+x_4\cos\frac{(r-1)\pi}{q}\right)\\
=&\sum\limits_{r=2}^q\tilde V^2\left(x_3\cos\frac{(r-1)\pi}{q}+x_4\sin\frac{(r-1)\pi}{q}, -x_3\sin\frac{(r-1)\pi}{q}+x_4\cos\frac{(r-1)\pi}{q},\right.\\
&\quad \quad \quad \quad \left.\cos\frac{(r-1)\pi}{q}x_1-\sin\frac{(r-1)\pi}{q}x_2, -\sin\frac{(r-1)\pi}{q}x_1-\cos\frac{(r-1)\pi}{q}x_2\right)\\
=&\sum\limits_{r=2}^q\tilde V_r(x_3, -x_4, x_1, -x_2)
=\sum\limits_{r=2}^q\tilde V_r(x_1, -x_2, x_3, -x_4),
\end{align*}
and similarly $\sum\limits_{r=2}^q\psi_r^2$ and $\sum\limits_{r=2}^q\tilde V_r\psi_r$ satisfy \eqref{24}. }

Applying the Banach Fixed Point Theorem can conclude the existence of a solution $(\phi,\psi)$ to \eqref{sysmul} satisfying \eqref{apriorim3}.
\end{proof}
To see that $(\phi,\psi)$ provided by Proposition \ref{prop3.5} also satisfy \eqref{genpi} we will find $\delta=\delta(\beta)$ such that 
\begin{equation*}   \tilde{c}_0(\delta):=\frac{\int\limits_{\R^4}\nabla\(\tilde{\mathcal E}_2+\tilde{\mathcal N}_2(\phi, \psi)-\tilde{\mathcal L}_2(\phi, \psi)\)\cdot \nabla \tilde Z dx}{\int\limits_{\R^4}| \nabla \tilde Z|^2dx}=0,\end{equation*}
in the spirit of Proposition \ref{c0-esti}.
Thanks to \eqref{dist}, \eqref{estB}, Proposition \ref{errormc} and \ref{prop3.5}, repeating the computions in Proposition \ref{c0-esti}, it is immediate to derive the following result:
 \begin{proposition}\label{c3-esti}
There exist $\tilde{\mathfrak a},\tilde{\mathfrak b}>0$ such that\begin{equation*}
\tilde{c}_0(\delta)=- {\tilde{\mathfrak a}}\delta^2(1+o(1))+ {\tilde{\mathfrak b}}{ \beta}\delta^2\ln(\delta )(1+o(1)),\end{equation*}
where $\beta<0$ and $|\beta|$ small enough.
\end{proposition}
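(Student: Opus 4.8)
The plan is to mirror the structure of the proof of Proposition~\ref{c0-esti} with the only new ingredient being the presence of the extra coupling terms carrying the coefficient $\alpha$ and the fact that the bubbles now sit on $q$ different great circles rather than on a single polygon. First I would use Proposition~\ref{prop3.5} and Proposition~\ref{errormc} exactly as in Section~\ref{s4}: write $\tilde{c}_0(\delta)$ as the quotient in the statement, estimate the denominator $\int_{\R^4}|\nabla\tilde Z|^2 = A_1 k + o(1)$ by repeating the computation in \eqref{tn} (now using \eqref{estB} and \eqref{dist} to control the cross terms $\int \nabla Z^0_{\delta,\tilde\xi_i}\nabla Z^0_{\delta,\tilde\xi_j}$ for $i\neq j$), and then focus on the numerator. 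Using \eqref{apriorim3} together with $|\tilde Z|\le \sum_{i=1}^k U_{\delta,\tilde\xi_i}$, the contributions of $\tilde{\mathcal N}_2(\phi,\psi)$ and $\tilde{\mathcal L}_2(\phi,\psi)$ are of order $\mathcal O(|\beta|^2\delta^2)$ and $\mathcal O(|\beta|\delta^3)$ respectively, exactly as in Proposition~\ref{c0-esti}, so that the numerator reduces to
\begin{equation*}
\int_{\R^4}\nabla\big(\tilde{\mathcal E}_2+\tilde{\mathcal N}_2(\phi,\psi)-\tilde{\mathcal L}_2(\phi,\psi)\big)\cdot\nabla\tilde Z
= \tilde I_1 + \tilde I_2 + \tilde I_3 + o(\delta^2),
\end{equation*}
where $\tilde I_1:=\int_{\R^4}(\tilde V^3+\Delta\tilde V)\tilde Z$, $\tilde I_2:=\beta\int_{\R^4}U^2\big(\sum_{\ell=1}^k U_{\delta,\tilde\xi_\ell}\big)\tilde Z$, and $\tilde I_3:=\alpha\int_{\R^4}\tilde V\big(\sum_{r=2}^q\tilde V_r^2\big)\tilde Z$ is the genuinely new term.

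Next I would handle $\tilde I_1$ and $\tilde I_2$. By the symmetry induced by $\mathscr R_k$ (i.e. \eqref{e62}) one has $\tilde I_1 = k\int_{\R^4}\big((\sum_\ell U_{\delta,\tilde\xi_\ell})^3-\sum_\ell U_{\delta,\tilde\xi_\ell}^3\big)Z^0_{\delta,\tilde\xi_1}$, and then the same Taylor expansion as in \eqref{sma02}, together with \eqref{estB}, gives $\tilde I_1 = -\tilde{\mathfrak c}_1\delta^2 + o(\delta^2)$ for some $\tilde{\mathfrak c}_1>0$ of the form $\tilde{\mathfrak c}_1 = A k^3\int_{\R^4}U^3$ (up to the $1/\sqrt2$ scaling in $\tilde\xi_\ell$, which only affects the constant). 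Likewise, splitting $U^2(\sum_\ell U_{\delta,\tilde\xi_\ell})\tilde Z$ into the diagonal piece $k\,\beta\int U^2 U_{\delta,\tilde\xi_1}Z^0_{\delta,\tilde\xi_1}$, which by the computation in \eqref{i2} equals $\tilde{\mathfrak c}_2\beta\delta^2\ln\delta + \mathcal O(\beta\delta^2)$, and the off-diagonal pieces $\beta\int U^2 U_{\delta,\tilde\xi_i}Z^0_{\delta,\tilde\xi_j}$ with $i\neq j$, which are $\mathcal O(|\beta|\delta^2)$ because of the uniform positive lower bound $|\tilde\xi_i-\tilde\xi_j|\ge c$ coming from \eqref{dist}, one obtains $\tilde I_2 = \tilde{\mathfrak c}_2\beta\delta^2\ln\delta + \mathcal O(|\beta|\delta^2)$ with $\tilde{\mathfrak c}_2>0$.

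The only new estimate is $\tilde I_3$, and here I expect the key point: by \eqref{dist} the bubbles $U_{\delta,\tilde\xi_j^r}$ for $r\ge 2$ are separated from the bubbles $U_{\delta,\tilde\xi_\ell}$ (and from $\tilde Z$, which is concentrated near the $\tilde\xi_\ell$) by a distance bounded below by a positive constant, so $|\tilde V_r(x)|\le C\delta$ on every ball $B(\tilde\xi_\ell,\vartheta)$ (this is precisely \eqref{sigk12}). Decomposing $\R^4$ as in \eqref{est02} into $\mathtt{Int}=\cup_\ell B(\tilde\xi_\ell,\vartheta)$ plus the exterior, on $\mathtt{Int}$ one gets $|\tilde V(\sum_{r\ge2}\tilde V_r^2)\tilde Z|\le C\delta^2\, U_{\delta,\tilde\xi_1}^2$ (up to symmetry), whose integral is $\mathcal O(\delta^2\cdot\|U_{\delta,\tilde\xi_1}^2\|_{L^1})=\mathcal O(\delta^4|\ln\delta|)$ after rescaling, and on $\mathtt{Ext}$ one uses $|\tilde Z|\le\sum_\ell U_{\delta,\tilde\xi_\ell}$, $\|U^2\sum U_{\delta,\tilde\xi_\ell}\|_{L^{4/3}}=\mathcal O(\delta)$-type bounds (as in \eqref{err2}) together with a H\"older estimate to bound the whole thing by $\mathcal O(\delta^2)$. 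Hence $\tilde I_3 = \mathcal O(\delta^2)$ — crucially \emph{not} of the borderline size $\delta^2\ln\delta$ — so it gets absorbed into the $\mathcal O(|\beta|\delta^2)$ (or even $o(\delta^2)$, depending on how sharply one wants to phrase it) error, and does not interfere with the two dominant terms. Collecting everything yields
\begin{equation*}
\tilde c_0(\delta) = -\tilde{\mathfrak a}\,\delta^2(1+o(1)) + \tilde{\mathfrak b}\,\beta\,\delta^2\ln\delta\,(1+o(1)),
\qquad \tilde{\mathfrak a} = \frac{\tilde{\mathfrak c}_1}{A_1 k}>0,\quad \tilde{\mathfrak b} = \frac{\tilde{\mathfrak c}_2}{A_1 k}>0,
\end{equation*}
which is the assertion. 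The main obstacle, such as it is, is bookkeeping: one must verify that the extra $\alpha$-term $\tilde I_3$ and all the new cross terms between distinct great circles are of strictly higher order than $\delta^2\ln\delta$, which is exactly what the uniform separation \eqref{dist} guarantees; everything else is a transcription of Proposition~\ref{c0-esti}.
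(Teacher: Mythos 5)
Your proposal follows the paper's proof in spirit and correctly identifies the three pieces $\tilde I_1,\tilde I_2,\tilde I_3$, but there is a genuine gap in the treatment of $\tilde I_3$.

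You conclude $\tilde I_3=\mathcal O(\delta^2)$ and assert that this is harmless because it is ``not of the borderline size $\delta^2\ln\delta$.'' That is the wrong criterion. The two main terms are $-\tilde{\mathfrak c}_1\delta^2$ and $\tilde{\mathfrak c}_2\beta\delta^2\ln\delta$, and $\tilde I_3$ carries the coefficient $\alpha$, \emph{not} $\beta$; no smallness on $\alpha$ is assumed (the paper even points out that $\alpha<0$ is unnecessary). So $\alpha\cdot\mathcal O(\delta^2)$ is the \emph{same order} as $-\tilde{\mathfrak c}_1\delta^2$, cannot be absorbed into $\mathcal O(|\beta|\delta^2)$, and would ruin the sign and size of the leading coefficient. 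What one actually needs, and what the paper proves, is $\tilde I_3=o(\delta^2)$ (in fact $\mathcal O(\delta^3)$), so that the $\alpha$-term is negligible compared with \emph{both} dominant terms.

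The fix is exactly the sharper bookkeeping that the paper carries out: decompose $\R^4$ into the balls $B(\tilde\xi_i,\vartheta)$, the balls $B(\tilde\xi_j^r,\vartheta)$ with $r\geq 2$ (which in your decomposition are silently lumped into $\mathtt{Ext}$, where $\tilde V_r$ is \emph{not} $\mathcal O(\delta)$), and the true exterior. On the first family \eqref{sigk12} gives $|\tilde V_r|\lesssim\delta$, on the second family \eqref{sigk22} gives $|\tilde V|\lesssim\delta$ together with $|\tilde Z|\lesssim\tilde V\lesssim\delta$, and both contributions are $\mathcal O(\delta^4|\ln\delta|)$; on the complement H\"older combined with \eqref{term3} gives $\mathcal O(\delta^3)$. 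Altogether $\tilde I_3=\mathcal O(\delta^3)=o(\delta^2)$, which is what makes the expansion close. Your estimates on $\mathtt{Int}$ are fine; the one on $\mathtt{Ext}$ is too coarse to deliver $o(\delta^2)$ as stated, and the $\mathcal O(\delta^2)$ you write down is not good enough.

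Everything else (denominator via \eqref{estB}, dominance of $\tilde{\mathcal E}_2$ over $\tilde{\mathcal N}_2$ and $\tilde{\mathcal L}_2$, $\tilde I_1$ and $\tilde I_2$) matches the paper.
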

\begin{proof} 
Using the definitions of ${\tilde{\mathcal E}}_2$ and  $\tilde{\mathcal{N}}_2(\phi,\psi)$, \eqref{dist}, \eqref{estB}, Proposition \ref{errormc} and Proposition \ref{prop3.5}, it can be checked that
\begin{align}
\int\limits_{\R^4}\nabla(\tilde{\mathcal E}_2+\tilde{\mathcal N}_2(\phi, \psi)-\tilde{\mathcal L}_2(\phi, \psi))\nabla \tilde Z
=&\int\limits_{\R^4}\nabla{\tilde{\mathcal E}_2}\nabla \tilde Z+o(\delta^2)\nonumber\\
=&-{\mathfrak c}_1\delta^2+{\mathfrak c}_2{ \beta}\delta^2\ln(\delta )+o(\delta^2 ).\nonumber\end{align}
where ${\mathfrak c}_1:=Ak^3\int\limits_{\mathbb R^4}U^3>0$, and ${\mathfrak c}_2= \frac14c_4^3k>0$ are the same as in \eqref{i1} and \eqref{i2}. Indeed, given the computations already performed in the proof of Proposition \ref{c0-esti}, to estimate the projection of the error term it is enough to compute 
\begin{align*}\int\limits_{\R^4}\alpha\sum\limits_{r=2}^q \tilde V_r^2 \tilde V\tilde Z .\end{align*}
Fix a small constant $\vartheta>0$. Using  \eqref{sigk12}  and noticing that $|\tilde Z|\leq C\tilde V$, it is simple to check that
\begin{equation*}
\begin{aligned}
\bigg|\int\limits_{B(\tilde\xi_i, \vartheta)}\tilde V_r^2 \tilde V\tilde Zdx\bigg|\leq&C\delta^2\int\limits_{B(0, \frac{\vartheta}{\delta })}\delta^2\frac{|1-|y|^2|}{(1+|y|^2)^3}dy=\mathcal O(\delta^4(1+|\ln(\delta )|), \quad i=1,...,k,
\end{aligned}
\end{equation*}
Likewise, by \eqref{sigk22}, 
\begin{equation*}
\begin{aligned}
\bigg|\int\limits_{B(\tilde\xi^r_j,\vartheta)}\tilde V_r^2 \tilde V\tilde Z dx\bigg|\leq&C\delta^2\int\limits_{B(\tilde\xi^r_j,\vartheta)} U_{\delta,\tilde\xi^r_j}^2 dx
\leq C\delta^4\int\limits_{B(0,\frac{\vartheta}{\delta })}\frac{1}{(1+|y|^2)^2}dy=\mathcal O(\delta^4  |\ln(\delta )|),
\end{aligned}
\end{equation*}
for every $r=2,...q$, $j=1,...,k$ and, using \eqref{term3}, it holds
\begin{equation*}
\begin{aligned}
&\bigg|\int\limits_{\R^4\setminus (\cup_{r=1}^q\cup_{i=1}^kB(\tilde\xi^r_i, \vartheta))}\sum\limits_{r=2}^q\tilde V_r^2 \tilde V\tilde Z dx\bigg|\leq C \bigg(~\int\limits_{\R^4\setminus (\cup_{i=1}^kB(\tilde\xi^r_i, \vartheta) ) }\tilde V_r^{\frac{8}{3}}\tilde V^{\frac43}\bigg)^{\frac34}\| \tilde Z\|_{L^4(\R^4)}
= \mathcal O(\delta^3).
\end{aligned}
\end{equation*}
Therefore, $$\int\limits_{\R^4}\alpha \sum\limits_{r=2}^q\tilde V_r^2 \tilde V\tilde Z dx= \mathcal O(\delta^3 ),$$
and the result follows.
\end{proof}

\begin{proof}[Proof of Theorem \ref{thm1.3}] We can also choose 
$$
\delta ={e^{-\tilde{d}_{\beta}}}
,\quad \mbox{ with }\quad \tilde{d}_\beta=\frac1{|\beta|}\frac{ {\tilde{\mathfrak a}}}{ {\tilde{\mathfrak b}}} +o(1)>0,
$$
such that $\tilde{c}_0(\delta) =0$. Thus, as a consequence of Proposition \ref{c3-esti}, the pair of solutions to \eqref{sysgen} provided by Proposition \ref{prop3.5} also solves \eqref{genpi}. This concludes the proof of the general case.
\end{proof}
\begin{remark}
In the particular case of three components, $m=3$, another construction can be made, starting from the ideas in \cite{MMW}. Indeed, consider $m=3$ in \eqref{1}, and assume 
\begin{equation}\label{betas}
{\beta_{13}=\beta_{23}= \beta_{31}=\beta_{32}=\beta<0,\quad\beta_{12}=\beta_{21}=\alpha.}
\end{equation}
Consider the system
\begin{equation}\label{2}\left\{\begin{aligned}
&-\Delta u=u^3+\beta u  v^2+\beta u{\tilde v}^2\quad \hbox{in}\ \mathbb R^4,\\
&-\Delta v=v^3+\beta vu^2+\alpha v {\tilde v}^2\quad \hbox{in}\ \mathbb R^4,\end{aligned}\right.
\end{equation}
where
\begin{equation*}\tilde v(x):=v(Tx) \quad \hbox{and}\quad T(x_1,x_2,x_3,x_4):=(x_3,x_4,x_1,x_2).
\end{equation*}
If $ u$ and $v$ solve  \eqref{2} with the symmetry
\begin{equation*}
u(x)=u(T x),
\end{equation*}
then the functions  $u_1:=v,$ $u_2(x):=u_1(Tx)=\tilde v$ and $u_3=u$
solve  \eqref{1} whenever \eqref{betas} holds. Hence it is enough with finding solutions to \eqref{2}.  This can be done considering the same approximation as in the case of $m=2$, but imposing different symmetries in the functional spaces. Namely, let us take
\begin{equation}\label{ans3}
u=U+\phi,\quad v=\underbrace{\sum\limits_{i=1}^k U_{\delta,\xi_i}}_{V}+\psi,\end{equation}
where $U_{\delta, \xi_i}$ and $\xi_i$,  $i=1,...,k$, are defined in \eqref{bubble} and \eqref{bas03}.
Consider the invariances
 \begin{equation}\label{sim12} \psi(x_1,x_2,x_3,x_4)= \psi(x_3,x_4,x_1, x_2)=\psi(Tx),\end{equation}
  \begin{equation}\label{sim10} \psi(x_1,x_2,x_3,x_4)=\psi(x_1,-x_2,x_3,x_4),\end{equation}
 \begin{equation}\label{sim11} \psi(x_1,x_2,x_3,x_4)= \psi(x_1,x_2,x_3,-x_4),\end{equation}
\begin{equation}\label{sim20} \psi( x_1,x_2,\Theta_k( x_3,x_4))=\psi(x_1,x_2,x_3,x_4),
 \end{equation}
 with $\Theta_k$ defined in \eqref{sim2}, and the associated spaces
\begin{equation*} \begin{split}
\hat{X}_1&:=\{\phi\in \mathcal D^{1,2}(\mathbb R^4): \ \phi\ \hbox{satisfies \eqref{sim12}, \eqref{sim10}, \eqref{sim2} and \eqref{kel}}\},\\
\hat{X}_2&:=\{\psi\in \mathcal D^{1,2}(\mathbb R^4): \ \psi\ \hbox{satisfies \eqref{sim10}, \eqref{sim11}, \eqref{sim20},  \eqref{sim2} and \eqref{kel}}\}.
\end{split}\end{equation*}
We ask the remainder terms $\phi$  and $\psi$ in \eqref{ans3} to
  belong to the spaces $\hat{X}_1$ and $\hat{X}_2$ respectively. Proceeding as in Section \ref{sec4}, one can find solutions to \eqref{1}, $m=3$, with the form
$$ u_1=V+\psi_1, \quad u_2= \hat V+\psi_2, \quad u_3=U+\phi$$
where $$\hat V(x):=V(Tx)=\sum\limits_{i=1}^k U_{\delta,\eta_i}(x),\quad  \psi_2(x)=\psi_1(Tx)=\psi(Tx),$$
and
$$\eta_i:=\rho\(0,0,\cos{2\pi(i-1)\over k},\sin{2\pi(i-1)\over k}\).$$
This construction is particular for the case of three components and cannot be extended to the general case.
\end{remark}

\end{document}